\numberwithin{equation}{section}
\newcommand{\dlambda}{\ensuremath{\lambda\mkern-9mu\lambda}}
\def\lf{\left\lfloor}   
\def\rf{\right\rfloor}
\newcommand{\flo}[1]{\lf #1 \rf}
\newcommand{\abs}[1]{\lvert #1 \rvert}
\newcommand{\norm}[1]{\lVert #1 \rVert}
\newcommand{\mb}[1]{\ensuremath{ \mathbb{#1}}}
\newcommand{\mc}[1]{\ensuremath{ \mathcal{#1}}}
\newcommand{\Pm}{\ensuremath{ \mathbb{P}}}
\newcommand{\dPm}{\ensuremath{ \operatorname{d}\!\mathbb{P}}}
\newcommand{\diff}{\ensuremath{ \operatorname{d}\!}}
\DeclareMathOperator{\Exp}{\mathbb{E}}
\DeclareMathOperator{\Var}{Var}
\DeclareMathOperator{\Cov}{Cov}
\newcommand{\ind}[1]{\ensuremath{ \mathbf{1} \! \left\lbrace #1 \right\rbrace  }}
\newcommand{\GEV}[1]{\ensuremath {\operatorname{GEV}\!\left(#1 \right)} }
\newcommand{\Norm}[1]{\ensuremath {\mc{N}\!\left(#1 \right)} }
\newcommand{\on}[1]{\ensuremath {\operatorname{#1} }}
\newcommand\restr[2]{{
  \left.\kern-\nulldelimiterspace 
  #1 
  \vphantom{\big|} 
  \right|_{#2} 
  }}
\newcommand\N{\mathbb{N}}
\newcommand\Z{\mathbb{Z}}
\newcommand\R{\mathbb{R}}
\newcommand{\wconv}{ \rightsquigarrow }
\newcommand{\dbl}{\ensuremath {\operatorname{db} }}
\newcommand{\sbl}{\ensuremath {\operatorname{sb} }}
\newcommand{\mbl}{\ensuremath {\operatorname{mb} }}
\newcommand{\ndb}{n_{\operatorname{db}}}
\newcommand{\nsb}{n_{\operatorname{sb}}}
\newcommand{\nmb}{n_{\operatorname{mb}}}
\newcommand{\Idb}{{I_{n}^{\dbl}}}
\newcommand{\Isb}{{I_{n}^{\sbl}}}
\newcommand{\Imb}{{I_{n}^{\mbl}}}
\newcommand{\Jdb}{{J_{n}^{\dbl}}}
\newcommand{\Jsb}{{J_{n}^{\sbl}}}
\newcommand{\Jmb}{{J_{n}^{\mbl}}}
\newcommand{\JJdb}{{\mc{J}_{n,\dbl}}}
\newcommand{\JJsb}{{\mc{J}_{n,\sbl}}}
\newcommand{\Unmb}{U_{n,r}^{\mbl}}
\newcommand{\Unsb}{U_{n,r}^{\sbl}}
\newcommand{\tilUnmb}{U_{n,r,\bm Z}^{\mbl}}
\newcommand{\e}{\ensuremath {\operatorname{e} }}
\theoremstyle{plain}
\newtheorem{theorem}{Theorem}[section]
\newtheorem{proposition}[theorem]{Proposition}
\newtheorem{lemma}[theorem]{Lemma}
\newtheorem{corollary}[theorem]{Corollary}
\theoremstyle{remark}
\newtheorem{condition}[theorem]{Condition}
\newtheorem{remark}[theorem]{Remark}
\newtheorem{example}[theorem]{Example}
\begin{document}

\begin{frontmatter}

\title{Limit theorems for non-degenerate U-statistics of block maxima for time series\support{Financial support by the German Research Foundation (DFG) is gratefully acknowledged (grant number 465665892).}}
\runtitle{U-statistics for block maxima}

\author{\fnms{Axel} \snm{Bücher}\corref{}\ead[label=e1]{axel.buecher@hhu.de}}
\and
\author{\fnms{Torben} \snm{Staud}\ead[label=e2]{torben.staud@hhu.de}}
\address{Heinrich-Heine Universität Düsseldorf \\
Mathematisches Institut \\
\printead{e1,e2}}

\runauthor{A. Bücher and T. Staud}

\begin{abstract}
The block maxima method is a classical and widely applied statistical method for time series extremes. It has recently been found that respective estimators whose asymptotics are driven by empirical means can be improved by using sliding rather than disjoint block maxima. Similar results are derived for general non-degenerate U-statistics of arbitrary order, in the multivariate time series case. Details are worked out for selected examples: the empirical variance, the probability weighted moment estimator and Kendall's tau statistic. The results are also extended to the case where the underlying sample is piecewise stationary. The finite-sample properties are illustrated by a Monte Carlo simulation study.
\end{abstract}

\begin{keyword}[class=MSC]
\kwd[Primary ]{62G32} 
\kwd{62E20} 
\kwd[; secondary ]{60G70}	
\end{keyword}

\begin{keyword}
\kwd{Extreme Value Copula}
\kwd{Generalized Extreme Value Distribution}
\kwd{Mixing Coefficient}
\kwd{Sliding Block Maxima}
\kwd{Stationary Time Series}
\end{keyword}

\end{frontmatter}

\section{Introduction}

A common target parameter in various domains of application is the distribution of componentwise yearly or seasonal maxima calculated from some underlying multivariate time series \citep{Kat02, BeiGoeSegTeu04}.
Statistical inference on the target distribution typically involves the assumption that the block maximum distribution is an extreme value distribution. The latter is justified by probabilistic results from extreme value theory: under broad conditions on the time series, the only possible limit distribution of affinely standardized componentwise maxima, as the block size goes to infinity, are extreme value distributions; see \cite{Lea83} for the univariate case and \cite{Hsi89} for multivariate extensions. 

The statistical literature on estimation and testing for extreme-value distributions is abundant, ranging from univariate estimators for the parameters of the generalized extreme value distribution \citep{PreWal80, HosWal85} to nonparametric estimators for extreme value copulas \citep{GenSeg09} and parametric estimators for max-stable process models \citep{PadRibSis10}.

Mathematically, statistical methods are typically validated under the additional assumption that the block maxima sample is serially independent. However, heuristically, both the independence assumption as well as the assumption that block maxima genuinely follow an extreme-value distribution should only be satisfied asymptotically, for the block size tending to infinity. \cite{Dom15, FerDeh15, DomFer19} have shown that specific univariate estimators are consistent and asymptotically normal in a sampling scheme where the block size tends to infinity, while maintaining an i.i.d.\ assumption on the underlying time series. For specific univariate and multivariate estimators, \cite{BucSeg14, BucSeg18a} also relax the i.i.d.\ assumption, and allow for more general stationary time series satisfying certain mixing conditions. 
It has moreover been found that estimators based on block maxima may be made more efficient by considering sliding rather than disjoint block maxima, both in the univariate \citep{RobSegFer09, BucSeg18-sl, BucZan23} and in the multivariate case \citep{ZouVolBuc21}.

In general, the field of asymptotic statistics is based on a number of fundamental theoretical tools like the central limit theorem, the delta-method, the empirical process or the concept of U-statistics \citep{Van98}. 
While the efficiency gain of the sliding block maxima method over its disjoint blocks counterpart mentioned in the previous paragraph has been established for classical empirical means as well as empirical (copula) processes, it has not been studied yet for the case of U-statistics. The present paper aims at filling this gap by studying non-degenerate U-statistics of disjoint and sliding block maxima samples. The topic is related but different to \cite{Oor23}, who study U-statistics in the univariate case where the kernel of order $m$ is evaluated blockwise in the largest $m$ order statistics of a (disjoint) block of observations.

In general, U-statistics comprise a number of important estimators like the empirical covariance, Wilcoxon's statistic or Kendall's tau statistic. A prominent example from extremes is the probability weighted moment estimator \citep{HosWal85}. Mathematical theory for i.i.d.\ random variables dates back to \cite{hoeffdingzl}; since then, several favorable statistical properties have been demonstrated \citep[Chapter 12]{Van98}. Asymptotic results on U-statistics have also been generalized to the time series context \citep{sen72, Yoshi76, DehWen10a}; unbiasedness then only holds asymptotically. 

The main result of this paper is Theorem~\ref{thm:U_conv}, where we establish a central limit theorem on the estimation error of U-statistics for multivariate disjoint and sliding block maxima under mild assumptions on the serial dependence and the kernel function. 
As in the papers mentioned before, the disjoint blocks version is found to be at most as efficient as the sliding blocks version. 
In selected examples, it is in fact found to be less efficient. 
The results are extended to a sampling scheme involving piecewise stationarities which is used to capture certain applications from environmental extremes where maxima are calculated based on, for example, summer days \citep{BucZan23}. 
The model is interesting mathematically, because unlike the disjoint block maxima sample the sliding block maxima sample is not stationary anymore.

The remaining parts of this paper are organized as follows: the underlying model assumptions and the definition of respective U-Statistics for disjoint and sliding block maxima are presented in Section \ref{sec:u-stat}. The main limit results are discussed in \ref{sec:asy}, and illustrated for three selected examples in Section~\ref{sec:examples}. Extensions to piecewise stationary time series are presented in Section~\ref{sec:ext}.  Results from a Monte Carlo simulation study illustrate the behavior in finite-sample situations (Section~\ref{sec:sim_study}). Finally, the proofs are deferred to Sections~\ref{sec:proofs}. Additional limit results under strong mixing assumptions and lengthy calculations of some asymptotic variances are postponed to a supplement.

\section{U-statistics of block maxima}
\label{sec:u-stat}

Recall the Generalized Extreme Value (GEV) distribution with parameters $\mu$ (location), $\sigma$ (scale) and $\gamma$ (shape), defined by its cumulative distribution 
function 
\[
G_{(\mu, \sigma, \gamma)}(x) := \exp\Big[-\Big\{1+\gamma\Big(\frac{x-\mu}{\sigma}\Big)\Big\}^{-\frac{1}{\gamma}}\Big] , \qquad 1+\gamma\,  \frac{x-\mu}{\sigma} >0.
\]
If $\eta=(\mu,\sigma,\gamma)' = (0,1,\gamma)'$, we will use the abbreviation $G_{(0,1,\gamma)} = G_\gamma$. The support of $G_\gamma$ is denoted by $S_\gamma=\{x \in \R: 1+\gamma x>0\}$.

An extension of the classical extremal types theorem to strictly stationary time series \citep{Lea83} implies that, under suitable broad conditions, affinely standardized maxima extracted from a stationary time series converge to the GEV-distribution. This was generalized to the multivariate case in \cite{Hsi89}, where the marginals are necessarily GEV-distributed. We make this an assumption, and additionally require the scaling sequences to exhibit some common regularity inspired by the max-domain of attraction condition in the i.i.d.\ case \citep{deextreme}. 

\begin{condition}[Multivariate Max-domain of attraction] \label{cond:mda}
Let $(\bm X_t)_{t\in\Z}$ denote a stationary time series in $\R^d$ with continuous margins. There exist sequences $(\bm a_r)_r=(a_{r}^{(1)}, \dots, a_{r}^{(d)}))_{r} \subset (0,\infty)^d, (\bm b_r)_r = ( b_{r}^{(1)}, \dots, b_{r}^{(d)})_r \subset \R^d$  and $\bm \gamma = (\gamma^{(1)}, \dots, \gamma^{(d)}) \in\R$, such that, for any $s>0$ and $j\in\{1, \dots, d\}$,
\begin{align}\label{eq:rvscale}
	\lim\limits_{r \to \infty}\frac{a_{\lfloor rs \rfloor}^{(j)}}{a_{r}^{(j)}}  = s^{\gamma^{(j)}}
	\qquad 
	\lim\limits_{r \to \infty} \frac{b_{\lfloor rs \rfloor }^{(j)} -b_{r}^{(j)}}{a_{r}^{(j)}} = \frac{s^{\gamma^{(j)}} -1}{\gamma^{(j)}}, 
	\end{align} 
	where the second limit  is interpreted as $\log(s)$ if $\gamma^{(j)}=0$. Moreover,   	
	for $r \to \infty$, 			
	\begin{align}\label{eq:firstorder} 
	\bm Z_r = (Z_{r}^{(1)}, \dots, Z_{r}^{(d)})  \ {\longrightarrow}_d \ \bm Z \sim G
 \end{align}
 where $G$ denotes a $d$-variate extreme-value distribution with marginal c.d.f.s $G_{\gamma^{(1)}}, \dots, G_{\gamma^{(d)}}$ and where
 \[
 Z_{r}^{(j)} = \frac{ \max(X_1^{(j)}, \ldots, X_r^{(j)}) - b_{r}^{(j)}}{a_{r}^{(j)}}, \quad j \in \{1, \dots, d\}.
 \]
\end{condition}

In the case $d\ge 2$,
let $C$ denote the unique extreme value copula associated with $\bm{Z}$. As is well-known, 
$C$ can be written as
\begin{align} \label{eq:evc}
C(\bm u) = \exp\left( -L(-\log u^{(1)}, \ldots, -\log u^{(d)}) \right)
\end{align}
for some \textit{stable dependence function} $L \colon [0,\infty]^d \to [0,\infty]$, which satisfies 
\smallskip
\begin{compactenum}
\item[(L1)] $L$ is homogeneous: $L(s \bm x ) = sL(\bm x)$ for all $s > 0$ and all $\bm x \in [0,\infty]^d$;
\item[(L2)] $L(\bm e_j) = 1$ for $j=1,\ldots,d,$ where $\bm e_j$ denotes the $j$-th unit vector in $\R^d;$
\item[(L3)] $\max\left(x^{(1)}, \ldots, x^{(d)}\right) \leq L(\bm{x}) \leq x^{(1)} + \ldots + x^{(d)}$ for all $\bm{x} \in [0, \infty]^d;$
\item[(L4)] $L$ is convex;
\end{compactenum}
\smallskip
see, e.g., \cite{GudSeg10, Pic81}.

Note that (\ref{eq:rvscale}) and (\ref{eq:firstorder}) may for instance be deduced from Leadbetter's $D(u_n)$ condition, a domain-of-attraction condition on the associated i.i.d.\ sequence with stationary distribution equal to that of $\bm X_0$ and a weak requirement on the convergence of the c.d.f.\ of $\bm Z_r,$ see Theorem 10.22 in \cite{BeiGoeSegTeu04}. 

From now on, we assume to  observe $\bm X_1,\ldots, \bm X_n$, an excerpt from a strictly stationary $d$-dimensional time series $(\bm X_t)_t$ satisfying Condition \ref{cond:mda} (some generalizations will be discussed in Section~\ref{sec:ext}).
For block size parameter $r \ll n$, define componentwise block maxima of size $r$ by
\begin{align*}
\bm{M}_{r,i} := \Big( M_{r,i}^{(1)}, \ldots, M_{r,i}^{(d)} \Big), \qquad 
&M_{r,i}^{(j)} := \max \Big\{ X^{(j)}_{i}, \ldots, X^{(j)}_{i+r-1} \Big\},
\end{align*}
where $ 
i\in\{1, \dots, {n-r+1}\}$ denotes the first observation within each block.

The traditional block maxima method is based on applying statistical methods to the sample of disjoint block maxima. The latter is given by $\mc{M}_{n,r}^{(\dbl)} = \big( \bm M_{r,i} \colon i \in \Idb \big)$, where 
$\Idb := \lbrace (i-1)r+1 \colon 1 \leq i \leq m \rbrace$ with $m = m_n := \flo{n/r}$. Note that $m$ is the number of disjoint blocks of size $r$ that fit into the sampling period.
Under Condition~\ref{cond:mda}, the sample of disjoint block maxima is stationary and approximately follows the multivariate extreme value distribution $G$.

Instead of partitioning the observation period into disjoints blocks, one may alternatively  slide the blocks through the observation period, thereby taking successive maxima of only one to the right instead of $r$. The resulting sliding block maxima sample is given by $\mc{M}_{n,r}^{(\sbl)} = \big( \bm M_{r,i} \colon i \in \Isb \big)$, where $\Isb := \lbrace 1, \ldots, n-r+1 \rbrace$. Under Condition~\ref{cond:mda}, the sliding block maxima sample is stationary as well, with approximate c.d.f.\ $G$. Hence, statistical methods that are based on estimating unknown expectations by empirical means are meaningful.

The case of classical empirical means has been treated in varying generality in \cite{BucSeg18-sl, ZouVolBuc21, BucZan23}. 
It was found that estimators based on sliding block maxima are typically more efficient than their disjoint block maxima counterparts, despite the fact that the sample $\mc{M}_{n,r}^{(\sbl)}$ is heavily dependent over time, even if $\bm (\bm X_t)_t$ is an i.i.d.\ sequence. 
In this paper we generalize these results to U-statistics of order $p \in \N$, with $p=1$ corresponding to classical empirical means. 

More precisely, let $h \colon (\R^{d})^p \to \R$ be a known symmetric measurable function of $p$ $d$-dimensional input variables, subsequently referred to as a kernel of order $p$. 
The main objects of interest in this paper are the associated U-statistic of order $p$ given by, for $\on{mb} \in \{ \on{db}, \on{sb}\}$,
\begin{equation}
\label{eq:u_stat}
U_{n,r}^{\mbl} 
:= 
\Unmb(h) 
:= 
\binom{\nmb}{p}^{-1} \sum_{(i_1,\dots, i_p) \in \Jmb} h\!\left( \bm M_{r,i_1}, \dots, \bm M_{r,{i_p}}\right),
\end{equation}
where $\nmb = |\Imb|$ denotes the length of the block maxima sample (i.e., $\ndb = m$ if $\on{mb} = \on{db}$ and  $\nsb = n-r+1$ if $\on{mb} = \on{sb}$) and where 
\[
\Jmb := \Jmb(p) :=\lbrace (i_1, \dots, i_p) \in (\Imb)^p \colon i_1< \dots < i_p \rbrace.
\]
A standard heuristic argument suggests that, for the majority of summands 
in \eqref{eq:u_stat}, the underlying block maxima can be considered as asymptotically independent.
As a consequence, $U_n^{\mbl}$ should be considered as an estimator for
\begin{align}
\theta_r = \theta_r(h)
&:= \nonumber 
\int \dots \int h(\bm x_1, \dots, \bm x_p) \diff \Pm_{\bm M_{r,1}}(x_1) \dots \diff \Pm_{\bm M_{r,1}}(x_p) \\
&= \label{eq:thetar}
\Exp[h(\tilde{\bm M}_{r,1}^{(1)}, \dots, \tilde{\bm M}_{r,1}^{(p)})],
\end{align}
where $\tilde{\bm M}_{r,1}^{(1)}, \dots, \tilde{\bm M}_{r,1}^{(p)}$ are i.i.d.\ copies of $\bm M_{r,1}$.
We are interested in obtaining asymptotic results for the estimation error
\[
\Unmb(h)- \theta_r(h)
\]
in an asymptotic framework where $r=r_n \to \infty$ such that $r=o(n)$ for $n\to\infty$.

\section{Limit theorems for U-statistics of block maxima} \label{sec:asy}

We start by introducing further conditions and notations.
First, throughout the proofs we will use traditional blocking techniques relying on mixing coefficients. The latter are well-known to control the serial dependence of the underlying time series. A similar condition has been imposed in \cite{BucSeg14}, among others.

\begin{condition}[Block size and serial dependence]\label{cond:ser_dep}
For the block size sequence $(r_n)_n$ it holds that, as $n\to\infty$,
\begin{compactenum}
    \item[(a)] $r_n \to \infty$ and $r_n = o(n).$
    \item[(b)] There exists a sequence $(\ell_n)_n \subset \N$ such that $\ell_n \rightarrow \infty $, $\ell_n = o(r_n)$ and $ \frac{r_n}{\ell_n}  \alpha(\ell_n) = o(1)$ and $\frac{n}{r_n} \alpha(\ell_n)=o(1)$.
    \item[(c)] $\bigl(\frac{n}{r_n}\bigr)^{1+\omega} \beta(r_n) = o(1) $ for some $\omega > 0$.
\end{compactenum}
Here, $\alpha$ and $\beta$ denote the alpha- and beta-mixing coefficients, see \cite{Bra05} for exact definitions and basic properties. Subsequently, we often write $r=r_n$ and $\ell=\ell_n$.
\end{condition}

The expectation and higher order moments of $h(\bm M_{r,i_1}, \dots, \bm M_{r,i_p})$ in (\ref{eq:u_stat}) will be controlled by uniform integrablity and by relying on the convergence of rescaled block maxima from Condition \ref{cond:mda}. For that purpose, we need the kernel function $h$ to behave well under location-scale transformations; see also \cite{Seg01}, Chapter 5, and \cite{Oor23} for a similar, slightly more restrictive assumption.

\begin{condition}[Location-scale property of the kernel function] \label{cond:ker_traf}
There exist functions $f \colon (\R^{d})^p \to (0,\infty), \, \ell\colon (\R^{d})^p \to \R$ such that, for all $ \bm x_{1}, \ldots, \bm x_{p}, \bm b \in \R^d$ and  $\bm{a} \in {(0, \infty)}^d$,
\begin{equation} \label{eq:ker_traf}
h\!\left( \frac{\bm x_{1}- \bm b}{ \bm{a}}, \ldots, \frac{\bm  x_{p}- \bm b}{ \bm{a}} \right) = \frac{h(\bm x_{1}, \ldots, \bm x_{p})}{f( \bm{a}, \bm b)} - \ell(\bm{a}, \bm b),
\end{equation}
\end{condition}
where ${\bm x}/{\bm y} := \big( x^{(1)}/y^{(1)},\ldots,x^{(d)}/ y^{(d)} \big)$ for $\bm x \in \R^d, \bm y \in (0,\infty)^d.$

\begin{example}\label{ex:kernels}
Condition \ref{cond:ker_traf} is met for the following kernel functions. Note that the kernels in (5) to (7) may be used to construct tests for stochastic independence; see, for instance, \cite{leung_drton_kernels}. In the current case, this corresponds to testing asymptotic independence of the coordinates of $\bm X_1$.
\begin{compactenum}[(1)]
\item The mean kernel: $h(x) = x$ with $d = 1, \, p = 1, \,f(a,b) = a, \,\ell(a,b) = b.$ 
\item The variance kernel: $h(x,y) = (x-y)^2/2$  with $d = 1, \, p = 2, \, f(a,b) = a^2,\, \ell \equiv 0.$
\item Gini's mean difference kernel: $h(x,y) = |x-y|/2$ with $d=1, p = 2, f(a,b) = a, \ell \equiv 0.$
\item The modified probability weighted moment kernel of degree $k \in \N$ (see also Section~\ref{subsec:asy_pwm}):
$h_k(x_1,\ldots,x_{k}) = \max \{x_1, \dots, x_k\} / k$ with $d = 1, \, p = k, \,f(a,b) = a,\, l(a,b) = \frac1k\cdot \frac{b}a$. 
\item Kendall's $\tau$ kernel: $h(\bm x, \bm y) = \ind{(x^{(1)} - y^{(1)} ) 
(x^{(2)} - y^{(2)}) > 0}$ with $d = 2,\, p = 2,\, f\equiv 1,\, \ell \equiv 0.$ 
\item Spearman's $\rho$ kernel: $h(\bm{x}_1, \bm{x}_2, \bm{x}_3) = 2^{-1} \sum_{\pi \in \on{S}_3} \on{sgn}(x_{\pi_1}^{(1)} - x_{\pi_2}^{(1)}) \on{sgn}(x_{\pi_1}^{(2)} - x_{\pi_3}^{(2)})$  with $d=2, \, p = 3,\, f \equiv 1, \ell \equiv 0$ and where $\on{S}_n$ denotes the symmetric group of order $n$.
\item Hoeffding's $D$ kernel and Bergsma and Dassio's $t^\ast$ kernel: we refer to \cite{leung_drton_kernels} for the kernel definition, which satisfy $d=2, f\equiv1, \ell \equiv 0$ and $p=4$ and $p=5$, respectively. 
\end{compactenum}
\end{example}

From now on, for the ease of notation, we only consider the case $p = 2$ (see also \cite{DehWen10a}, among others). For $i \in \{1, \dots, n-r+1\}$, let
\[
\bm Z_{r,i} := \Big(Z_{r,i}^{(1)}, \ldots, Z_{r,i}^{(d)}\Big), \qquad
Z_{r,i}^{(j)} := (M_{r,i}^{(j)} - b_r^{(j)})/ a_r^{(j)}.
\]
with $\bm a_r$ and $\bm b_r$ from Condition \ref{cond:mda}. Note that $(\bm Z_{r,i})_i$ is stationary with $\bm Z_{r,1} \wconv G$ as $n\to\infty$. Further, 
under Condition \ref{cond:ker_traf} one has 
\begin{equation}
\label{eq:hZ_to_hM}
h\left(\bm M_{r,i}, \bm M_{r,j}\right)
=
f(\bm a_r, \bm b_r) \big\{ h\left(\bm Z_{r,i}, \bm Z_{r,j}\right) + \ell(\bm a_r, \bm b_r) \big\},   
\end{equation}
which will ultimately allow to deduce asymptotic results on $\Unmb$ defined in \eqref{eq:u_stat} from respective results on
\begin{equation}\label{eq:u_stat_resc}
U_{n, r, Z}^{\mbl} := {U}_{n,r,Z}^{\mbl}(h) := 
\binom{\nmb}{2}^{-1} \sum_{(i,j) \in \Jmb} h\!\left( \bm Z_{r,i}, \bm Z_{r,j} \right).
\end{equation}
Heuristically, the expectation of $U_{n, Z}^{\mbl}$ is close to 
\begin{align} \label{eq:varthetar}
\vartheta_r = \vartheta_r(h)
= \int \int h(\bm x,\bm y) \diff \Pm_{\bm Z_{r,1}}(\bm x) \diff \Pm_{\bm Z_{r,1}}(\bm y)
=\Exp[h(\bm Z_{r,1}, \tilde{\bm Z}_{r,1})]
\end{align}
with $\tilde{\bm Z}_{r,1}$ an independent copy of $\bm Z_{r,1}$. The sequence $\vartheta_r$ in turn converges to 
\begin{align}\label{eq:vartheta}
\vartheta := \Exp[h(\bm Z, \tilde{\bm Z})],
\end{align}
under suitable integrability assumptions; here $\bm Z, \tilde{\bm Z} \sim G$ are independent (see Lemma~\ref{lem:varthetar} below).
The necessary integrability condition, which will also ensure convergence of higher order moments, is as follows.

\begin{condition}[Asymptotic integrability] \label{cond:h_int_std}
There exists a $\nu > 2/\omega$ with $\omega$ from  Condition~\ref{cond:ser_dep} such that:
\begin{compactenum}
    \item[(a)] $\limsup_{r\to \infty} \int \int \abs{h(\bm{x}, \bm{y})}^{2+\nu} \on{d}\!\mathbb{P}_{\bm{Z}_{r,1}}(\bm{x}) \on{d}\!\mathbb{P}_{\bm{Z}_{r,1}}(\bm{y}) < \infty,$
    \item[(b)] $\limsup_{r\to \infty} \sup_{s \in \N} \int \abs{h(\bm{x}, \bm{y})}^{2+\nu} \on{d}\!\mathbb{P}_{(\bm{Z}_{r,1}, \bm{Z}_{r,1+s})}(\bm x, \bm{y}) < \infty$.
\end{compactenum}    
\end{condition}
\noindent Note that the two moment assumptions may be understood as an asymptotic formulation of uniform moments as used in \cite{DehWen10b}.
In many situations, the conditions are easily satisfied, see, e.g., Section \ref{sec:examples}. Finally, for kernels of higher order than $p=2$, more complicated versions of this condition will be needed, see \cite{Yoshi76}.

Additional notation is needed to formulate the asymptotic limit results for $U_{n,r}^{\mbl}$. Recall $G$ from Condition~\ref{cond:mda}. Let $L$ denote the stable tail dependence function of $G$ if $d\ge 2$, and the identity on $[0,\infty]$ if $d=1$.
For $\bm u, \bm v \in [0,1]^d$ and $\xi \ge 0$, let
\begin{align} \label{eq:cxi}
C_{\xi}(\bm u, \bm v) = \exp \Big[ -L_\xi \Big( -\log(u^{(1)}),\ldots, -\log(u^{(d)}), -\log(v^{(1)}), \ldots, -\log(v^{(d)}) \Big) \Big],
\end{align}
where, for $\bm x, \bm y \in [0,\infty]^d$, 
\begin{multline} \label{eq:lxi}
L_\xi(\bm{x},\bm{y})  
:=  (\xi \wedge 1) \cdot \Big \lbrace L(x^{(1)},\ldots,x^{(d)}) + L\left( y^{(1)},\ldots,y^{(d)} \right) \Big \rbrace \\
+ (1- (\xi \wedge 1)) \cdot L \left( x^{(1)} \vee y^{(1)},\ldots, x^{(d)} \vee y^{(d)} \right).
\end{multline}
As shown in Lemma~\ref{lem:overlap_wconv} below, $C_\xi$ defines a $2d$-variate extreme-value copula with stable tail dependence function $L_\xi$. Let $G_\xi$
 denote the $2d$-variate extreme value distribution with copula $C_\xi$ and margins $G_{\gamma^{(1)}}, \dots, G_{\gamma^{(d)}}, G_{\gamma^{(1)}}, \dots, G_{\gamma^{(d)}}$, i.e.,
\begin{align} \label{eq:gxi}
G_\xi(\bm x, \bm y) = C_\xi\Big\{G_{\gamma^{(1)}}(x^{(1)}), \dots, G_{\gamma^{(d)}}(x^{(d)}), G_{\gamma^{(1)}}(y^{(1)}), \dots, G_{\gamma^{(d)}}(y^{(d)}) \Big\}
\end{align}
for $\bm x, \bm y \in \R^d$.
Note that $G_{\xi}(\bm{x},\bm{y}) = G(\bm{x})G(\bm{y})$ for $\xi > 1$.
Further, $G_\xi$ is the multivariate analogue of $G_{\alpha_0, \xi}$ in  Formula (5.1) in \cite{BucSeg18-sl} and, in the case $d=1$, also appeared in Formula (13) in \cite{BucZan23}.

Finally, for 
$(\bm Z_{1,\xi}, \bm Z_{2,\xi}) \sim G_\xi$, let
\begin{align}\label{eq:as_vars}
\sigma^2_{\dbl} := 4\Var(h_1(\bm Z)), \qquad 
\sigma^2_{\sbl} := 8 \int_0^1 \Cov\big(h_1(\bm Z_{1,\xi}), h_1(\bm Z_{ 2,\xi})\big) \diff \xi .
\end{align}
where, 
\begin{align}\label{eq:h_1}
h_1 \colon \R^d \to \R, \, h_1(\bm z) &:= \Exp[h(\bm z, {\bm Z})]  - \vartheta
\end{align}
with $\bm Z \sim G$ and $\vartheta$ from \eqref{eq:vartheta}.
The following result is the main result of this paper.

\begin{theorem}\label{thm:U_conv}
Suppose Conditions \ref{cond:ser_dep}, \ref{cond:ker_traf} and \ref{cond:h_int_std} are met. Furthermore let $h$ be $\dlambda^{2d}$-a.e.\ continuous and bounded on compact sets. Then, for $\on{mb} \in \lbrace \on{db}, \on{sb} \rbrace$,  
\begin{equation*}
\frac{\sqrt{m}}{ f(\bm{a}_r, \bm{b}_r)} \cdot ( \Unmb  - \theta_r) \wconv 
\Norm{0, \sigma_{\on{mb}}^2},
\end{equation*}
with $\theta_r$ from \eqref{eq:thetar} and $\sigma^2_{\mbl}$ from \eqref{eq:as_vars}. Moreover, 
$\sigma_{\on{sb}}^2 \leq  \sigma_{\on{db}}^2$.
\end{theorem}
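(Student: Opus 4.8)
\noindent\emph{Proof proposal.}
The plan is to reduce everything to the standardized block maxima and then to treat the limit law and the variance inequality separately. Applying the location-scale identity \eqref{eq:hZ_to_hM} to the summands of $\Unmb$, and applying \eqref{eq:ker_traf} also to the i.i.d.\ copies defining $\theta_r$ in \eqref{eq:thetar}, the common factor $\ell(\bm a_r,\bm b_r)$ cancels and one obtains $\Unmb-\theta_r=f(\bm a_r,\bm b_r)\,(U_{n,r,Z}^{\mbl}-\vartheta_r)$, so that $\tfrac{\sqrt m}{f(\bm a_r,\bm b_r)}(\Unmb-\theta_r)=\sqrt m\,(U_{n,r,Z}^{\mbl}-\vartheta_r)$ with $U_{n,r,Z}^{\mbl}$ from \eqref{eq:u_stat_resc}. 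It therefore suffices to prove a central limit theorem for $\sqrt m\,(U_{n,r,Z}^{\mbl}-\vartheta_r)$. For this I would perform a Hoeffding/H\'ajek decomposition relative to the \emph{marginal} law of $\bm Z_{r,1}$: with $g_r(\bm z):=\Exp[h(\bm z,\tilde{\bm Z}_{r,1})]-\vartheta_r$ for an independent copy $\tilde{\bm Z}_{r,1}$, write $U_{n,r,Z}^{\mbl}-\vartheta_r=\tfrac{2}{\nmb}\sum_{i\in\Imb} g_r(\bm Z_{r,i})+R_n^{\mbl}$, where $R_n^{\mbl}$ is the associated second-order U-statistic with kernel degenerate under the marginal law. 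The two tasks are to show $\sqrt m\,R_n^{\mbl}=o_{\Pm}(1)$ and to establish the CLT for the linear part.

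For the remainder I would bound $\Exp[(\sqrt m\,R_n^{\mbl})^2]$ by moment inequalities for degenerate U-statistics of absolutely regular sequences, splitting the index pairs by their distance and controlling long-range pairs through $\beta(r_n)$ (Condition~\ref{cond:ser_dep}(c)) together with the $(2+\nu)$-moments of Condition~\ref{cond:h_int_std}; the heavy overlap in the sliding case is harmless because the kernel is centered. For the linear part, the disjoint case rests on the $m$ maxima over disjoint windows, which are asymptotically independent by $\beta$-mixing; a Lindeberg CLT for triangular arrays then applies, and, using $\bm Z_{r,1}\wconv G$, the $\dlambda^{2d}$-a.e.\ continuity of $h$, and uniform integrability (via Condition~\ref{cond:h_int_std} and Lemma~\ref{lem:varthetar}), the variance converges to $4\Var(g_r(\bm Z_{r,1}))\to 4\Var(h_1(\bm Z))=\sigma_{\dbl}^2$. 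The sliding case is the main obstacle, because consecutive summands share $r-1$ of their $r$ generating observations. Here I would use a big-block/small-block scheme with big blocks of length proportional to $r_n$ separated by small blocks of length $\ell_n$ (Condition~\ref{cond:ser_dep}(b)), rendering the big-block sums asymptotically independent, and control the small-block contribution and the Lindeberg condition by the moment assumptions. The variance of $\sqrt m$ times the sliding linear part equals $\tfrac{4m}{\nsb^2}\sum_{i,j\in\Isb}\Cov(g_r(\bm Z_{r,i}),g_r(\bm Z_{r,j}))$; writing $\kappa_r(s):=\Cov(g_r(\bm Z_{r,1}),g_r(\bm Z_{r,1+s}))$ and using $\nsb\sim mr$, the dominant contribution comes from overlapping pairs $s<r$ and equals $\sim mr\,[\kappa_r(0)+2\sum_{s=1}^{r-1}\kappa_r(s)]$, so after scaling the variance converges to the Riemann integral $8\int_0^1\kappa(\xi)\,\diff\xi=\sigma_{\sbl}^2$, with $\kappa(\xi):=\Cov(h_1(\bm Z_{1,\xi}),h_1(\bm Z_{2,\xi}))$ and $(\bm Z_{r,1},\bm Z_{r,1+\lfloor \xi r\rfloor})\wconv G_\xi$ by Lemma~\ref{lem:overlap_wconv}.

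Finally, for $\sigma_{\sbl}^2\le\sigma_{\dbl}^2$ it suffices to show $\int_0^1\kappa(\xi)\,\diff\xi\le\tfrac12\Var(h_1(\bm Z))=\tfrac12\kappa(0)$. The structural input is that $\bm Z\sim G$ can be realized as $\bm Z=U\vee V$ with $U,V$ independent componentwise maxima whose stable tail dependence functions are $(1-\xi)L$ and $\xi L$; by the additive form of $L_\xi$ in \eqref{eq:lxi}, the law $G_\xi$ is exactly that of $(U\vee V,\,U\vee V^\ast)$ with $V^\ast$ an independent copy of $V$. Conditioning on the shared part $U$ and using that $U\vee V\sim G$ gives $\kappa(\xi)=\Var(\Exp[h_1(\bm Z)\mid U])$, and interchanging the roles of the two sub-windows yields $\kappa(1-\xi)=\Var(\Exp[h_1(\bm Z)\mid V])$. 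Since $U\perp V$, the ANOVA (Hoeffding) decomposition of $h_1(\bm Z)=h_1(U\vee V)$ gives $\Var(\Exp[h_1(\bm Z)\mid U])+\Var(\Exp[h_1(\bm Z)\mid V])\le\Var(h_1(\bm Z))$, that is $\kappa(\xi)+\kappa(1-\xi)\le\kappa(0)$. Integrating the symmetrized inequality yields $\int_0^1\kappa(\xi)\,\diff\xi=\tfrac12\int_0^1[\kappa(\xi)+\kappa(1-\xi)]\,\diff\xi\le\tfrac12\kappa(0)$, which is the claim. I expect the sliding-blocks CLT, namely the big-block approximation and the accompanying Riemann-sum identification of the variance, to be the technically most demanding step, whereas the efficiency inequality follows cleanly once the conditional-variance representation of $\kappa(\xi)$ is in place.
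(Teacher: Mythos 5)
Your proposal is correct, and for the central limit theorem itself it follows essentially the same route as the paper: the same reduction $\tfrac{\sqrt m}{f(\bm a_r,\bm b_r)}(\Unmb-\theta_r)=\sqrt m\,(U^{\mbl}_{n,r,Z}-\vartheta_r)$, the same Hoeffding decomposition with respect to the marginal law of $\bm Z_{r,1}$ (your $g_r$ is the paper's $h_{1,r}$), an $L^2$-bound on the degenerate part obtained by separating index tuples by their gaps and paying a $\beta(r_n)$-price for the well-separated ones (the paper implements this via Berbee's coupling lemma), a switch to independent disjoint blocks plus a Lyapunov CLT for $\mbl=\dbl$, and a big-block/small-block CLT with Riemann-sum identification of the limiting variance $8\int_0^1\Cov(h_1(\bm Z_{1,\xi}),h_1(\bm Z_{2,\xi}))\,\diff\xi$ for $\mbl=\sbl$. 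Where you genuinely depart from the paper is the efficiency inequality $\sigma_{\sbl}^2\le\sigma_{\dbl}^2$: the paper disposes of it by invoking Lemma A.10 of \cite{ZouVolBuc21}, which is a pre-asymptotic comparison of the variances of sliding and disjoint partial sums of the array $h_{1,r}(\bm Z_{r,i})$, whereas you argue directly at the level of the limit law. Your argument is sound: for $\xi\in[0,1]$ the additive form of $L_\xi$ in \eqref{eq:lxi} shows that $G_\xi$ is the law of $(U\vee V,\,U\vee V^\ast)$ with $U,V,V^\ast$ independent max-stable vectors whose exponent measures carry mass $1-\xi$, $\xi$, $\xi$ of that of $G$; conditioning on the shared component gives $\kappa(\xi)=\Var(\Exp[h_1(\bm Z)\mid U])$ and, by exchanging the roles of the sub-windows, $\kappa(1-\xi)=\Var(\Exp[h_1(\bm Z)\mid V])$, and the orthogonality of the first-order ANOVA terms for functions of independent variables yields $\kappa(\xi)+\kappa(1-\xi)\le\kappa(0)$, whence $\int_0^1\kappa\le\tfrac12\kappa(0)$. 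This buys a self-contained, structural explanation of \emph{why} sliding blocks cannot lose (the covariance at lag $\xi$ is exactly the variance explained by the shared sub-block), at the cost of being tied to the max-stable limit; the paper's cited lemma works directly with the finite-$n$ arrays and therefore extends verbatim to other settings where no such spectral decomposition of the limit is available. Two minor points to tighten if you write this up: in the sliding-blocks variance computation you should also control the non-overlapping lags $s\ge r$ (their individual covariances do not vanish identically for finite $n$ and there are $O((mr)^2)$ of them; a covariance inequality under mixing is needed, as in the paper's Theorem 7.8), and the claim that the overlap in the degenerate sliding part is ``harmless because the kernel is centered'' should be replaced by the counting argument that the non-separated tuples number $O((nr)^2)$, which after the $m/\nsb^4$ normalization contributes $O(m^{-1})$.
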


Note that, under Condition~\ref{cond:ker_traf},  $\theta_r=f(\bm{a}_r, \bm{b}_r) \{ \vartheta_r + \ell(\bm{a}_r, \bm{b}_r)\}$ with $\vartheta_r$ from \eqref{eq:varthetar}.
In certain situations (in particular when $\ell=0$ and $f \equiv \text{const}$; see, e.g., Kendall's tau), one may be willing to regard $\Unmb$ as an estimator for the asymptotic analogue
\begin{align} \label{eq:varthetat}
\tilde \vartheta_r = f(\bm{a}_r, \bm{b}_r) \{ \vartheta + \ell(\bm{a}_r, \bm{b}_r)\}.
\end{align}
For instance, in case of the variance kernel (see also Section~\ref{subsec:var}), $\tilde \vartheta_r$ is the variance of the GEV($b_r, a_r, \gamma$)-distribution, which is exactly the GEV-distribution approximating the distribution of $M_{r,1}$, see Assumption~\ref{cond:mda}.
Under an additional bias condition, we may deduce the following result on the estimation error.

\begin{corollary}\label{cor:U_conv_bias}
Additionally to the assumptions made in Theorem~\ref{thm:U_conv}, suppose that the limit $B =\lim_{n \to \infty} B_n$ exists,
where
\begin{align} \label{eq:bn} 
B_n:= 
\sqrt m( \vartheta_r - \vartheta).
\end{align}
Then, for $\on{mb} \in \lbrace \on{db}, \on{sb} \rbrace$,  
\begin{equation*}
\frac{\sqrt{m}}{f(\bm{a}_r, \bm{b}_r)} \cdot (\Unmb  - \tilde \vartheta_r ) \wconv 
\Norm{B, \sigma_{\on{mb}}^2},
\end{equation*}
with $\sigma^2_{\mbl}$ from \eqref{eq:as_vars} and $\tilde \vartheta_r$ from \eqref{eq:varthetat}.
\end{corollary}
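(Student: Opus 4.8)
The plan is to reduce the statement directly to Theorem~\ref{thm:U_conv} by decomposing the centering term and invoking Slutsky's lemma, so that essentially no new probabilistic input is required beyond the assumed existence of $B$. First I would record the algebraic identity linking $\theta_r$ and $\tilde\vartheta_r$. Applying the location–scale relation \eqref{eq:hZ_to_hM} (a consequence of Condition~\ref{cond:ker_traf}) to the definition \eqref{eq:thetar} of $\theta_r$, and taking expectations with respect to two independent copies of $\bm Z_{r,1}$, one obtains $\theta_r = f(\bm a_r, \bm b_r)\{\vartheta_r + \ell(\bm a_r, \bm b_r)\}$ with $\vartheta_r$ as in \eqref{eq:varthetar}. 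Comparing this with the definition \eqref{eq:varthetat} of $\tilde\vartheta_r = f(\bm a_r, \bm b_r)\{\vartheta + \ell(\bm a_r, \bm b_r)\}$ yields the clean identity $\theta_r - \tilde\vartheta_r = f(\bm a_r, \bm b_r)(\vartheta_r - \vartheta)$, in which the nuisance term $\ell(\bm a_r, \bm b_r)$ cancels.

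Next I would split the centred and rescaled statistic into a stochastic part and a deterministic bias part:
\[
\frac{\sqrt m}{f(\bm a_r, \bm b_r)}(\Unmb - \tilde\vartheta_r) = \frac{\sqrt m}{f(\bm a_r, \bm b_r)}(\Unmb - \theta_r) + \frac{\sqrt m}{f(\bm a_r, \bm b_r)}(\theta_r - \tilde\vartheta_r).
\]
By Theorem~\ref{thm:U_conv}, the first summand converges weakly to $\Norm{0, \sigma_{\on{mb}}^2}$. Inserting the identity from the previous step, the second summand simplifies to $\sqrt m(\vartheta_r - \vartheta) = B_n$, which is deterministic and, by assumption, converges to $B$ as $n\to\infty$. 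An application of Slutsky's lemma then yields weak convergence of the sum to $\Norm{B, \sigma_{\on{mb}}^2}$, which is the assertion. The ordering $\sigma_{\on{sb}}^2 \le \sigma_{\on{db}}^2$ carries over unchanged, since the limiting variances are identical to those in Theorem~\ref{thm:U_conv}.

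Since the heavy lifting is done by Theorem~\ref{thm:U_conv}, I do not expect a genuine obstacle here; the proof is essentially bookkeeping. The one point requiring care is the cancellation of $\ell(\bm a_r, \bm b_r)$, which must be carried out exactly so that the residual deterministic term is precisely $B_n$ and no additional, potentially diverging, contribution survives. This hinges on the fact that $\theta_r$ and $\tilde\vartheta_r$ carry the same scaling factor $f(\bm a_r, \bm b_r)$ and the same additive term $\ell(\bm a_r, \bm b_r)$, so that dividing by $f(\bm a_r, \bm b_r)$ removes all nuisance contributions and leaves only $\vartheta_r - \vartheta$.
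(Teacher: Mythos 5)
Your proposal is correct and matches the paper's own argument: the paper likewise uses the identity $\theta_r = f(\bm a_r,\bm b_r)\{\vartheta_r + \ell(\bm a_r,\bm b_r)\}$ from Condition~\ref{cond:ker_traf} to reduce the deterministic shift to $\sqrt m(\vartheta_r-\vartheta)=B_n\to B$, and then concludes via Theorem~\ref{thm:U_conv} and Slutsky's theorem. No substantive difference.
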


\begin{remark}[Generalizations]
Using the Cramér-Wold Theorem it is possible to generalize the limit theorems to the case of joint convergence involving a finite number of kernel functions. Moreover, as mentioned before and at the cost of a more complicated notation, one might extend the results to higher kernel degrees $p \in \N$. Joint weak convergence then even holds for kernels of different degrees. These generalizations allow, for example, to handle the joint convergence of probability weighted moments estimators of different order, which would be needed to deduce the asymptotics of the PWM-estimator for the parameters of the GEV-distribution.
Further generalizations concerning different model assumptions are worked out in Section~\ref{sec:ext} and Section~\ref{sec:ext_alpha} in the supplement.
\end{remark}

\begin{remark}[A bias reduced version of the sliding blocks estimator]
In view of Lemma \ref{lem:overlap_wconv} below, the block maxima $\bm M_{r,i}$ and $\bm M_{r,j}$ are asymptotically independent for $|i-j| \ge r$ and asymptotically dependent otherwise. As a consequence, the summands $h(\bm M_{r,i}, \bm M_{r,j})$ with $|i-j|<r$ induce a \textit{dependency bias}, which suggests to replace $\Unsb$ by 
\[
\tilde U_{n,r}^{\sbl} := 
\binom{\tilde n_{\sbl}}{2}^{-1} \sum_{(i,j) \in \tilde J_n^{\sbl}} 
h(\bm M_{r,i}, \bm M_{r,j}),
\]
where $\tilde J_n^{\sbl} = \lbrace (i,j) \in (\Isb)^2 \colon i<j, j-i \ge r \rbrace$.
Note that $|J_n^{\sbl} \setminus \tilde J_n^{\sbl}|=O(nr)$, 
which can used to show that
$
\frac{\sqrt m}{ f(a,b)} (\tilde U_{n,r}^{\sbl} - U_{n,r}^{\sbl}) = O_{\mathbb P}\big((1+\ell(\bm a_r, \bm b_r)) m^{-1/2}\big)
$
with $\ell$ from Condition~\ref{cond:ker_traf}. Hence, the two estimators are typically asymptotically equivalent.
Throughout, we only consider $U_{n,r}^{\sbl}$ for simplicity.
\end{remark}

\section{Examples}
\label{sec:examples}

Details are worked out for specific kernel functions of interest.

\subsection{Variance estimation} \label{subsec:var}

The variance is one of most fundamental parameters to describe a distribution of interest, which, in our case, is $\sigma_r^2 := \Var(M_{r,1})$. The respective empirical variance, based on either disjoint or sliding block maxima, is given by
\[
\hat{\sigma}^2_{n,r, \mbl} 
=
\frac{1}{\nmb -1}\sum_{i \in \Imb} \big( M_{r,i} - \overline{M}_{r}^{\mbl} \big)^2, \qquad \mbl \in \{ \dbl, \sbl\},
\] 
where $\overline{M}_{r}^{\mbl}:= \nmb^{-1} \sum_{i \in \Imb} M_{r,i}$. As is well-known, the empirical variance can be written as a U-statistic of order $p=2$, that is, 
\[
\hat{\sigma}^{2}_{n,r, \mbl}  = \Unmb(h_{\mathrm{Var}}), \qquad h{_\mathrm{Var}}(x,y)= (x-y)^2/2.
\]
The following result is a direct consequence of Theorem~\ref{thm:U_conv}.

\begin{corollary}\label{cor:var_est}
Suppose Condition \ref{cond:ser_dep} is met with $\gamma < 1/4$ and that there exists a $\nu > 2/\omega$ such that $\limsup_r \mb{E} |Z_{r,1}|^{4+\nu} < \infty$. Then 
\begin{equation*}
\frac{\sqrt{m}}{a_r^2} \Big( \hat{\sigma}^2_{n,r, \mbl} - \sigma^2_r \Big)
\wconv \Norm{0, \sigma^2_{\mbl}},
\end{equation*}
where $\sigma^2_{\dbl}$ and $\sigma^2_{\sbl}$ only depend on the tail index $\gamma$. Explicit formulas are provided in (\ref{eq:var_est_asyvar_db}) and (\ref{eq:var_est_asyvar_sb}) in the supplement, respectively.  Moreover, $\sigma^2_{\sbl} < \sigma^2_{\dbl}$.
\end{corollary}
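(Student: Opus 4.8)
The plan is to obtain the statement as a direct specialization of Theorem~\ref{thm:U_conv} to the variance kernel $h_{\mathrm{Var}}(x,y)=(x-y)^2/2$, for which $\hat{\sigma}^2_{n,r,\mbl}=\Unmb(h_{\mathrm{Var}})$ as recorded above. By Example~\ref{ex:kernels}(2) this kernel satisfies Condition~\ref{cond:ker_traf} with $d=1$, $p=2$, $f(a,b)=a^2$ and $\ell\equiv 0$, so that $f(\bm a_r,\bm b_r)=a_r^2$; being a polynomial it is continuous on all of $\R^2$ (hence $\dlambda^2$-a.e.\ continuous) and bounded on compact sets. The centering constant from \eqref{eq:thetar} is $\theta_r=\tfrac12\Exp[(\tilde M^{(1)}_{r,1}-\tilde M^{(2)}_{r,1})^2]$ for independent copies of $M_{r,1}$, which equals $\Var(M_{r,1})=\sigma_r^2$. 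Thus, once the remaining hypotheses are verified, the theorem yields precisely $\tfrac{\sqrt m}{a_r^2}(\hat{\sigma}^2_{n,r,\mbl}-\sigma_r^2)\wconv\Norm{0,\sigma^2_{\mbl}}$, with limiting variances as in \eqref{eq:as_vars}.

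It remains to check Conditions~\ref{cond:ser_dep} (assumed) and~\ref{cond:h_int_std}. For the latter I would use the elementary bound $|h_{\mathrm{Var}}(x,y)|^{2+\nu}\le c\,(|x|^{2(2+\nu)}+|y|^{2(2+\nu)})$, so that, invoking stationarity of $(\bm Z_{r,i})_i$ for part~(b), both double integrals are controlled by a uniform-in-$r$ bound on $\Exp|Z_{r,1}|^{2(2+\nu)}$, finite by the moment hypothesis (whose exponent and the calibration $\nu>2/\omega$ are chosen precisely for this). The constraint $\gamma<1/4$ enters here as well: it guarantees that $G_\gamma$ has a finite fourth moment, which is exactly what renders the limiting variances finite, since, as seen below, $h_1$ is quadratic.

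Next, that $\sigma^2_{\dbl}$ and $\sigma^2_{\sbl}$ depend only on $\gamma$ is a consequence of the standardization. Since $d=1$, the stable tail dependence function is the identity and $G=G_\gamma$, so the bivariate laws $G_\xi$ in \eqref{eq:gxi} are determined by $\gamma$ alone. The first-order projection \eqref{eq:h_1} is the explicit quadratic $h_1(z)=\Exp[h_{\mathrm{Var}}(z,Z)]-\vartheta=\tfrac12 z^2-z\,\Exp[Z]+\tfrac12\Exp[Z^2]-\vartheta$ with $Z\sim G_\gamma$; hence $\sigma^2_{\dbl}=4\Var(h_1(Z))$ and $\sigma^2_{\sbl}=8\int_0^1\Cov(h_1(Z_{1,\xi}),h_1(Z_{2,\xi}))\,\diff\xi$ are functionals of $G_\gamma$ only, the closed forms being the content of the supplement.

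Finally, the strict inequality $\sigma^2_{\sbl}<\sigma^2_{\dbl}$ is the main obstacle, since Theorem~\ref{thm:U_conv} furnishes only ``$\le$''. Strictness cannot be read off pointwise in $\xi$: writing $g(\xi)=\Cov(h_1(Z_{1,\xi}),h_1(Z_{2,\xi}))$, Cauchy--Schwarz gives only $g(\xi)\le g(0)=\Var(h_1(Z))$, hence the crude $\sigma^2_{\sbl}\le 2\sigma^2_{\dbl}$, which is too weak. I would instead argue from non-degeneracy: $h_1$ is a genuinely quadratic, hence non-constant, function and $G_\gamma$ is non-degenerate, so $\Var(h_1(Z))>0$; moreover, as $\xi\uparrow 1$ the pair $(Z_{1,\xi},Z_{2,\xi})$ decouples to independence, so $g(\xi)<\Var(h_1(Z))$ on a set of positive Lebesgue measure. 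If the proof of Theorem~\ref{thm:U_conv} identifies the equality case of ``$\le$'' as degeneracy of $h_1$ --- the mechanism I expect --- then $\Var(h_1(Z))>0$ already forces strictness. Otherwise the gap must be closed quantitatively, and the most transparent route is to insert the closed-form expressions for $\sigma^2_{\dbl}$ and $\sigma^2_{\sbl}$ from the supplement and compare them as functions of $\gamma\in(-\infty,1/4)$; this final comparison is the step I expect to require genuine computation.
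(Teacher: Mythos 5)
Your reduction to Theorem~\ref{thm:U_conv} is exactly the paper's route: identify $\hat\sigma^2_{n,r,\mbl}=\Unmb(h_{\mathrm{Var}})$, note that Condition~\ref{cond:ker_traf} holds with $f(a,b)=a^2$ and $\ell\equiv0$ so that $\theta_r=\Var(M_{r,1})=\sigma_r^2$, verify Condition~\ref{cond:h_int_std} from the moment hypothesis, and invoke continuity of the kernel. One bookkeeping slip: your bound $|h_{\mathrm{Var}}(x,y)|^{2+\nu}\le c\,(|x|^{4+2\nu}+|y|^{4+2\nu})$ requires uniform moments of order $4+2\nu$, whereas the hypothesis only provides $\limsup_r\Exp|Z_{r,1}|^{4+\nu}<\infty$. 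The paper instead raises $|h_{\mathrm{Var}}|$ to the power $2+\nu/2$, giving $|h_{\mathrm{Var}}(x,y)|^{2+\nu/2}\le 2^{2+\nu/2}\bigl(|x|^{4+\nu}+|y|^{4+\nu}\bigr)$, so that Condition~\ref{cond:h_int_std} is met with exponent parameter $\nu/2$; you should do the same (this costs a factor of $2$ in the calibration against $2/\omega$, a point the paper's own proof also glosses over). This is fixable and not a conceptual problem; the rest of your CLT argument, including the identification of $\theta_r$ with $\sigma_r^2$ and the observation that $d=1$ forces $\sigma^2_{\dbl},\sigma^2_{\sbl}$ to depend on $\gamma$ alone, matches the paper.

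The genuine gap is the strict inequality $\sigma^2_{\sbl}<\sigma^2_{\dbl}$. You correctly diagnose that Theorem~\ref{thm:U_conv} only delivers ``$\le$'' (the paper obtains it from Lemma A.10 of Zou, Volgushev and B\"ucher, which is not accompanied by a characterization of the equality case as degeneracy of $h_1$), and you yourself note that your decoupling observation as $\xi\uparrow1$ only yields $\int_0^1\Cov\bigl(h_1(\bm Z_{1,\xi}),h_1(\bm Z_{2,\xi})\bigr)\diff\xi<\Var(h_1(\bm Z))$, i.e.\ $\sigma^2_{\sbl}<2\sigma^2_{\dbl}$, which is not enough. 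Your conditional argument (``if the equality case is degeneracy of $h_1$\,\dots'') therefore does not go through as written, and your fallback --- inserting the closed forms \eqref{eq:var_est_asyvar_db} and \eqref{eq:var_est_asyvar_sb} and comparing them over $\gamma<1/4$ --- is indeed the route the paper takes (those formulas are derived in the supplement via Hoeffding's covariance identity), but you do not carry out the comparison, so the strictness claim remains unproven in your write-up.
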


The assumption $\gamma<1/4$ is natural, as asymptotic normal results on empirical variances require finite fourth moments; in the case of the GEV-distribution, this exactly corresponds to $\gamma<1/4$.
Figure \ref{fig:asy_var_var_ratio} shows the ratio of the asymptotic variances, $\sigma^2_{\dbl} / \sigma^2_{\sbl}$ as a function of $\gamma$. We observe that the estimator based on sliding blocks has a significantly smaller variance for negative $\gamma$, say $\gamma < -.25$, while hardly any difference is visible for positive $\gamma$.

\begin{figure}[t!]
\centering
\makebox{\includegraphics[width=0.7\textwidth]{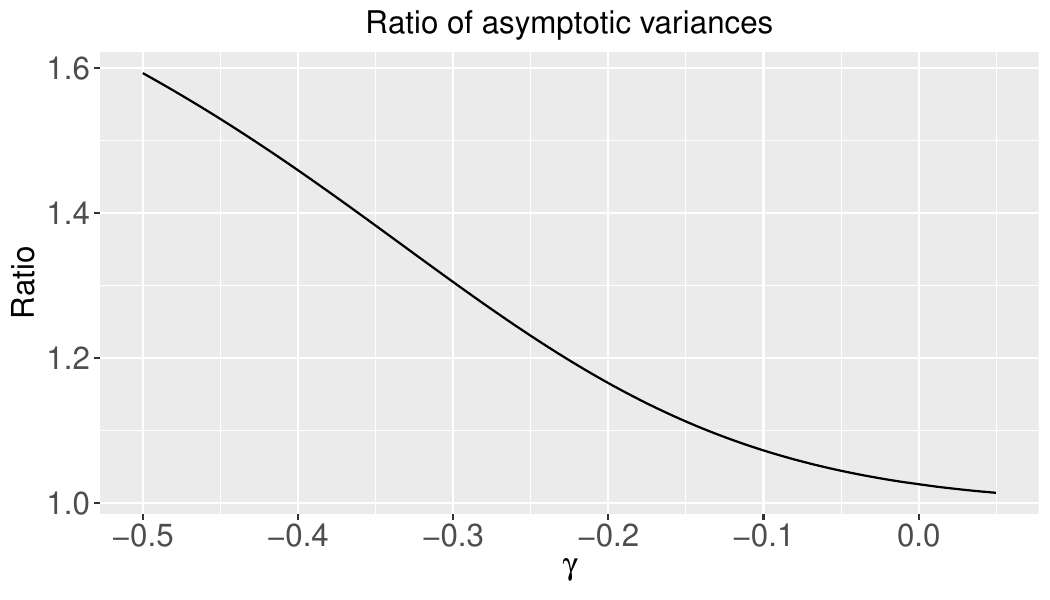}}\vspace{-.3cm}
\caption{
Graph of $\gamma \mapsto \sigma^2_{\dbl}/\sigma^2_{\sbl}$ with $\sigma^2_{\mbl}$ as in \eqref{eq:var_est_asyvar_db} and  \eqref{eq:var_est_asyvar_sb}.
}\label{fig:asy_var_var_ratio}	
\vspace{-.3cm}
\end{figure}

The previous results may be made more explicit when imposing a specific time series model. We exemplary  work out details for a marginal transformed version of the ARMAX-model. The model is defined as follows: for an i.i.d.\ sequence $(W_t)_{t \in \Z}$ of Fréchet(1) distributed random variables and $\alpha \in (0,1]$, consider the ARMAX(1) recursion defined as
\begin{equation}\label{eq:armax}
Y_t = \max \left \lbrace \alpha Y_{t-1}, \, (1-\alpha) W_t \right\rbrace, \quad t \in \Z.
\end{equation}
The recursion has the stationary solution $
Y_t := \max_{j \geq 0} (1-\alpha) \alpha^j W_{t-j}$, which has Fréchet(1) distributed marginals and extremal index $\theta=1-\alpha$, see Example 10.5 in \cite{BeiGoeSegTeu04}.
Define $X_t$ as the transformed random variables $X_t := F_{\gamma}^{\leftarrow}(F_W(Y_t)),$ where $F_W$ is the c.d.f. of a Fréchet(1) distribution, $F_\gamma$ is the c.d.f.\ of the Pareto family defined as
\begin{align} \label{eq:gpd}
F_\gamma(x) := 
\begin{cases}
\left( 1- (1+\gamma x)^{-1/\gamma} \right) \ind{x \geq 0}, \quad &  \gamma > 0\\
\left( 1- (1+\gamma x)^{-1/\gamma} \right) \ind{ 0 \leq x \leq -1/\gamma}, \quad &\gamma < 0\\
\left( 1- \exp(-x) \right) \ind{x \geq 0}, & \gamma = 0
\end{cases},
\end{align}
and where $F^{\leftarrow}$ is the left continuous generalized inverse of $F$. 
By \cite{BerBuc18} and \cite{Bra05} the untransformed time series $(Y_t)_t$ is exponentially $\beta$-mixing, which implies the same for $(X_t)_t$. This results in a large spectrum of choices for $r_n$ and $\ell_n$ satisfying Condition~\ref{cond:ser_dep}, which can hence be regarded as non-restrictive. 
We will prove in Section~\ref{sec:proofs_ex} that, if $\gamma<1/4$ and if $r= o(n), n = o(r^3)$, all assumptions from Corollary~\ref{cor:var_est} are met, with $a_r = (r(1-\alpha))^\gamma$ and  $b_r = \lbrace (r(1-\alpha))^\gamma -1 \rbrace / \gamma$. Hence,
\begin{equation}
\label{eq:var-armax}
 \frac{\sqrt{m}}{(r(1-\alpha))^{2\gamma}} ( \hat{\sigma}^2_{n,r, \mbl} - \sigma_r^2 )
 \wconv \Norm{0, \sigma^2_{\mbl}}
\end{equation}
as asserted.
Moreover, one may show that bias condition is met with $B=0$, whence $\sigma_r^2$ may be replaced by $(r(1-\alpha))^{2\gamma} \tau^2(\gamma)$, where 
$\tau^2(\gamma) := 
\frac1\gamma \{ \Gamma(1-2\gamma)- \Gamma(1-\gamma)^2\} \bm 1(\gamma \in (-\infty, 1/2) \setminus \{0\} ) + (\pi^2/6) \bm 1( \gamma = 0)   
$
is the variance of the $\GEV{\gamma}$ distribution, with
$\Gamma(x) := \int_0^\infty t^{x-1}e^{-t} \on{d}\!t,\, x > 0$, the Gamma function.

\subsection{The probability weighted moment estimator}
\label{subsec:asy_pwm}

Let $M \sim G_\eta$ be a GEV-distributed random variable with parameter $\eta = (\mu,\sigma, \gamma)^\prime \in \R\times(0,\infty)\times (-\infty,1).$ For $k\in \N_0$, the $k$th probability weighted moment (PWM) of $M$ is given by
\begin{equation}\label{eq:pwm}
\beta_{\eta,k} := \Exp[MG_{\eta}^k(M)].
\end{equation}
It is well-known that $\eta$ is a one-to-one function of the first three probability weighted moments \citep{HosWal85}. Replacing the moments in (\ref{eq:pwm}) by suitable estimators and plugging those into the one-to-one function results in (the) PWM estimator for $\eta$. One version, as proposed in \cite{pwm_est}, is given by
\begin{equation}
\tilde{\beta}_0 := \frac{1}{n}\sum_{i=1}^n M_i, \quad 
\tilde{\beta}_k := \frac{1}{n} \sum_{i=1}^n \frac{(i-1)\cdot \ldots \cdot (i-k)}{(n-1)\cdot \ldots \cdot (n-k)}M_{(i)}, \quad k \geq 1
\end{equation}
where $\mc{M} = (M_1,\ldots,M_n)$ is a sample of random variables distributed as $M$ and $M_{(1)} \leq \ldots \leq M_{(n)}$ is the ordered sample. If $\mc{M}$ is an i.i.d.\ sample, then there are no ties with probability 1, whence $\tilde \beta_{k} = \hat \beta_k$, where, for $k\in\N$, 
\begin{equation}\label{eq:pwm_ust}
\hat{\beta}_{k-1} = \binom{n}{k}^{-1} \sum_{1 \leq i_1 < \ldots < i_k \leq n}
h_{\on{pwm},k}(M_{i_1},\ldots, M_{i_k})
\end{equation}
with the permutation invariant kernel function
\begin{equation}\label{eq:pwm_kern}
h_{\on{pwm}, k}(x_1,\ldots, x_k) := \frac{1}{k} \sum_{j=1}^{k} \bm 1 \Big\{ \max_{1 \leq i \leq k, i \neq j} x_i \leq x_j \Big\} x_j
\end{equation}
Clearly, $\hat{\beta}_{k-1}$ is a U-statistic of order $k$ that is unbiased for $\Exp[\hat{\beta}_{k-1}] = \beta_{\eta,k-1}$ in case the sample is i.i.d.

In this section, we apply Theorem~\ref{thm:U_conv} to derive limit results for the estimator
\[
\hat \beta_{k-1}^{\mbl} = U_{n,r}^{\mbl}(h_{\on{pwm},k})
\]
with $\mbl \in \{\dbl, \sbl\}$. 
For simplicity, we restrict attention to the case $k=2$, which yields a U-statistic of order $2$. Since the function $h_{\on{pwm},2}$ does not satisfy Condition~\ref{cond:ker_traf}, we will need the modified kernel function $\tilde{h}_{\on{pwm},2}(x,y) := \max(x,y)/2$ from Example~\ref{ex:kernels}.

\begin{proposition}\label{ex:pwm}
Suppose that $(X_t)_{t \in \Z}$ from Condition~\ref{cond:mda} does not contain ties with probability 1 and that Condition \ref{cond:ser_dep} is met. If there exists $\nu > 2/\omega$ such that $\limsup_{r\to\infty} \mb{E} |Z_{r,1}|^{2+\nu} < \infty$, then, for $\mbl \in \{\dbl, \sbl\}$,  
\[
\frac{\sqrt{m}}{a_r} \left \lbrace U_{n,r}^{\mbl}(h_{\on{pwm},2}) -  U_{n,r}^{\mbl}(\tilde{h}_{\on{pwm},2}) \right \rbrace 
\xrightarrow[]{L^2} 0 
\]
If, moreover, the limit $B := \lim_n B_n$ exists, where $B_n = \sqrt{m} \Exp[F_r(Z_{r,1})Z_{r,1} - G_\gamma(Z)Z]$ with $F_r$ the c.d.f.\ of $Z_{r,1}$ and $Z \sim G_\gamma$, then
\[
\sqrt{m}\left( \frac{\hat{\beta}_1^{\mbl} - \beta_{(0,1,\gamma),1}}{a_r} \right) \wconv
\Norm{B, \sigma^2_{\mbl}}, 
\]
with $0<\sigma^2_{\sbl} < \sigma^2_{\dbl}.$ 
\end{proposition}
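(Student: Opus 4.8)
The plan is to deduce the result from Theorem~\ref{thm:U_conv} and Corollary~\ref{cor:U_conv_bias} applied to the modified kernel $\tilde h_{\on{pwm},2}(x,y)=\max(x,y)/2$, after showing that replacing $h_{\on{pwm},2}$ by $\tilde h_{\on{pwm},2}$ is asymptotically negligible. The two kernels coincide off the diagonal: since $h_{\on{pwm},2}(x,y)=\tfrac12[\ind{y\le x}x+\ind{x\le y}y]$ equals $\max(x,y)/2$ whenever $x\ne y$, their difference is $h_{\on{pwm},2}(x,y)-\tilde h_{\on{pwm},2}(x,y)=\tfrac{x}{2}\ind{x=y}$. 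Consequently the normalized difference of the two U-statistics collects only the summands with $M_{r,i}=M_{r,j}$, i.e.\ the tied block maxima, and everything reduces to controlling these ties.

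First I would prove the $L^2$-statement. For disjoint blocks the windows $[i,i+r-1]$, $i\in\Idb$, are pairwise disjoint, so under the no-ties assumption on $(X_t)_t$ (and continuous margins from Condition~\ref{cond:mda}) one has $M_{r,i}\ne M_{r,j}$ almost surely for every $(i,j)\in\Jdb$; hence $\Undb(h_{\on{pwm},2})-\Undb(\tilde h_{\on{pwm},2})=0$ identically and the claim is trivial. For sliding blocks a tie $M_{r,i}=M_{r,j}$ with $i<j$ forces $j-i<r$ (overlapping windows) together with the common maximum lying in the overlap $\{j,\dots,i+r-1\}$. Writing $M_{r,i}=a_r Z_{r,i}+b_r$ and using $\ind{M_{r,i}=M_{r,j}}=\ind{Z_{r,i}=Z_{r,j}}$, the quantity of interest becomes
\[
\frac{\sqrt m}{a_r}\big(\Unsb(h_{\on{pwm},2})-\Unsb(\tilde h_{\on{pwm},2})\big)
=\frac{\sqrt m}{2}\binom{\nsb}{2}^{-1}\!\!\sum_{(i,j)\in\Jsb}\!\Big(Z_{r,i}+\tfrac{b_r}{a_r}\Big)\ind{Z_{r,i}=Z_{r,j}}.
\]
I would bound its $L^2$-norm by Minkowski's and Hölder's inequalities: each summand contributes $\Ew{|Z_{r,1}|^{2+\nu}}^{1/(2+\nu)}\Prob{Z_{r,i}=Z_{r,j}}^{c}$ (for the first piece) and $\tfrac{|b_r|}{a_r}\Prob{Z_{r,i}=Z_{r,j}}^{1/2}$ (for the second), where the tie-probability at gap $\delta=j-i$ is of the order of the overlap fraction $(r-\delta)/(r+\delta)$. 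Summing over the $O(n)$ pairs at each of the $r$ gaps gives a total of order $nr$, which against the normalization $\binom{\nsb}{2}^{-1}\sqrt m\asymp n^{-2}\sqrt{n/r}$ yields a bound of order $(1+|b_r|/a_r)\sqrt{r/n}\to0$. This step is the main obstacle: the Minkowski bound sacrifices cancellation, so one must verify that the tie-probabilities sum to $O(r)$ and, crucially, that the location/scale ratio $b_r/a_r$ (which may diverge when $\gamma<0$) is tamed by the regular-variation structure of $(a_r)$ and $(b_r)$ from Condition~\ref{cond:mda}.

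Given the $L^2$-claim, the central limit theorem follows by transfer. The modified kernel $\tilde h_{\on{pwm},2}$ satisfies Condition~\ref{cond:ker_traf} with $f(a,b)=a$ and $\ell(a,b)=b/(2a)$, is continuous and bounded on compact sets, and Condition~\ref{cond:h_int_std} is inherited from $\limsup_r\Ew{|Z_{r,1}|^{2+\nu}}<\infty$ via $|\max(x,y)|^{2+\nu}\le 2^{1+\nu}(|x|^{2+\nu}+|y|^{2+\nu})$ together with stationarity of $(Z_{r,i})_i$. Hence Theorem~\ref{thm:U_conv}, respectively Corollary~\ref{cor:U_conv_bias}, applies to $\Unmb(\tilde h_{\on{pwm},2})$. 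Here $\vartheta_r=\Ew{\max(Z_{r,1},\tilde Z_{r,1})/2}=\Ew{Z_{r,1}F_r(Z_{r,1})}$ and $\vartheta=\Ew{ZG_\gamma(Z)}=\beta_{(0,1,\gamma),1}$, so that the bias sequence in Corollary~\ref{cor:U_conv_bias} equals the stated $B_n=\sqrt m\,\Ew{F_r(Z_{r,1})Z_{r,1}-G_\gamma(Z)Z}$. Combining the limit for $\Unmb(\tilde h_{\on{pwm},2})$ with the $L^2$-negligibility through Slutsky's lemma, and recalling $\hat\beta_1^{\mbl}=\Unmb(h_{\on{pwm},2})$, gives the asserted convergence for $\hat\beta_1^{\mbl}$, upon identifying the centering constant $\tilde\vartheta_r=f(a_r,b_r)\{\vartheta+\ell(a_r,b_r)\}$.

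Finally, for the variance comparison $0<\sigma^2_{\sbl}<\sigma^2_{\dbl}$: positivity of $\sigma^2_{\dbl}=4\Var(h_1(Z))$ follows because $h_1(z)=\Ew{\max(z,Z)/2}-\vartheta$ has derivative $\tfrac12 G_\gamma(z)>0$ on the interior of $S_\gamma$, hence is strictly increasing and non-constant; the inequality $\sigma^2_{\sbl}\le\sigma^2_{\dbl}$ is the general statement of Theorem~\ref{thm:U_conv}. I would argue strictness as in Corollary~\ref{cor:var_est}, i.e.\ by verifying that the limiting covariance $\xi\mapsto\Cov(h_1(Z_{1,\xi}),h_1(Z_{2,\xi}))$ drops strictly below its value $\Var(h_1(Z))$ at $\xi=0$ on a set of positive Lebesgue measure, the dependence encoded by $C_\xi$ being strictly imperfect for $\xi\in(0,1)$.
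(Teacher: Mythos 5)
Your overall route is the same as the paper's: both proofs note that $h_{\on{pwm},2}$ and $\tilde h_{\on{pwm},2}$ differ only on the diagonal, so the difference of the two U-statistics is carried entirely by tied block maxima; pairs of non-overlapping blocks contribute nothing almost surely by the no-ties assumption; the remaining $O(nr)$ overlapping pairs are handled by a crude Minkowski/counting bound against the normalization $\sqrt m\binom{\nsb}{2}^{-1}$; and the limit theorem is then transferred from Corollary~\ref{cor:U_conv_bias} applied to $\tilde h_{\on{pwm},2}$, whose verification of Conditions~\ref{cond:ker_traf} and \ref{cond:h_int_std} and whose identification of $\vartheta_r=\Exp[Z_{r,1}F_r(Z_{r,1})]$ and $\vartheta=\beta_{(0,1,\gamma),1}$ you carry out exactly as needed. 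Your treatment of the disjoint case is in fact cleaner than the paper's (the difference is identically zero, since disjoint blocks never overlap, so no counting bound is needed there), and your tie-probability estimate $(r-\delta)/(r+\delta)$ is superfluous — bounding each indicator by $1$ already yields the $O(nr)$ count that suffices.

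The one substantive point is the affine term $\tfrac{b_r}{2a_r}\ind{M_{r,i}=M_{r,j}}$ coming from $M_{r,i}/(2a_r)=Z_{r,i}/2+b_r/(2a_r)$. You isolate it correctly and arrive at a bound of order $(1+|b_r|/a_r)\,m^{-1/2}$, but you only remark that $b_r/a_r$ ``must be tamed'' by the regular-variation structure — and under the stated hypotheses it need not be: Condition~\ref{cond:mda} controls $a_{\lfloor rs\rfloor}/a_r$ and $(b_{\lfloor rs\rfloor}-b_r)/a_r$ but not $b_r/a_r$ itself, and for $\gamma<0$ one typically has $b_r/a_r\asymp r^{-\gamma}\to\infty$ while the tie probability for overlapping blocks is bounded away from zero, so there is no cancellation and the bound vanishes only under an extra growth restriction of the type $r^{1-2\gamma}=o(n)$. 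So this step of your argument is genuinely open. You should be aware, however, that the paper's proof does not close it either: there $S_n^{\mbl}$ and $R_n^{\mbl}$ are written directly in terms of the rescaled maxima $Z_{r,i}$, so the $b_r/(2a_r)$ contribution never appears, and the displayed identity for $\frac{\sqrt m}{a_r}\{U_{n,r}^{\mbl}(h_{\on{pwm}})-U_{n,r}^{\mbl}(\tilde h_{\on{pwm}})\}$ holds only off the tie events. Finally, for $0<\sigma^2_{\sbl}<\sigma^2_{\dbl}$ the paper simply cites \cite{BucZan23}, whereas you sketch a direct argument via strict monotonicity of $h_1$ and a strict drop of $\xi\mapsto\Cov(h_1(\bm Z_{1,\xi}),h_1(\bm Z_{2,\xi}))$; that sketch is plausible but would require the explicit computation that the citation replaces.
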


Note that similar asymptotics have also been worked out in \cite{BucZan23}, where the derivation was based on explicit expansions of the kernel function involving empirical cumulative distribution functions.
Comparing our result with their Theorem 3.5, we observe that our result is slightly more restrictive, since we impose $\beta$-mixing rather than $\alpha$-mixing. An extension to $\alpha$-mixing is given in Section~\ref{sec:ext_alpha} in the supplement.

\subsection{Estimation of Kendall's tau}
\label{subsec:tau}

Kendall's tau statistic is a well-known nonparametric distribution-free measure of rank correlation that quantifies the degree of association between two variables \citep{Kend38}. The population version $\tau=\tau(\bm X)$ for a bivariate vector $\bm X=(X^{(1)},X^{(2)})$ is defined as follows: for i.i.d.\ copies $\bm X_1, \bm X_2$ of $\bm X$, we have $\tau := \pi_c - \pi_d = 2\pi_c - 1,$ where $\pi_c := \Pm((X_1^{(1)} - X_2^{(1)})(X_1^{(2)} - X_2^{(2)}) > 0)$ and $\pi_d := \Pm((X_1^{(1)} - X_2^{(1)})(X_1^{(2)} - X_2^{(2)}) < 0)$ denote the probabilities of concordance and discordance of $\bm X_1, \bm X_2$, respectively. 
Applied to bivariate extreme value distributions, Kendall's tau provides a useful summary of extremal dependence; see \mbox{\cite[pp.~274-275]{BeiGoeSegTeu04}} and the references therein.

For a bivariate sample $(\bm{X_1}, \ldots, \bm{X}_n)$ Kendall's $\tau$-statistic can be written as $\hat \tau_n = \binom{n}{2}^{-1} \sum_{1 \leq i < j \leq n} \{ 2 h_\tau(\bm{X}_i, \bm{X}_j) - 1 \}$ with $h_\tau$ as in Example~\ref{ex:kernels}(5). 
For $\mbl \in \{\dbl, \sbl\}$, let $\hat \tau_{n,r}^{\mbl}$ denote Kendall's $\tau$-statistic applied to the sample of disjoint or sliding block maxima. An application of Theorem~\ref{thm:U_conv} yields the following result.

\begin{proposition}\label{ex:kend_tau}
Suppose Condition~\ref{cond:ser_dep} is met. Then, with $\tau_r := \tau(M^{(1)}_{r,1}, M^{(2)}_{r,1})$, we have, for $\mbl \in \{ \dbl, \sbl\}$,
\[
\sqrt{m} \big( \hat{\tau}^{\mbl}_{n,r} - \tau_r \big) \wconv \Norm{0, \sigma^2_{\mbl}},
\]
where the asymptotic variances can be represented as a function of the extreme-value copula $C$ from \eqref{eq:evc} as follows:
\begin{align*}
  \sigma^2_{\dbl} & = 16 \Bigg \lbrace \int_{[0,1]^2} \big\{ C(\bm u) + \bar C(\bm u) \big\}^2 \diff C(\bm{u}) - 4 \Big( \int_{[0,1]^2} C(\bm{u}) \diff C(\bm{u}) \Big)^2  \Bigg \rbrace\\ 
\\
\sigma^2_{\sbl} & = 
32 \int_0^1  \Bigg( \int_{[0,1]^2 \times [0,1]^2} \big\{ C(\bm u) + \bar C(\bm u) \big\}\big\{ C(\bm v) + \bar C(\bm v) \big\}\diff C_\xi(\bm u, \bm v) \\
& \hspace{6cm} - 4 \Big( \int_{[0,1]^2} C(\bm u) \diff C(\bm u) \Big)^2 \Bigg) \diff \xi.
\end{align*}
where $\bar C(\bm u) = 1 - u^{(1)}-u^{(2)} + C(\bm u)$ denotes the survival copula of $C$.
\end{proposition}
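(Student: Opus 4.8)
The plan is to apply Theorem~\ref{thm:U_conv} to the recentred kernel $g := 2h_\tau - 1$, where $h_\tau$ is the Kendall kernel of Example~\ref{ex:kernels}(5), so that $\Unmb(g) = \hat{\tau}_{n,r}^{\mbl}$ holds by the representation of Kendall's statistic recalled before the proposition. All hypotheses of the theorem hold for $g$ without extra assumptions: the kernel is bounded, taking values in $\{-1,1\}$, so Condition~\ref{cond:h_int_std} is satisfied trivially for every $\nu$, in particular for some $\nu>2/\omega$; since $h_\tau$ depends on its arguments only through the signs of the componentwise differences, it is invariant under the componentwise location-scale maps appearing in \eqref{eq:ker_traf}, so Condition~\ref{cond:ker_traf} holds with $f\equiv 1$ and $\ell\equiv 0$; finally $g$ is bounded on all of $(\R^d)^2$ and is discontinuous only on the $\dlambda^{2d}$-null set where $(x^{(1)}-y^{(1)})(x^{(2)}-y^{(2)})=0$. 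Because $f\equiv 1$, the theorem yields $\sqrt m\,(\Unmb(g)-\theta_r(g))\wconv \Norm{0,\sigma^2_{\mbl}}$, and \eqref{eq:thetar} shows that $\theta_r(g)$ is exactly the population Kendall's tau of two independent copies of $\bm M_{r,1}$, i.e.\ $\theta_r(g)=\tau_r$. The asserted weak convergence and the ordering $\sigma^2_{\sbl}\le\sigma^2_{\dbl}$ follow at once; it remains only to express the two variances through $C$.

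The key step is to compute the projection $h_1$ from \eqref{eq:h_1} explicitly. Fixing $\bm z$ and setting $\bm u=(G_{\gamma^{(1)}}(z^{(1)}),G_{\gamma^{(2)}}(z^{(2)}))$, I would split the concordance event into the two cases that both coordinates of $\bm Z\sim G$ fall below, respectively above, $\bm z$; by continuity of the margins this gives $\Exp[h_\tau(\bm z,\bm Z)]=C(\bm u)+\bar C(\bm u)$, with $\bar C$ the survival copula, whence $h_1(\bm z)=2\{C(\bm u)+\bar C(\bm u)\}-1-\vartheta$. Passing to the probability integral transform, so that $\bm U\sim C$ when $\bm Z\sim G$, and using that the copula margins have mean $1/2$ to obtain $\int\bar C\,\diff C=\int C\,\diff C$, I get $\vartheta=4\int_{[0,1]^2}C\,\diff C-1$, the usual population Kendall's tau of $G$.

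With $h_1$ explicit, both variances follow by substitution into \eqref{eq:as_vars}. In the disjoint case the additive constant $-1-\vartheta$ drops out of $\Var(h_1(\bm Z))=4\Var(C(\bm U)+\bar C(\bm U))$, and expanding the square while rewriting the centering term via $\int\{C+\bar C\}\,\diff C=2\int C\,\diff C$ reproduces the claimed $\sigma^2_{\dbl}$. In the sliding case I would use that, for $(\bm Z_{1,\xi},\bm Z_{2,\xi})\sim G_\xi$, each of the two blocks of $d$ coordinates carries the copula $C$ while their joint copula is $C_\xi$ from \eqref{eq:cxi}; the probability-integral-transformed pair $(\bm U_1,\bm U_2)$ is then $C_\xi$-distributed with both margins equal to $C$, so that $\Cov(h_1(\bm Z_{1,\xi}),h_1(\bm Z_{2,\xi}))$ equals $4$ times the $\xi$-integrand appearing in the stated formula for $\sigma^2_{\sbl}$, and integrating $8\times$ over $\xi\in(0,1)$ yields $\sigma^2_{\sbl}$.

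The one genuinely delicate point is the copula bookkeeping of the last paragraph: one must confirm that both marginal blocks of $G_\xi$ really induce the copula $C$, so that the centering constants of the covariance coincide with those of the variance and collapse into the single term $4(\int C\,\diff C)^2$, and that the survival-copula pieces are handled consistently through $\int\bar C\,\diff C=\int C\,\diff C$. Everything else is routine substitution, and no moment or smoothness assumption beyond Condition~\ref{cond:ser_dep} enters, precisely because the Kendall kernel is bounded and scale-invariant.
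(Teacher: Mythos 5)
Your proposal is correct and follows exactly the paper's route: the paper's proof is precisely ``apply Theorem~\ref{thm:U_conv} to the kernel of Example~\ref{ex:kernels}(5); a short calculation yields the variance formulas,'' and you have supplied that calculation accurately, including the identification $h_1(\bm z)=2\{C(\bm u)+\bar C(\bm u)\}-1-\vartheta$, the identity $\int \bar C\,\diff C=\int C\,\diff C$, and the verification via $L_\xi(\bm x,\bm 0)=L(\bm x)$ that both $d$-dimensional margins of $G_\xi$ carry the copula $C$. No gaps.
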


For the case $C = \Pi,$ where $\Pi$ denotes the independence copula, one can show that $\sigma^2_{\dbl} = 4/9, \sigma^2_{\sbl} = 32 (7/12 - 2\log(4/3))$ resulting in $\sigma^2_{\dbl}/\sigma^2_{\sbl} \approx 1.7428$, which has also been validated in a simulation experiment.

\section{Extensions to piecewise stationarity}
\label{sec:ext}

Environmental data typically involve different forms of non-stationarity. A particular source is seasonality, which may statistically be approached by restricting attention to seasons rather than years, bearing in mind that the inner-season variability should be approximately stationary. This idea may be approached mathematically by working with data satisfying the following assumption taken form \cite{BucZan23}.

\begin{condition}[Piecewise stationary observation scheme] \label{cond:s2}
For sample size $n\in\N$, we have observations $\bm{X}_{n,1}, \dots,  \bm{X}_{n,n}$ taking values in $\R^d.$ Moreover, 
for some block length sequence $(r_n)_n \subset \N$ diverging to infinity such that $r_n=o(n)$, we have 
\begin{multline*}
(\bm{X}_{n, 1}, \dots, \bm{X}_{n,n}) = (\bm{Y}_{1,1}, \dots, \bm{Y}_{1,r_n}, 
\bm{Y}_{2,1}, \dots, \bm{Y}_{2, r_n}, \dots \\
\dots, 
\bm{Y}_{\ndb, 1}, \dots, \bm{Y}_{\ndb, r_n}, \bm{Y}_{\ndb +1, 1}, \dots, \bm{Y}_{\ndb+1, n-\ndb r_n}),
\end{multline*}
where $\ndb =\lfloor n/r_n \rfloor$ and where $(\bm{Y}_{1,t})_t, (\bm{Y}_{2, t})_t, \dots$ denote i.i.d.\ copies from a stationary time series satisfying Condition~\ref{cond:mda} with continuous marginal c.d.f.\ $F$. Note that $\bm{Y}_{j,t}$ should be regarded as the $t$-th observation in the $j$-th season.
\end{condition}
We refer to \cite{BucZan23} for further discussions of Condition~\ref{cond:s2}, see in particular Remark 2.3.
For the rest of this section, we tacitly assume Condition \ref{cond:s2} and write $\bm{X}_j := \bm{X}_{n, j}$ for simplicity. Note that the triangular array $(\bm{X}_n)_n$ is $r_n$ dependent, which in fact simplifies the analysis of the disjoint block maxima method. For the sliding block maxima method however, mathematical challenges arise from the fact that the sliding block maxima sample is typically non-stationary. Indeed, for $x\in\R^d$, generally
\begin{align*}
\Pm\left( \bm{M}_{r,1} \leq x \right) 
&\neq \Pm \left( \bm{X}_2, \ldots, \bm{X}_r \leq \bm{x} \right) \cdot \Pm \left( \bm{X}_{r+1} \leq \bm{x} \right) 
= 
\Pm \left( \bm{M}_{r,2} \leq \bm{x} \right).
\end{align*}
In \cite{BucZan23}, Lemma 2.4, it is shown that this non-stationarity disappears asymptotically, which suggests that statistical methodology derived under stationarity assumptions (as in Section~\ref{sec:asy}) may also be applicable under Condition~\ref{cond:s2}. For deriving respective limit results, some modifications of the previous conditions are necessary.
First of all, the integrability conditions from Condition \ref{cond:h_int_std} take the following, slightly more involved form.

\begin{condition}\label{cond:h_int_S2}
There exists a $\nu > 2/\omega$ with $\omega$ from Condition \ref{cond:ser_dep} such that
\begin{compactenum}
\item[(a)] $\limsup_{r\to\infty} \sup_{1 \leq i \leq j \leq r} \int \int \abs{h(\bm{x}, \bm{y})}^{2+\nu} \on{d}\!\mathbb{P}_{\bm{Z}_{r,i}}(\bm{x}) \on{d}\!\mathbb{P}_{\bm{Z}_{r,j}}(\bm{y}) < \infty,$
\item[(b)] $\limsup_{r\to\infty} \sup_{1 \leq i \leq j \leq r} \Exp[\abs{h(\bm{Z}_{r,i},\bm{Z}_{r,j})}^{2+\nu}] < \infty$.
\end{compactenum}   
\end{condition}

It is worth noting that, if there exist monotone functions $g_1, g_2$ such that $|h(x,y)| \leq |g_1(x)| + |g_2(y)|,$ the inner supremum may be omitted; examples can be found in Section~\ref{sec:examples}.

Next, we quantify the average non-stationarity for the sliding block maxima. For $i,j \in \{1, \dots, r\}$, let 
\begin{align}\label{eq:thetaij}
\vartheta_{r,i,j} &:= \Exp[ h(\bm{Z}_{r,i}, \tilde{\bm{Z}}_{r,j}) ],
\qquad
\bar{\vartheta}_r := \frac{1}{r^2} \sum_{1 \leq i, j \leq r} \vartheta_{r,i,j},
\end{align}
where $(\tilde{\bm Z}_{r,j})_{j=1, \dots, r}$ is an independent copy of $(\bm Z_{r,j})_{j=1, \dots, r}$. Note that $\vartheta_{r,1,1} = \vartheta_r$ with $\vartheta_r$ from \eqref{eq:varthetar}, while $\vartheta_{r,i,j} \ne \vartheta_{r}$ in general. We do however have $\bar{\vartheta}_r = \vartheta_r + o(1)$ under the previous conditions (see also Lemma B.5 and B.6 in \citealp{BucZan23} for similar results).

\begin{lemma}\label{lem:unb_s2}
Suppose Conditions \ref{cond:ker_traf}, \ref{cond:s2} and \ref{cond:h_int_S2}(a),(b) are met and that $h$ is $\dlambda^{2d}$-a.e.\ continuous. Then, for $n \to \infty$, 
\[
\Exp[U_{n,r,Z}^{\sbl}] = \bar \vartheta_r + O(m^{-1}) , \quad 
\bar{\vartheta}_r =  \vartheta_r + o(1).
\]
\end{lemma}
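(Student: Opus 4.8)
The plan is to compute $\Exp[U_{n,r,Z}^{\sbl}]$ by exploiting the $r$-dependence structure induced by Condition~\ref{cond:s2}. Writing each window index $i$ as $i=(c(i)-1)r+\phi(i)$ with season index $c(i)$ and phase $\phi(i)\in\{1,\dots,r\}$, the rescaled sliding maximum $\bm Z_{r,i}$ is a function of the observations in seasons $c(i)$ and $c(i)+1$ only, and its law depends on $i$ solely through $\phi(i)$. Consequently, two windows whose season indices differ by at least two are genuinely independent, and for such a pair one has $\Exp[h(\bm Z_{r,i},\bm Z_{r,j})]=\vartheta_{r,\phi(i),\phi(j)}$ exactly, since $\bm Z_{r,i}$ and $\bm Z_{r,j}$ then have the laws of $\bm Z_{r,\phi(i)}$ and of an independent copy of $\bm Z_{r,\phi(j)}$.

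First I would split $\Exp[U_{n,r,Z}^{\sbl}]=\binom{\nsb}{2}^{-1}\sum_{(i,j)\in\Jsb}\Exp[h(\bm Z_{r,i},\bm Z_{r,j})]$ into the independent pairs ($c(j)\ge c(i)+2$) and the remaining dependent pairs ($c(j)\in\{c(i),c(i)+1\}$). For the dependent pairs there are only $O(nr)$ of them (each $i$ has at most $2r$ such partners), so that, once a uniform bound $\sup\Exp[|h(\bm Z_{r,i},\bm Z_{r,j})|]=O(1)$ is in force, their normalized contribution is $\binom{\nsb}{2}^{-1}O(nr)=O(r/n)=O(m^{-1})$. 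For the independent pairs, a combinatorial count shows that every phase pair $(a,b)\in\{1,\dots,r\}^2$ is attained by $N_{a,b}=\binom{m}{2}+O(m)$ pairs, uniformly in $(a,b)$, whence $\sum_{\mathrm{indep}}\Exp[h]=\binom{m}{2}\sum_{a,b}\vartheta_{r,a,b}+O\!\big(mr^2\sup_{a,b}|\vartheta_{r,a,b}|\big)=\binom{m}{2}r^2\bar\vartheta_r+O(mr^2)$. Dividing by $\binom{\nsb}{2}=\tfrac{n^2}{2}(1+O(r/n))$ and using $mr=n+O(r)$ gives $\bar\vartheta_r(1+O(m^{-1}))+O(m^{-1})=\bar\vartheta_r+O(m^{-1})$, which together with the dependent-pair bound yields the first assertion.

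For the second assertion $\bar\vartheta_r=\vartheta_r+o(1)$, I would show that both quantities converge to $\vartheta=\Exp[h(\bm Z,\tilde{\bm Z})]$. That $\vartheta_r=\vartheta_{r,1,1}\to\vartheta$ follows from $\bm Z_{r,1}\wconv\bm Z$ together with the uniform integrability supplied by Condition~\ref{cond:h_int_S2}(a) and the $\dlambda^{2d}$-a.e.\ continuity of $h$, so that $h(\bm Z_{r,1},\tilde{\bm Z}_{r,1})\wconv h(\bm Z,\tilde{\bm Z})$ and the means converge. For the average I would invoke the phase-wise weak convergence $\bm Z_{r,\lceil ru\rceil}\wconv\bm Z$ for every fixed proportion $u\in(0,1)$ (the componentwise maximum of the two independent partial-block maxima re-aggregates to the full-block limit by max-stability; this is the analogue of Lemma~2.4 in \cite{BucZan23}), which gives $\vartheta_{r,\lceil ru\rceil,\lceil rv\rceil}\to\vartheta$ pointwise in $(u,v)\in(0,1)^2$. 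Since the $\vartheta_{r,i,j}$ are uniformly bounded by Condition~\ref{cond:h_int_S2}(a), reading $\bar\vartheta_r=r^{-2}\sum_{i,j}\vartheta_{r,i,j}$ as a Riemann sum and applying dominated convergence over the unit square yields $\bar\vartheta_r\to\vartheta$, whence $\bar\vartheta_r-\vartheta_r=o(1)$.

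The main obstacle I anticipate is the uniform moment control over the dependent pairs. Condition~\ref{cond:h_int_S2}(b) as stated bounds $\Exp[|h(\bm Z_{r,i},\bm Z_{r,j})|^{2+\nu}]$ only for same-season indices $1\le i\le j\le r$, whereas the adjacent-season pairs have joint law $(\bm Z_{r,a},\bm Z_{r,r+b})$ with one index exceeding $r$; these are dependent through the shared middle season and are not literally covered. Handling them requires re-expressing their joint law through the season/phase structure and bounding the associated moments by the same arguments (or, in the presence of a domination $|h(x,y)|\le|g_1(x)|+|g_2(y)|$ as in the remark following Condition~\ref{cond:h_int_S2}, by the marginal moments alone), along the lines of Lemmas~B.5 and~B.6 in \cite{BucZan23}. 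The combinatorial verification that $N_{a,b}=\binom{m}{2}+O(m)$ uniformly, including the boundary effects from the final incomplete season and the truncation $i,j\le\nsb$, is routine but must be carried out carefully to keep the error at the claimed order $O(m^{-1})$.
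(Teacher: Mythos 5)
Your proposal matches the paper's proof in all essentials: the same split of index pairs into far pairs (independent across seasons, so that $\Exp[h(\bm Z_{r,i},\bm Z_{r,j})]$ reduces to $\vartheta_{r,\phi(i),\phi(j)}$ by piecewise stationarity, with the combinatorial count giving $\bar\vartheta_r$ up to $O(m^{-1})$) and near pairs (of cardinality $O(nr)$, hence contributing $O(m^{-1})$ after normalization), together with the same Riemann-sum/dominated-convergence argument showing $\bar\vartheta_r\to\vartheta$ and $\vartheta_r\to\vartheta$ via the phase-wise weak convergence of Lemma~\ref{lem:as_stat_Zrxi}. The indexing issue you flag in Condition~\ref{cond:h_int_S2}(b) is a fair observation about the literal range $1\le i\le j\le r$, but the paper treats the adjacent-season pairs exactly as you propose --- by reducing their joint law through the season/phase structure (and, in the examples, via the monotone-domination remark) --- so it does not alter the argument.
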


This result suggests that the non-stationarity of the sliding block maxima method under Condition~\ref{cond:s2} may show up in the asymptotic bias of the U-statistic $U_{n,r}^{\sbl}$. The following assumption requires $r$ to be 
sufficiently large to make this bias negligible.

\begin{condition}[Negligibility of the bias due to non-stationarity]\label{cond:bias_S2}
The limit $D := \lim_{n \to \infty} D_n$ exists, where
\begin{equation}
D_n = \sqrt m\big (\bar{\vartheta}_r - \vartheta_r\big).
\end{equation}
\end{condition}

\begin{theorem}\label{thm:S2_U_conv} 
Within the setting of Condition \ref{cond:s2}, suppose that the block size and the underlying time series $(\bm Y_{j,t})_t$ satisfy Condition \ref{cond:ser_dep}(a),(b) and that the kernel satisfies Condition~\ref{cond:ker_traf}.  Additionally, for $\mbl=\dbl$, suppose that Condition~\ref{cond:h_int_std}(a) is met, and for $\mbl=\sbl$, suppose that Condition~\ref{cond:h_int_S2} and \ref{cond:bias_S2} are met.
Then, if $h$ is $\dlambda^{2d}$-a.e.\ continuous and bounded on compact sets, we have, for $n \to \infty$,
\begin{equation*}
\frac{\sqrt{m}}{f(\bm{a}_r, \bm{b}_r)}\left( \Unmb - \theta_r \right) \wconv 
\begin{cases}
\Norm{0, \sigma_{\dbl}^2}, & \mbl = \dbl \\
\Norm{D, \sigma_{\sbl}^2}, & \mbl = \sbl
\end{cases}
\end{equation*}
with $\sigma_{\mbl}^2$ from \eqref{eq:as_vars} satisfying $\sigma_{\sbl}^2 \le \sigma_{\dbl}^2.$ 
If, additionally, the limit $B=\lim_{n\to\infty}B_n$ with $B_n$ from \eqref{eq:bn} exists, then, again for $n \to \infty$,
\begin{equation*}
\frac{\sqrt{m}}{f(\bm{a}_r, \bm{b}_r)} \left( \Unmb  - \tilde{\vartheta}_r \right) \wconv 
\begin{cases}
\Norm{B, \sigma_{\dbl}^2}, & \mbl = \dbl, \\
\Norm{B + D, \sigma_{\sbl}^2}, & \mbl = \sbl
\end{cases}
\end{equation*}
with $\tilde \vartheta_r$ from \eqref{eq:varthetat}.
\end{theorem}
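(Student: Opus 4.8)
The plan is to reduce both statements to central limit theorems for the standardized U-statistics $U_{n,r,Z}^{\mbl}$ and then treat the disjoint and sliding cases separately, the former being a genuine simplification of Theorem~\ref{thm:U_conv} and the latter carrying the new content. Summing the location-scale identity \eqref{eq:hZ_to_hM} over all index pairs gives $\Unmb = f(\bm a_r, \bm b_r)\{U_{n,r,Z}^{\mbl} + \ell(\bm a_r, \bm b_r)\}$, and since $\theta_r = f(\bm a_r, \bm b_r)\{\vartheta_r + \ell(\bm a_r, \bm b_r)\}$ we obtain $\tfrac{\sqrt m}{f(\bm a_r, \bm b_r)}(\Unmb - \theta_r) = \sqrt m(U_{n,r,Z}^{\mbl} - \vartheta_r)$. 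Hence it suffices to prove the two limit laws for $\sqrt m(U_{n,r,Z}^{\mbl} - \vartheta_r)$. The second display then follows from the first by Slutsky's theorem: because $\tfrac{\sqrt m}{f(\bm a_r, \bm b_r)}(\theta_r - \tilde\vartheta_r) = \sqrt m(\vartheta_r - \vartheta) = B_n \to B$ by \eqref{eq:bn}, adding this deterministic shift converts the limits $\Norm{0,\sigma^2_{\dbl}}$ and $\Norm{D,\sigma^2_{\sbl}}$ into $\Norm{B,\sigma^2_{\dbl}}$ and $\Norm{B+D,\sigma^2_{\sbl}}$, as claimed.

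For $\mbl=\dbl$ the crucial structural observation is that under Condition~\ref{cond:s2} each disjoint block of size $r$ coincides exactly with one season, so the disjoint block maxima $\bm M_{r,1},\bm M_{r,r+1},\ldots$ are genuinely i.i.d.\ (not merely asymptotically), with $\Exp[U_{n,r,Z}^{\dbl}] = \vartheta_r$ exactly and hence no bias. I would therefore apply the classical Hoeffding decomposition: the degenerate part is $O_{\mathbb P}(m^{-1})$ by the moment bound in Condition~\ref{cond:h_int_std}(a) (which is implied by Condition~\ref{cond:h_int_S2}), while the Hájek projection is a sum of i.i.d.\ triangular-array terms whose variance converges to $4\Var(h_1(\bm Z))=\sigma^2_{\dbl}$ by the uniform integrability of Condition~\ref{cond:h_int_S2} together with $\bm Z_{r,1}\wconv G$. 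This step is strictly easier than its analogue in Theorem~\ref{thm:U_conv}, where only asymptotic independence of disjoint blocks was available.

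For $\mbl=\sbl$ I would first split off the bias: writing $\sqrt m(U_{n,r,Z}^{\sbl}-\vartheta_r) = \sqrt m(U_{n,r,Z}^{\sbl}-\bar\vartheta_r) + \sqrt m(\bar\vartheta_r-\vartheta_r)$, Lemma~\ref{lem:unb_s2} gives $\Exp[U_{n,r,Z}^{\sbl}] = \bar\vartheta_r + O(m^{-1})$ and Condition~\ref{cond:bias_S2} gives $\sqrt m(\bar\vartheta_r-\vartheta_r)=D_n\to D$, so it remains to establish the centered limit $\sqrt m(U_{n,r,Z}^{\sbl}-\bar\vartheta_r)\wconv\Norm{0,\sigma^2_{\sbl}}$. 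Here I would run a Hoeffding-type decomposition adapted to the non-stationary sample, isolating a (position-dependent) Hájek projection $\tfrac{2}{\nsb}\sum_i \tilde h_1(\bm Z_{r,i})$ and a degenerate remainder; the remainder is shown to be $o_{\mathbb P}(m^{-1/2})$ from the moment bounds of Condition~\ref{cond:h_int_S2} and the exact $r$-dependence of the array (observations more than $r$ apart lie in distinct, independent seasons). For the projection, the $r$-dependence is a genuine advantage: regrouping the terms season by season, $S_k := \sum_{s=1}^{r}\tilde h_1(\bm Z_{r,(k-1)r+s})$ depends only on seasons $k$ and $k+1$, so $(S_k)_k$ is a stationary $1$-dependent sequence and a classical $m$-dependent central limit theorem applies directly, modulo the negligible final partial season.

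The main obstacle is the accompanying variance computation. Unlike the stationary case, nearby sliding blocks straddle season boundaries, so the joint law of $(\bm Z_{r,i},\bm Z_{r,j})$ at lag $\lfloor\xi r\rfloor$ depends on where the boundary falls, and $\Var(S_k)$, $\Cov(S_k,S_{k+1})$ mix these configurations. The key is to prove an analogue of Lemma~\ref{lem:overlap_wconv} for the piecewise stationary scheme showing that, after summing the covariances over all admissible starting positions at each fixed lag, the season-boundary effects do not alter the limit: the averaged covariance at lag $\lfloor\xi r\rfloor$ still converges to $\Cov(h_1(\bm Z_{1,\xi}),h_1(\bm Z_{2,\xi}))$ under $G_\xi$, and a Riemann-sum argument recovers $8\int_0^1\Cov(h_1(\bm Z_{1,\xi}),h_1(\bm Z_{2,\xi}))\,\diff\xi=\sigma^2_{\sbl}$. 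Since the resulting asymptotic variances are literally the objects from \eqref{eq:as_vars}, the inequality $\sigma^2_{\sbl}\le\sigma^2_{\dbl}$ is inherited verbatim from Theorem~\ref{thm:U_conv} and requires no separate argument.
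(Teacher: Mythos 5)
Your proposal is correct and its overall architecture coincides with the paper's: reduce to $\sqrt m(U_{n,r,Z}^{\mbl}-\vartheta_r)$ via the location-scale identity, obtain the second display by adding the deterministic shift $B_n\to B$ (Slutsky), observe that the disjoint block maxima are genuinely i.i.d.\ under Condition~\ref{cond:s2} so that case is an easier rerun of Theorem~\ref{thm:U_conv}, and for sliding blocks split off the non-stationarity bias via Lemma~\ref{lem:unb_s2} and Condition~\ref{cond:bias_S2} before running a position-dependent Hoeffding decomposition with functions $h_{1,r,i}$, $h_{2,r,i,j}$ and killing the degenerate part by the $r$-dependence and the moment bounds of Condition~\ref{cond:h_int_S2}.

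The one place where you genuinely depart from the paper is the CLT for the linear part. The paper feeds $f_{r,i}=2h_{1,r,i}$ into Theorem~\ref{thm:slid_gws-ns}, a big-block/small-block construction (blocks of $(m^\ast-2)r$ and $2r$ observations) followed by Lyapunov's CLT, in which the exact independence of seasons only enters to make the big blocks independent. You instead regroup the Hájek projection season by season into $S_k=\sum_{s=1}^r \tilde h_1(\bm Z_{r,(k-1)r+s})$ and note that $(S_k)_k$ is a stationary $1$-dependent triangular array (each $S_k$ depends only on seasons $k$ and $k+1$), so a classical $m$-dependent CLT applies. This is a valid and arguably more elementary device; what it does \emph{not} buy you is any saving on the hard part, namely the identification of $\lim_n \Var(\sqrt m\, L_{n,r}^{\sbl})$ with $\sigma^2_{\sbl}$. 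You correctly flag this as the main obstacle: one still needs the joint weak convergence $(\bm Z_{r,\xi_r},\bm Z_{r,\xi'_r})\wconv G_{|\xi-\xi'|}$ in the piecewise stationary scheme together with convergence of second moments of $h_{1,r,i}(\bm Z_{r,i})$ (the paper's Lemmas~\ref{lem:as_stat_Zrxi}, \ref{lem:overlap_wconv_S2} and \ref{lem:overlap_h1_wconv_S2}, which rest on Condition~\ref{cond:h_int_S2}(a) and a Wichura/extended-continuous-mapping argument), and then the Riemann-sum passage from $r^{-2}\{\Var(A_1)+2\Cov(A_1,A_2)\}$ to $2\int_0^1\Cov(h_1(\bm Z_{1,\xi}),h_1(\bm Z_{2,\xi}))\,\mathrm{d}\xi$. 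Your sketch of that step is consistent with what the paper actually proves, and your remark that $\sigma^2_{\sbl}\le\sigma^2_{\dbl}$ is inherited from Theorem~\ref{thm:U_conv} because the limiting variances are literally the quantities in \eqref{eq:as_vars} is also how the paper disposes of the inequality.
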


\section{Simulation study}
\label{sec:sim_study}

A Monte Carlo simulation study was conducted to evaluate the finite sample performance of two selected estimators based on U-statistics: the empirical variance (univariate) as well as Kendall's $\tau$ statistic (bivariate).
The study mainly aimed at comparing the disjoint and sliding block maxima method for various extreme value indices and time series models.

\subsection{Estimating the block maxima variance} \label{subsection:sim_study_var}

In Section~\ref{subsec:var}, the empirical variance based on sliding block maxima, $\hat{\sigma}^2_{n,r, \sbl}$, was found to be an asymptotically more efficient estimator of $\sigma_{r}^2 := \Var(M_{r,1})$ than its disjoint blocks counterpart, $\hat{\sigma}^2_{n,r, \dbl}$. We assess the performance in finite-sample situations for 
data-generating processes made up from the following marginal and temporal models:

\smallskip
\noindent
\textbf{Stationary distribution of $X_t$:} We consider the generalized Pareto distribution $\on{GPD}(0,1,\gamma)$ with shape parameter $\gamma \in \{ -0.4, -0.2, 0, 0.1\}$, see \eqref{eq:gpd}. Note that the largest value of $\gamma= 0.1$ is close to the non-integrability point 0.25 for the variance estimator. 

\smallskip
\noindent
\textbf{Time series models:} In addition to the i.i.d.\ case, two time series models were considered, each with three parameter choices. The first model is the (transformed) ARMAX(1) model, see Section \ref{subsec:var}, with time series parameter $\alpha \in \{0.25, 0.5, 0.75\}$; note that the extremal index is $\theta=1-\alpha$. As the second model we chose the Cauchy AR model (CAR), defined as the stationary solution $(Y_t)_t$ of the CAR recursion
\begin{equation*}\label{eq:car}
    Y_t = \phi Y_{t-1} + W_t, \quad t \in \Z, \quad (W_t)_t \stackrel{i.i.d.}{\sim} \on{Cauchy}(0,1),
\end{equation*}
with time series parameter $\phi \in \{0.25, 0.5, 0.75\}$. This corresponds to the extremal index $\theta = 1- \phi,$ see, e.g., Problem 7.9 in \cite{KulSou20}. Realizations from the model were transformed to the $\on{GPD}(0,1,\gamma)$ distribution by setting $X_t = F_{\gamma}^{\leftarrow}(F_Y(Y_t))$, where $F_Y$ and $F_\gamma$ denote the c.d.f.\ of the Cauchy(0,1) and the GPD(0,1,$\gamma$)-distribution, respectively.

\smallskip

Combining each marginal model with each time series models results in a total of $4 \times 7=28$ different models. 
Throughout, we chose to fix the block size to $r=90$, which roughly corresponds to the number of days in the summer months and which is a common block length in environmental applications.
The number of blocks, denoted as $m$, ranged from 10 to 100, resulting in corresponding sample sizes ranging from $n=900$ to $n=9,000$ observations. 
The performance of the estimators was assessed based on approximating the MSE, the squared bias and the variance of the estimators based on $N=10,000$ simulation repetitions. For assessing the bias, the true variance $\sigma_{90}^2$ was determined in a preliminary simulation experiment involving a huge sample of size $10^6$ drawn from the distribution of $M_{r,1}$; with one such sample for each of the 28 models. 

The results for the i.i.d.\ and the ARMAX-models are illustrated in Figure~\ref{fig:sim_study_var}, where we depict the ratio $\mathrm{MSE}(\sigma_{n,r,\dbl}^2) / \mathrm{MSE}(\sigma_{n,r,\sbl}^2)$ as a function of the number of seasons (results for the CAR-model are omitted because they are qualitatively similar). 
Across all considered numbers of seasons, tail indices  and time series parameter, the sliding blocks estimator consistently outperforms its disjoint blocks counterpart. Notably, the depicted ratio is significantly larger than one for small tail indices and for small sample sizes. This particular observation is promising because obtaining large sample sizes is sometimes challenging in the area of extreme value statistics. Also, it should be noted that the serial dependence does not substantially influence the relative performance (as was to be expected from the asymptotic results). Finally, we would like to report that the estimation variance was found to be of much larger order than the bias, whence the MSE-ratio  is nearly the same as the respective variance ratio.

\begin{figure}[t!]	
	\centering
	\makebox{\includegraphics[width=1\textwidth]{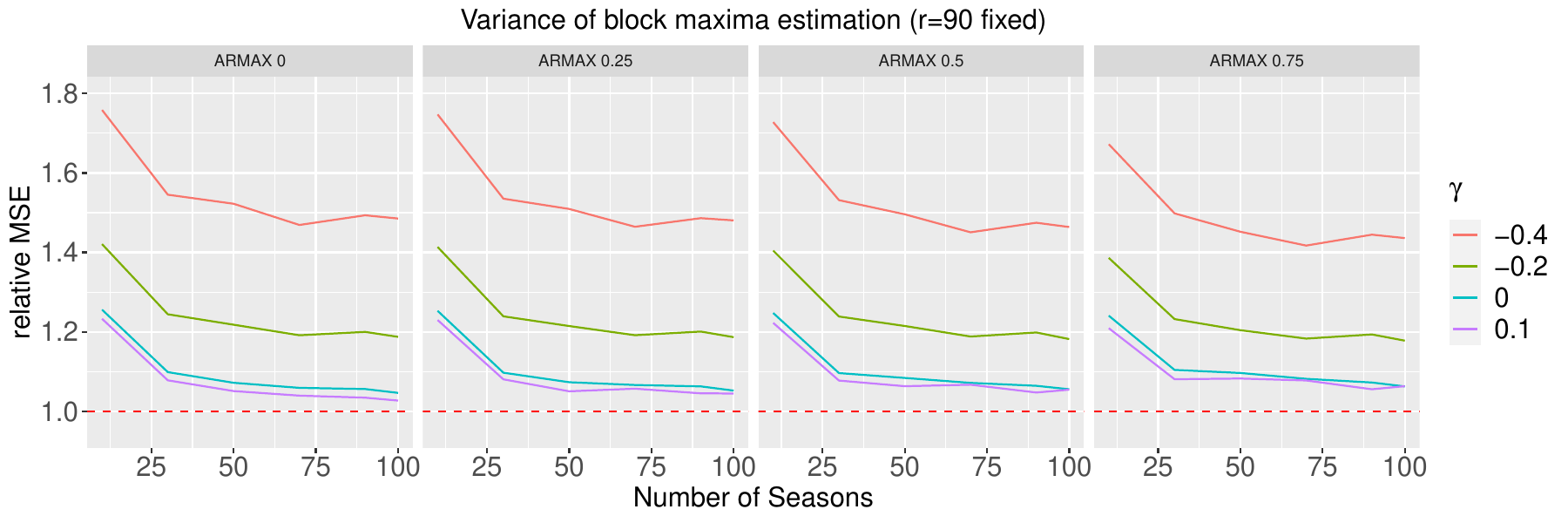}}\vspace{-.3cm}
	\caption{
For the estimation of $\sigma_r^2=\Var(M_{r,1})$, the ratio $\mathrm{MSE}(\sigma_{n,r,\dbl}^2) / \mathrm{MSE}(\sigma_{n,r,\sbl}^2)$ is depicted as a function of number of blocks $m$. 
}\label{fig:sim_study_var}
  \vspace{-.3cm}
\end{figure}

\subsection{Estimating Kendall's tau} \label{subsection:sim_study_tau}

We investigate the finite-sample performance in the bivariate case for the estimation of Kendall's $\tau= \tau_r = \tau(M_{r,1}^{(1)}, M_{r,1}^{(2)})$ based on the estimators $\hat{\tau}^{\dbl}_{n,r}$ and $\hat{\tau}^{\sbl}_{n,r}$ from~Section \ref{subsec:tau}. Note that both Kendall's $\tau$ and its estimators do not depend on the marginal distributions of $X_t^{(1)}$ and $X_t^{(2)}$ (in case they are continuous).
The data generating processes are as follows:

\smallskip
\noindent
\textbf{Time series models:} Three types of time series models were considered: bivariate versions of the ARMAX(1) and CAR(1) model from the previous section as well as i.i.d.\ observations. The bivariate ARMAX(1) model is defined as the stationary bivariate solution to the recursion equation:
\begin{equation*}
X_t^{(j)} = \max \{ \alpha X_{t-1}^{(j)}, (1-\alpha)W_t^{(j)}  \}, \quad t \in \Z, \quad j \in \{1,2\},
\end{equation*}
where $\alpha \in (0,1]$ and where $(\bm{W}_t)_t$ is an i.i.d.\ sequence with Fréchet(1)-distributed margins and with copula as specified below. Throughout, the value of $\alpha$ was fixed to 0.5; and the i.i.d.\ case is obtained for $\alpha=0$.
The bivariate CAR(1) model is defined as the stationary solution of the bivariate CAR(1) recursion
\begin{equation*}
    X_t^{(j)} = \phi X_{t-1}^{(j)} + W_t^{(j)}, \quad t \in \Z, \quad j \in \{1,2\},
\end{equation*}
where $\phi \in (0,1]$ and  where $(\bm{W}_t)_t$ is an i.i.d.\ sequence with Cauchy(1) margins and with copula as specified below.  Throughout, the value of $\phi$ was fixed to 0.5.

\smallskip
\noindent
\textbf{Copula of $\bm W_t$:} Seven different copulas were considered: the independence copula, the  Gaussian copula, the $t_\nu$-copula with $\nu=4$ degrees of freedom, and the Gumbel-Hougard copula, where the parameter of three last-named copulas was chosen in such a way that the associated value of Kendall's tau is in $\{0.3, 0.6\}$. Note that the Gaussian copula is tail independent, while the $t$- and Gumbel copula exhibit upper tail dependence. 
The upper tail dependence coefficients as a function of Kendall's tau are given by $2 \cdot t_{5}(-\sqrt{5(1-\sin(\pi \tau /2))/(1+ \sin(\pi \tau /2)}) \in \{0.23, 0.5\}$ and $2-2^{1-\tau} \in \{0.375, 0.68\}$ for the $t_4$ and Gumbel-Hougard copula, respectively; 
see \cite{Emb01}.

\smallskip

Overall, we obtain $3 \times 7=21$ different models. As in the previous section, we fix the block length to $r=90$ and vary $m$ between 10 and 100, resulting in sample sizes $n=mr$ ranging from 900 to 9,000 observations. 
The estimators are evaluated in terms of the mean squared error (MSE), the bias and the variance, based on $N=1,000$ simulation repetitions. The true value of $\tau_r$ was assessed in a preliminary simulation involving a sample of size $100,000$ from $\bm M_{r,1}$.

The results are presented in Figure \ref{fig:sim_study_ktau}, where we restrict attention to the CAR(1) model, as the performance in the other two time series is nearly identical. As in the previous section, the bias was found to be of much smaller order than the variance, whence we further restrict attention to  $\mathrm{MSE}(\tau_{n,r}^{\dbl})$ and to the MSE ratio $\mathrm{MSE}(\tau_{n,r}^{\dbl}) / \mathrm{MSE}(\sigma_{n,r}^{\sbl})$.
We observe that the sliding blocks estimator consistently outperforms the disjoint blocks counterpart. The level of dependence impacts the performance in that the estimation is more precise for higher dependence (for both estimators), and in that the advantage of the sliding blocks estimator over its disjoint blocks counterpart is highest for low levels of dependence/independence. Furthermore, as in the previous section, the sliding blocks estimator's advantage is slowly decreasing in the number of blocks.

\begin{figure}[t!]
	\centering
	\makebox{\includegraphics[width=1\textwidth]{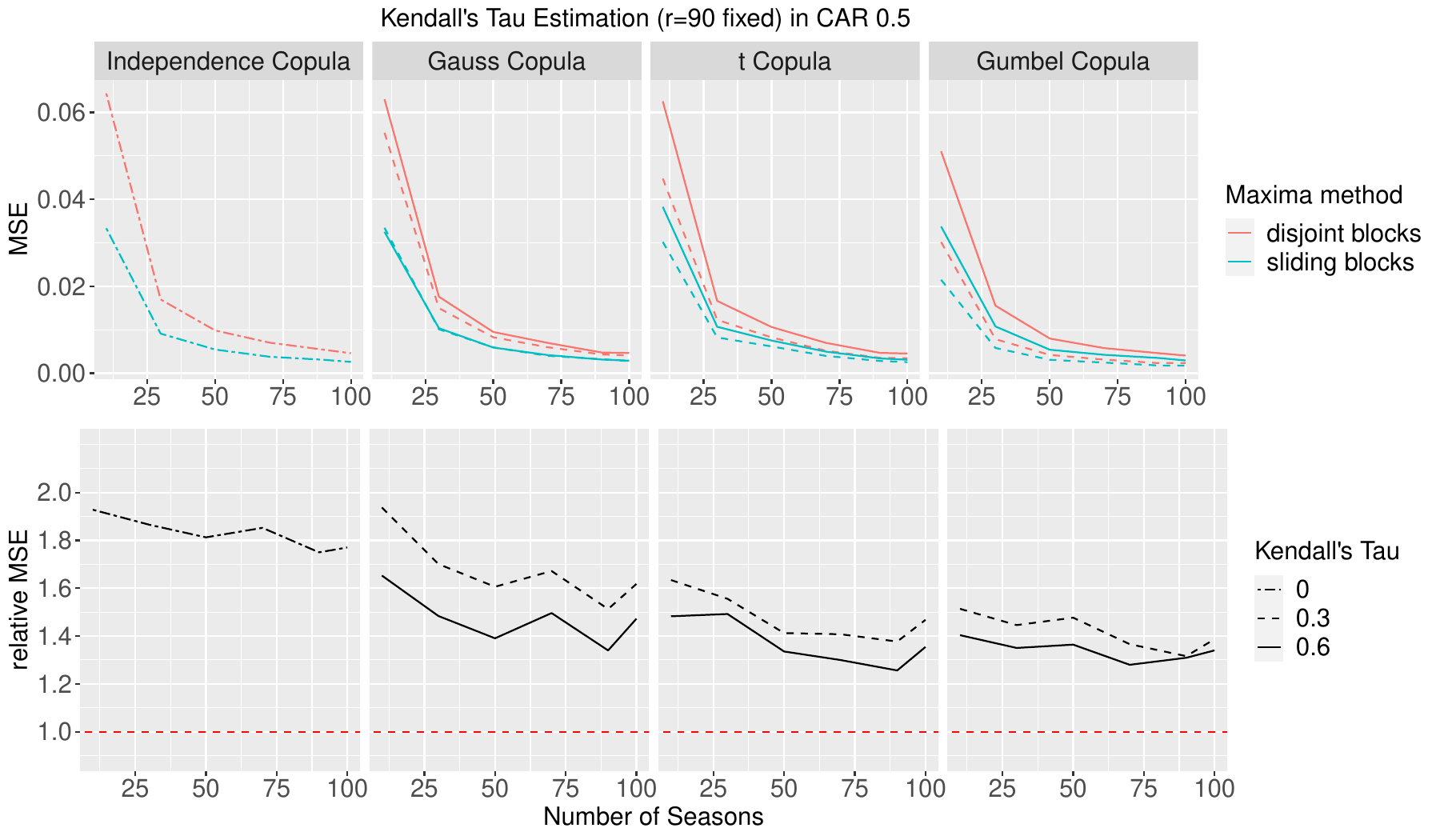}}\vspace{-.3cm}
	\caption{
MSE of $\hat \tau_{n,r}^{\mbl}$ (upper panel) and MSE ratio $\mathrm{MSE}(\tau_{n,r}^{\dbl}) / \mathrm{MSE}(\sigma_{n,r}^{\sbl})$ (lower panel) 
plotted against the number of blocks $m$.
} \label{fig:sim_study_ktau}
  \vspace{-.3cm}
\end{figure}

\section{Proofs}
\label{sec:proofs}

Recall the definitions of $\theta_r$, $\vartheta_r$, $\sigma^2_{\mbl}$ from \eqref{eq:thetar}, \eqref{eq:varthetar} and (\ref{eq:as_vars}), respectively. 
\subsection{Proofs for Section \ref{sec:asy}}
\begin{proof}[Proof of Theorem \ref{thm:U_conv}]
Recall the definition of $U_{n,r,\bm Z}^{(\mbl)}$  in \eqref{eq:u_stat_resc}. By Condition \ref{cond:ker_traf} we have 
\[
\frac{\Unmb - \theta_r}{f(\bm{a}_r, \bm{b}_r)}
 = U_{n,r,\bm Z}^{(\mbl)} - \vartheta_r. 
\]
Hence it suffices to show that
\begin{align}
&\sqrt{m} \cdot ( \tilUnmb - \vartheta_r) \wconv 
 \Norm{0, \sigma_{\mbl}^2},\label{eq:proof_u_conv_mbl} 
 \end{align}
for $\mbl \in \{\dbl, \sbl\}$ and 
\begin{align}
&\sigma_{\sbl}^2 \leq \sigma_{\dbl}^2 \label{eq:proof_sbl_better}.
\end{align}
For the proof of \eqref{eq:proof_u_conv_mbl} we will use a Hoeffding decomposition and verify weak convergence of the linear part to the normal limit and $L^2$-convergence to zero of the asymptotically degenerate part. For both parts we will employ common blocking techniques to deal with the serial dependence, (see e.g., \cite{DehPhi02}, page 31). Define 
\begin{align}
& h_{1,r} \label{eq:h1r}
\colon \R^{d} \to \R, \quad \bm x \mapsto h_{1,r}(\bm x) := \Exp[h(x,\bm{Z}_{r,1})] - \vartheta_r    \\
& h_{2,r} \nonumber
\colon \R^{2d} \to \R \quad (\bm x, \bm y) \mapsto h_{2,r}(\bm x,\bm y) := h(\bm x,\bm y) - h_{1,r}(\bm x) - h_{1,r}(\bm y) - \vartheta_r
\end{align}
and notice the algebraic identity
\begin{align}\label{eq:proof_U_Zerl}
\Unmb - \vartheta_r 
&=  \nonumber
\frac{2}{\nmb} \sum_{i \in \Imb} h_{1,r}(\bm{Z}_{r,i}) + \frac{2}{\nmb \cdot(\nmb-1)} \sum_{(i,j) \in \Jmb} h_{2,r}(\bm{Z}_{r,i}, \bm{Z}_{r,j}) \\
&\equiv 
L_{n,r}^{\mbl}
+ 
D_{n,r}^{\mbl}.
\end{align}

\medskip
\noindent
\textbf{Proof of (\ref{eq:proof_u_conv_mbl}) for mb $\mathbf =$ db:} we start by proving
\begin{equation}\label{eq:proof_u_conv_lin_part}
\sqrt m \cdot L_{n,r}^{\dbl}
=
\frac{2}{\sqrt{m}} \sum_{i \in \Idb} h_{1,r}(\bm{Z}_{r,i}) 
\wconv \Norm{0, \sigma^2_{\dbl}}.    
\end{equation} 
By Lemma \ref{lem:switch_to_indep},  we may switch to i.i.d.\ copies of $\bm{Z}_{r,i}$, denoted by $\tilde{\bm{Z}}_{r,i}$. Hence, Ljapunov's central limit theorem becomes applicable. Consider first the case $\Var(h_{1}(\bm{Z})) > 0.$ In view of the fact $h_{1,r}(\tilde{\bm{Z}}_{r,i})$ is centered, we need to check the Ljapunov condition: 
\[
\exists \delta > 0\colon \quad 
\lim_{n \to \infty} \frac{\sum_{i \in \Idb} \Exp\big[\abs{h_{1,r}(\tilde{\bm{Z}}_{r,i})}^{2+\delta}\big]}
{\left( \sum_{i \in \Idb} \Exp[h_{1,r}(\tilde{\bm{Z}}_{r,i})^2] \right)^{1+ \delta/2} }
= 0.
\]
Take $\delta = \nu$ with $\nu$ from Condition \ref{cond:h_int_std}.  By stationarity, Jensen's inequality and Con\-di\-tion~\ref{cond:h_int_std}~(a) we obtain that
$ \sum_{i \in \Idb} \Exp[\abs{h_{1,r}(\tilde{\bm{Z}})}^{2+\nu}] = O(m)$. 
Hence, using Lemma \ref{lem:h_1,r_conv} and stationarity, we get 
\begin{align*}
 \frac{\sum_{i \in \Idb} \Exp\big[\abs{h_{1,r}(\tilde{\bm{Z}}_{r,i})}^{2+\delta}\big]}
{\left( \sum_{i \in \Idb} \Exp[h_{1,r}(\tilde{\bm{Z}}_{r,i})^2] \right)^{1+ \delta/2} }
= \frac{O(m^{-\nu/2})}{\Var(h_{1,r}(\bm{Z}_{r,1}))^{1+\nu/2}} =o(1),
\end{align*}
where we used the assumption $\Var(h_{1}(\bm{Z})) > 0.$ Lemma \ref{lem:h_1,r_conv} then implies the asymptotic normality in (\ref{eq:proof_u_conv_lin_part}).

Consider now the case $\Var(h_1(\bm{Z})) = 0.$ By stationarity and independence we obtain 
$\Var(m^{-1/2} \sum_{i \in \Idb} h_{1,r}(\bm{\tilde{Z}}_{r,1})) = \Var(h_{1,r}(\bm{Z}_{r,1})) = o(1),$ where we used Lemma \ref{lem:h_1,r_conv} in the last equality. Since $h_{1,r}(\bm{Z}_{r,i})$ is centered, the weak convergence to $\Norm{0,0}$ and thus \eqref{eq:proof_u_conv_lin_part} follow.

In the next part we show that the (asymptotically) degenerate part converges to zero in $L^2$, i.e., $\Exp[(\sqrt m \cdot L_{n,r}^{\dbl})^2]=o(1)$. For that purpose, it is sufficient to show that
\begin{align}\label{eq:proof_u_conv_deg_part}
\frac{m}{\ndb^4} \sum_{\genfrac{}{}{0pt}{}{(i_1,  i_2) \in \Jdb}{(j_1, j_2) \in \Jdb}} \Exp[h_{2,r}(\bm{Z}_{r,i_1},\bm{Z}_{r,i_2}) h_{2,r}(\bm{Z}_{r,j_1},\bm{Z}_{r,j_2})]
=o(1).
\end{align}
The Cauchy-Schwarz inequality, standard inequalities for the expectation and Condition \ref{cond:h_int_std} imply that 
\begin{align*}
\sup_{\genfrac{}{}{0pt}{}{(i_1,  i_2) \in \Jdb}{(j_1, j_2) \in \Jdb}}  \big| \Exp[h_{2,r}(\bm{Z}_{r,i_1},\bm{Z}_{r,i_2}) h_{2,r}(\bm{Z}_{r,j_1},\bm{Z}_{r,j_2})] \big| 
&\le 
\sup_{s \in  \N} \Exp[h_{2,r}^2(\bm Z_{r,1}, \bm Z_{r,1+s})]  \\
&= O(1).
\end{align*}
Consider a tuple $(\bm i, \bm j) = (i_1, i_2, j_1, j_2) \in \{(i_1,i_2)\in \Jdb, (j_1, j_2) \in \Jdb\}$ such that both the distance between the smallest, $\min(\bm i, \bm j)$, and the second smallest index and the largest, $\max(\bm i, \bm j)$, and the second largest index is at most $2r$. Clearly, the cardinality of the set of all those $(\bm i, \bm j)$ is of the order $O(m^2)$, whence the expression in \eqref{eq:proof_u_conv_deg_part} with the sum restricted to those tuples is of the order $O(m^{-1})$. It is hence sufficient to consider the sum over those summands for which either the distance between the smallest index and all other indices is strictly larger than $2r$, or the distance between the largest index and all other indices is strictly larger than $2r$. We only consider the first case, as the other can be treated similarly. Without loss of generality, let $i_1$ be the smallest index, and let $\JJdb=$ denote the respective set of indices, that is, $\JJdb=\{(\bm i, \bm j) \in \Jdb \times \Jdb: i_2-i_1> 2r, j_1 -i_1> 2r\}$.

For each tuple $(\bm i, \bm j) \in \JJdb$, we may use Berbee's coupling Lemma \citep{berbee} to construct a random variable $\bm{Z}_{r,i_1}^\ast$ having the same distribution as $\bm{Z}_{r,i_1}$ that is independent of $(\bm{Z}_{r,i_2}, \bm{Z}_{r,j_1}, \bm{Z}_{r,j_2})$ and which satisfies $\mathbb P ( \bm{Z}_{r,i_1} \neq \bm{Z}_{r,i_1}^\ast ) \leq \beta(\sigma(\bm{Z}_{r,i_1}), \sigma(\bm{Z}_{r,i_2}, \bm{Z}_{r,j_1}, \bm{Z}_{r,j_2})) \leq \beta(r)$ where  $\sigma(X)$ denotes the initial $\sigma$-field of $X.$ Now decompose 
\begin{align*}
 &\phantom{{}={}} \Exp[h_{2,r}(\bm{Z}_{r,i_1},\bm{Z}_{r,i_2}) h_{2,r}(\bm{Z}_{r,j_1},\bm{Z}_{r,j_2})]  \\
&= \Exp[\left \{  h_{2,r}(\bm{Z}_{r,i_1},\bm{Z}_{r,i_2}) - h_{2,r}(\bm{Z}^\ast_{r,i_1},\bm{Z}_{r,i_2})  \right\} h_{2,r} (\bm{Z}_{r,j_1},\bm{Z}_{r,j_2})] \\
& \hspace{2cm} + \Exp[h_{2,r}(\bm{Z}^\ast_{r,i_1},\bm{Z}_{r,i_2}) h_{2,r} (\bm{Z}_{r,j_1},\bm{Z}_{r,j_2})] =: I^{(\bm{i}, \bm{j})}_{1,n} + I^{(\bm{i}, \bm{j})}_{2,n}.
\end{align*}
Using stationarity, basic properties of the conditional expectation and the properties of $\bm{Z}^\ast_{r,i_1}$ we obtain, via conditioning on $(\bm{Z}_{r,i_2}, \bm{Z}_{r,j_1}, \bm{Z}_{r,j_2})$, that $I^{(\bm{i}, \bm{j})}_{2,n} \equiv 0.$ 

Next, repeated applications of Hölder's inequality imply that, uniformly in $(\bm i, \bm j) \in \JJdb$,
\begin{align*}
\abs{I^{(\bm{i}, \bm{j})}_{1,n}} 
&\leq \Exp\big[\ind{\bm{Z}_{r,i_1} \neq \bm{Z}_{r,i_1}^\ast} \big| \big \{ h_{2,r}(\bm{Z}_{r,i_1},\bm{Z}_{r,i_2}) - h_{2,r}(\bm{Z}^\ast_{r,i_1},\bm{Z}_{r,i_2})  \big\}  \\ & \hspace{7cm} \times h_{2,r} (\bm{Z}_{r,j_1},\bm{Z}_{r,j_2})\big|\big] \\
&\lesssim  \beta(r)^{\nu/(2+\nu)}, 
\end{align*}
where we have used Condition \ref{cond:h_int_std}. Overall,
\begin{align*}
\frac{m}{\ndb^4} \sum_{(\bm i, \bm j) \in \JJdb} \abs{\Exp[h_{2,r}(\bm{Z}_{r,i_1},\bm{Z}_{r,i_2}) h_{2,r}(\bm{Z}_{r,j_1},\bm{Z}_{r,j_2})]} 
&\lesssim  
m \cdot  \sup_{(\bm i, \bm j) \in I} \abs{I^{(\bm{i}, \bm{j})}_{1,n} + I^{(\bm{i}, \bm{j})}_{2,n}} \\
&\lesssim 
\Big( m^{1+2/\nu} \beta(r) \Big)^{\nu/(2+\nu)}
\end{align*}
which converges to zero by 
Condition \ref{cond:ser_dep} (c) as $2/\nu < \omega$. This implies (\ref{eq:proof_u_conv_deg_part}), and in combination with \eqref{eq:proof_U_Zerl} and (\ref{eq:proof_u_conv_lin_part}) we obtain (\ref{eq:proof_u_conv_mbl}). 

\medskip
\noindent
\textbf{Proof of (\ref{eq:proof_u_conv_mbl}) for mb $\mathbf =$ sb:} 
In order to show that the degenerate part of the rescaled sliding blocks U-statistic converges to zero, it is sufficient to show that
\[
\frac{m}{\nsb^4} \sum_{\genfrac{}{}{0pt}{}{(i_1,  i_2) \in \Jsb}{(j_1, j_2) \in \Jsb}} \Exp[h_{2,r}(\bm{Z}_{r,i_1},\bm{Z}_{r,i_2}) h_{2,r}(\bm{Z}_{r,j_1},\bm{Z}_{r,j_2})]
\to 0.
\]
This can be worked out analogously to the disjoint case: again, we may restrict the sum in the upper display to tuples in $\Jsb = \{(\bm i, \bm j) \in \Jsb \times \Jsb: i_2-i_1> 2r, j_1 -i_1> 2r\},$ as the set of the remaining tuples is of the order $O((nr)^2)$. We can then copy the disjoint blocks proof verbatim by replacing $\JJdb$ and $\ndb$ with $\JJsb$ and $\nsb$.

It remains to show 
\begin{equation*}
\frac{2\sqrt{m}}{n} \sum_{i \in \Isb} h_{1,r}(\bm{Z}_{r,i}) 
\wconv \Norm{0, \sigma^2_{\sbl}}.  
\end{equation*}
For this purpose use Theorem \ref{thm:slid_gws} with $f_{r,s} := h_{1,r}, \, f := h_1$ and note that all conditions are satisfied,  where we use Lemma \ref{lem:overlap_wconv} and an easy adaptation of Lemma B.15 in \cite{BucZan23} to obtain the weak convergence condition in \eqref{eq:wl2}.

\medskip
\noindent
\textbf{Proof of (\ref{eq:proof_sbl_better}):} 
The inequality follows from Lemma A.10 in \cite{ZouVolBuc21}, where $X_{n,i} := h_{1,r}(\bm{Z}_{r,i})$ and the preconditions of Lemma A.10 can be deduced from Condition \ref{cond:ser_dep}(a), (c) and \ref{cond:h_int_std}(a). 
\end{proof}

\begin{proof}[Proof of Corollary \ref{cor:U_conv_bias}]
By Condition \ref{cond:ker_traf} and the assumption on $B_n$, we have 
\begin{align*} 
\frac{\sqrt{m}}{f(\bm{a}_r, \bm{b}_r)} \big( \theta_r - f(\bm{a}_r, \bm{b}_r) ( \vartheta + \ell(\bm{a}_r, \bm{b}_r) \big) 
= \sqrt{m} ( \vartheta_r - \vartheta) = B_n = B + o(1).
\end{align*}
Hence, the assertion follows from  Theorem \ref{thm:U_conv} and Slutsky's theorem.
\end{proof}

\subsection{Proofs for Section \ref{sec:examples}}
\label{sec:proofs_ex}

\begin{proof}[Proof of Corollary \ref{cor:var_est}]
Note that 
$|h_{\on{Var}}(x,y)|^{2+\nu/2} \leq 2^{2+\nu/2}(|x|^{4+\nu}+|y|^{4+\nu})$. Hence, by assumption, Condition \ref{cond:h_int_std} is met for a $\nu > 0$ as in the formulation of Theorem \ref{thm:U_conv}. The statement follows by the continuity of $h_{\on{Var}}$ and Example \ref{ex:kernels}.
\end{proof}

\begin{proof}[Proof of Equation \eqref{eq:var-armax}]
Fix $\gamma < 1/4$ and omit the lower index 1 everywhere; e.g., write $Z_{r}$ instead of $Z_{r,1}$. We need to verify the conditions of Corollary \ref{cor:var_est}. 

We start by proving Assumption \ref{cond:mda},  for which we restrict attention to the case $\gamma > 0$ since the other cases can be treated similarly. Using $F_W^{\leftarrow}(F_\gamma(t)) = -1/\log \{ 1-(1+\gamma t)^{-1/\gamma} \}$ for $t > 0 $ and equation (10.5) from \cite{BeiGoeSegTeu04} we have 
\begin{align*}
\Pm\left( \frac{M_{r}-b_r}{a_r}  \le x \right)
= \,&\exp \left[ - \frac{1+(1-\alpha)(r-1)}{F_W^{\leftarrow}(F_\gamma(a_rx+b_r))} \right] \\
= \,& \exp\left[r(1-\alpha) \log\{ 1-(1+\gamma(a_rx+b_r))^{-1/\gamma} \} \right]+o(1) \\
= \,& \exp\left[ -r(1-\alpha) (1+\gamma(a_rx+b_r))^{-1/\gamma} \right] + o(1) \\
= \,& G_\gamma(x) +o(1),
\end{align*}
where we substituted $a_r = (r(1-\alpha))^\gamma, b_r= \lbrace (r(1-\alpha))^\gamma -1 \rbrace /\gamma.$ Hence Assumption \ref{cond:mda} is satisfied. 

Condition~\ref{cond:ser_dep}(a) holds by assumption. Conditions (b) and (c) hold since $r = o(n^3)$ and since there exists $c>0$ with $\alpha(k) \leq \beta(k) \leq \exp(-ck)$ for $k \in \N$ by the discussion in Section~\ref{subsec:var}. Note that (c) does hold for all $\omega > 0.$

In view of the latter statement, 
it remains to prove 
$\limsup_r \Exp[|Z_{r}|^{4+\nu}] < \infty$ for some $\nu > 0$. Note that this in turn is equivalent to $\limsup_r \Exp[|Z'_{r}|^{4+\nu}] < \infty$, where $Z'_{r} := (M_r - {b}'_r)/{a}'_r$ and where ${b}'_r \in \R , {a}'_r > 0$ are sequences with ${Z}'_{r} \rightsquigarrow \GEV{\mu, \sigma, \gamma}$ for some $\mu \in \R, \sigma >0$.

Define ${a}'_r := r^\gamma$ and ${b}'_r := (r^\gamma-1)/\gamma$, where the latter is defined by continuity as ${b}'_r= \log r$ if $\gamma = 0.$ The p.d.f.\ of ${Z}'_r$ is then given by 
\[
f_{{Z}'_r}(t) = (1-\alpha + \alpha/r) \cdot 
\begin{cases}
 (1+\gamma t)^{-(1+1/\gamma)} \left( 1- \frac{(1+t\gamma)^{-1/\gamma}}{r} \right)^{r(1-\alpha)+\alpha-1}, & \gamma \neq 0 \\
 e^{-t} \left( 1-\frac{e^{-t}}{r} \right) ^{r(1-\alpha)+\alpha -1}, & \gamma = 0
\end{cases}
\]
for $t \in \on{supp}(\tilde{Z}_r).$ We will only present the case $\gamma > 0$ as the other cases use similar ideas. Substituting $1+t\gamma$, we obtain
\begin{align*}
&\Exp[|\tilde{Z}_r|^{4+\nu}] \\
=&\, \frac{1-\alpha + \alpha/r}{\gamma} \int_{1/r^\gamma}^\infty \Big(\frac{|t-1|}{\gamma}\Big)^{4+\nu} t^{-1-1/\gamma} \Big(1-\frac{t^{-1/\gamma}}{r}\Big)^{r(1-\alpha)+\alpha-1} \on{d}\!t \\ 
\leq &\, \frac{1-\alpha + \alpha/r}{\gamma^{5+\nu}} \Bigg \lbrace \int_{1/r^\gamma}^{1/2} t^{-1-1/\gamma} \Big(1-\frac{t^{-1/\gamma}}{r}\Big)^{r(1-\alpha)+\alpha-1} \on{d}\!t   \\ 
& \hspace{3cm} +\int_{1/2}^\infty t^{3+\nu - 1/\gamma} \Big(1-\frac{t^{-1/\gamma}}{r}\Big)^{r(1-\alpha)+\alpha-1} \on{d}\!t \Bigg \rbrace 
=: I_{r1} + I_{r2}
\end{align*}
By the monotone convergence theorem the first integral converges to $\int_0^{1/2} t^{-1-1/\gamma} \exp(-(1-\alpha)t^{-1/\gamma}) \on{d}\!t < \infty$; hence $\lim_{r\to\infty} I_{r1} < \infty.$  
Finally, let $\nu = 1/(2\gamma)-2$ and invoke the monotone convergence theorem again to obtain 
\[
\lim_{r\to\infty} I_{r2}
= \frac{1-\alpha}{\gamma^{3+\gamma/2}} \int_{1/2}^\infty t^{1-1/(2\gamma)} \exp(-(1-\alpha)t^{-1/\gamma}) \on{d}\!t < \infty
\]
as $1-1/(2\gamma) < -1$. Overall, we have shown that $\limsup_r \Exp[|Z_{r}|^{4+\nu}] < \infty$ as asserted.

Using similar ideas as before, one can show that $n = o(r^3)$ implies $\lim_{n\to\infty} B_n = 0.$
\end{proof}

\begin{proof}[Proof of Proposition \ref{ex:pwm}]
Write $h_{\on{pwm},2}=h_{\on{pwm}}$ and $\tilde h_{\on{pwm},2}=\tilde h_{\on{pwm}}$.
First of all, we have
\[
\frac{\sqrt{m}}{a_r}\big\{  U_{n,r}^{\mbl}(h_{\on{pwm}}) -  U_{n,r}^{\mbl}(\tilde{h}_{\on{pwm}})  \big\} \\
= 
S_n^{\mbl} +R_n^{\mbl} 
\]
where
\begin{align*}
S_n^{\mbl} 
&= \sqrt{m} \binom{\nmb}{2}^{-1} \sum_{\genfrac{}{}{0pt}{}{(i,j) \in \Jmb}{j-i > 2r}} \big\{ h_{\on{pwm}}(Z_{r,i}, Z_{r,j}) - \tilde{h}_{\on{pwm}}(Z_{r,i}, Z_{r,j})\big\}  \\
R_n^{\mbl} 
&= \sqrt{m}  \binom{\nmb}{2}^{-1} \sum_{\genfrac{}{}{0pt}{}{(i,j) \in \Jmb}{j-i \le 2r}} \big\{ h_{\on{pwm}}(Z_{r,i}, Z_{r,j}) - \tilde{h}_{\on{pwm}}(Z_{r,i}, Z_{r,j})\big\}.
\end{align*}
The number of summands in $R_n^{\mbl}$ is of the order $O(nr)$ for $\mbl=\sbl$ and of the order $O(m)$ for $\mbl=\dbl$, whence $R_n^{\mbl}=O_{L^2}(m^{-1/2})=o_{L^2}(1)$ by the integrability assumption.

Next, we have
\begin{align*}
S_n^{\mbl} & = \sqrt{m}\binom{\nmb}{2}^{-1}  \sum_{\genfrac{}{}{0pt}{}{(i,j) \in \Jmb}{j-i > 2r}} \frac{1}{2}\mathbf{1}(Z_{r,i} = Z_{r,j}) Z_{r,i}
\end{align*}
which is zero with probability one by the no-ties assumption; note that all indices in the sum refer to blocks that do not overlap. 

The second statement follows from Corollary \ref{cor:U_conv_bias}, applied to $U_{n,r}^{\mbl}(\tilde h_{\on{pwm}})$. Finally, the inequality for the asymptotic variances can be found in \cite{BucZan23}.
\end{proof}

\begin{proof}[Proof of Proposition~\ref{ex:kend_tau}]
Recall Example \ref{ex:kernels}(5) and apply Theorem \ref{thm:U_conv}. A short calculation yields the formulas for the asymptotic variances.
\end{proof}

\subsection{Proofs for Section \ref{sec:ext}}

\begin{proof}[Proof of Lemma \ref{lem:unb_s2}]
For $\xi \in (0,1)$, let $\xi_r=1+\lfloor r\xi \rfloor$. Then,
\begin{align*}
\bar \vartheta_r 
=
\int_0^1 \int_0^1 
\Exp[ h(\bm Z_{r,\xi_r}, \tilde{\bm Z}_{r,\xi'_r}) ]
\diff \xi \diff \xi'
\end{align*}
By Lemma~\ref{lem:as_stat_Zrxi}, we have $\bm Z_{r,\xi_r} \wconv \bm Z \sim G$ for any $\xi \geq 0$. Hence, by independence and the continuous mapping theorem, $h(\bm Z_{r,\xi_r}, \tilde{\bm Z}_{r,\xi_r'}) \wconv h(\bm Z, \bm{\tilde{Z}})$. Therefore, by the previous display, dominated convergence (use Condition~\ref{cond:h_int_S2}(a)) and Example 2.21 in \cite{Van98}, we obtain $\bar \vartheta_r=\vartheta+o(1)$. This implies the second statement, since $\vartheta_r = \vartheta + o(1)$ by Lemma~\ref{lem:varthetar}.

For the first convergence assume that $n=mr$ for simplicity. We have
\begin{align*}
\Exp[U_{n,r,Z}^{\sbl}] 
&=
\frac{1}{\nsb(\nsb -1)} \sum_{\genfrac{}{}{0pt}{}{1 \leq i\neq j \leq \nsb}{j - i > 2r}} \Exp[h(\bm Z_{r,i}, \bm Z_{r,j})] + O(m^{-1}),
\end{align*}
where the $O$-term is due to leaving nearby summands out.
Next, by independence, piecewise stationarity and including nearby summands again,
\[
\sum_{\genfrac{}{}{0pt}{}{1 \leq i\neq j \leq \nsb}{j - i > 2r}} \Exp[h(\bm Z_{r,i}, \bm Z_{r,j})]
=
\sum_{\genfrac{}{}{0pt}{}{1 \leq i\neq j \leq \nsb}{j - i > 2r}} \Exp[h(\bm Z_{r,i}, \tilde{\bm Z}_{r,j})]
= m^2 \sum_{1 \leq i, j \leq r} \vartheta_{r,i,j} + O(rn).
\]
Overall,
\begin{align*}
\Exp[U_{n,r,Z}^{\sbl}] 
= 
\frac{m^2r^2}{\nsb(\nsb -1)} \bar{\vartheta}_r + O(m^{-1})
= \bar \vartheta_r + O(m^{-1}).
\end{align*}
\end{proof}

\begin{proof}[Proof of Theorem \ref{thm:S2_U_conv}]
For $\mbl=\dbl$, $(\bm Z_{r,i})_{i \in \Idb}$ is an i.i.d. sample. Thus the proof essentially is an easier version of the proof of Theorem~\ref{thm:U_conv}.

For $\mbl = \sbl$ note that, by Lemma~\ref{lem:unb_s2}, Conditions~\ref{cond:bias_S2} and \ref{cond:ker_traf}, it is sufficient to show that $\sqrt{m}(U_{n,r,Z}^{\sbl} - \Exp[U_{n,r,Z}^{\sbl}]) \wconv \Norm{0, \sigma^2_{\sbl}}.$
Note that we might replace $n-r+1$ with $n,$ since $(n-r+1)/n = 1 +O(m^{-1}).$
Unlike in the situation from Theorem~\ref{thm:U_conv}, the sliding block maxima sample is not stationary anymore, which requires a different version of the Hoeffding decomposition. For $1 \le i,j \le n$, introduce functions $h_{1,r,i} \colon \R^{d} \to \R$ and $h_{2,r,i,j} \colon \R^{2d} \to \R$ by
\begin{align*}
h_{1,r,i}(\bm x) &:= \Exp[ h \left(\bm x, \bm{Z}_{r,i} \right) ]- \vartheta_{r,i,i} \\
h_{2,r,i,j}(\bm x, \bm y) &:= h(\bm x, \bm y) - h_{1,r,i}(\bm x) - h_{1,r,j}(\bm y) - \vartheta_{r,i,j},
\end{align*}
with $\vartheta_{r,i,j}$ from \eqref{eq:thetaij}.
Note, that by Lemma \ref{lem:unb_s2}
\begin{align}\label{eq:proof_U_Zerl_S2}
&\phantom{{}={}} U_{n,r,Z}^{\sbl} - \frac{2}{n(n-1)} \sum_{1 \leq i < j \leq n}  \Exp[h(\bm{Z}_{r,i}, \bm{Z}_{r,j})] \nonumber \\ 
& = \frac{2}{n} \sum_{i=1}^n h_{1,r,i}(\bm{Z}_{r,i}) + \frac{2}{n(n-1)} \sum_{1 \leq i < j \leq n} h_{2,r,i,j}(\bm{Z}_{r,i}, \bm{Z}_{r,j}) + O(m^{-1})\\
\nonumber &\equiv 
L_{n,r}^{\sbl}
+ 
D_{n,r}^{\sbl} + O(m^{-1})
\end{align}
The asymptotic normality of $ \sqrt{m}L_{n,r}^{\sbl}$ follows from Theorem \ref{thm:slid_gws-ns} with $f_{r,i} := 2 h_{1,r,i},$ where the preconditions are met since the time series is piecewise stationary and by assumption; moreover,  \eqref{eq:wl2} is a consequence of Lemma \ref{lem:overlap_h1_wconv_S2}. We omit the proof of $\sqrt{m}D_{n,r}^{\sbl} = o_\Pm(1)$ as the proof is similar to the proof of the respective statement in the proof of Theorem \ref{thm:U_conv}.
\end{proof}

\section{Auxiliary results}

\subsection{Disjoint blocks - stationary case}

\begin{lemma}[Convergence of $\vartheta_r$] \label{lem:varthetar}
Suppose that $h$ is $\dlambda^{2d}$-a.e.\ continuous and that there exists $\nu > 0$ with $\limsup_{r\to\infty} \int \int |h(\bm x,\bm y)|^{1+\nu} \on{d}\!\mathbb{P}_{\bm{Z}_{r,1}}(\bm{x}) \on{d}\!\mathbb{P}_{\bm{Z}_{r,1}}(\bm{y}) < \infty.$ 
Then $\lim_{r \to \infty} \vartheta_r=\vartheta$ with $\vartheta_r$ and $\vartheta$ from \eqref{eq:varthetar} and \eqref{eq:vartheta}, respectively. 
\end{lemma}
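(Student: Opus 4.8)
The plan is to deduce $\vartheta_r \to \vartheta$ from the convergence of rescaled block maxima in Condition~\ref{cond:mda}, by combining a continuous mapping argument with a uniform integrability bound, exactly in the spirit of the moment-convergence result in Example~2.21 of \cite{Van98} (also used in the proof of Lemma~\ref{lem:unb_s2}). Recall that, by \eqref{eq:varthetar} and \eqref{eq:vartheta}, we may write $\vartheta_r = \Exp[h(\bm Z_{r,1}, \tilde{\bm Z}_{r,1})]$ and $\vartheta = \Exp[h(\bm Z, \tilde{\bm Z})]$, where $\tilde{\bm Z}_{r,1}$ is an independent copy of $\bm Z_{r,1}$ and where $\bm Z, \tilde{\bm Z} \sim G$ are independent. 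First I would establish joint weak convergence of the independent pairs: Condition~\ref{cond:mda} gives $\bm Z_{r,1} \wconv \bm Z$, and since $\bm Z_{r,1}$ and $\tilde{\bm Z}_{r,1}$ are independent (as are $\bm Z$ and $\tilde{\bm Z}$), the joint laws factorize, so that $(\bm Z_{r,1}, \tilde{\bm Z}_{r,1}) \wconv (\bm Z, \tilde{\bm Z})$ in $\R^{2d}$.

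Second, I would pass this through the kernel via the continuous mapping theorem. Since $h$ is $\dlambda^{2d}$-a.e.\ continuous, its discontinuity set $D_h \subset \R^{2d}$ is a Lebesgue-null set; provided the limiting product law $G \otimes G$ assigns zero mass to $D_h$, the continuous mapping theorem yields $h(\bm Z_{r,1}, \tilde{\bm Z}_{r,1}) \wconv h(\bm Z, \tilde{\bm Z})$. This null-set requirement is the single point where $\dlambda^{2d}$-a.e.\ continuity must be converted into a statement about the limit distribution, and it is the step I would treat most carefully; it holds because the margins of $G$ are the absolutely continuous laws $G_{\gamma^{(j)}}$, consistent with the convention under which the continuous mapping theorem is already invoked in the proof of Lemma~\ref{lem:unb_s2}.

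Third, the integrability hypothesis $\limsup_{r\to\infty} \int\int |h(\bm x, \bm y)|^{1+\nu} \on{d}\!\mathbb{P}_{\bm Z_{r,1}}(\bm x)\on{d}\!\mathbb{P}_{\bm Z_{r,1}}(\bm y) < \infty$ states precisely that $\limsup_r \Exp[|h(\bm Z_{r,1}, \tilde{\bm Z}_{r,1})|^{1+\nu}] < \infty$ for some $\nu > 0$, so that the family $\{h(\bm Z_{r,1}, \tilde{\bm Z}_{r,1})\}_r$ is uniformly integrable by the de la Vallée--Poussin criterion (a uniform $L^{1+\nu}$-bound suffices). Combining the weak convergence from the previous step with this uniform integrability then yields convergence of the first moments through Example~2.21 in \cite{Van98}, i.e.\ $\vartheta_r \to \vartheta$; the same bound passes to the limit to guarantee that $\vartheta$ is finite.

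The hard part will be the verification in the second step that the limiting product measure $G \otimes G$ does not charge the discontinuity set $D_h$, since $h$ is only assumed continuous Lebesgue-a.e.\ and multivariate extreme-value laws need not be absolutely continuous in general; once this is secured, the remaining ingredients (joint weak convergence by independence, uniform integrability from the $L^{1+\nu}$-bound, and the moment-convergence conclusion) are entirely routine.
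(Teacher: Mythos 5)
Your proposal is correct and is essentially the paper's own proof: the paper likewise deduces $h(\bm Z_{r,1},\tilde{\bm Z}_{r,1})\wconv h(\bm Z,\tilde{\bm Z})$ from independence and the continuous mapping theorem and then concludes via Example~2.21 in \cite{Van98} together with the uniform $L^{1+\nu}$ bound. The only difference is that you explicitly flag the requirement that $G\otimes G$ not charge the discontinuity set of $h$ --- a point the paper leaves implicit --- so you are, if anything, slightly more careful than the published argument.
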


\begin{proof}
We have $h(\bm{Z}_{r,1}, \bm{\tilde{Z}}_{r,1}) \wconv h(\bm Z, \bm{\tilde{Z}})$ by independence and the continuous mapping theorem. The assertion then follows from Example 2.21 in \cite{Van98} and the integrability assumption.
\end{proof}

Recall the definition of $h_{1,r}$ from \eqref{eq:h1r}.

\begin{lemma}[Weak convergence of $h_{1,r}(\bm Z_{r,1})$]\label{lem:h_1,r_conv}
Suppose that Condition \ref{cond:h_int_std}(a) holds and that $h$ is $\dlambda^{2d}$-a.e.\ continuous and bounded on compact sets. Then, for $r \to \infty$,
\begin{equation*}
h_{1,r}(\bm{Z}_{r,1}) \wconv
h_1(\bm{Z}).
\end{equation*} 
Moreover, for any $p < 2 + \nu$ with $p\in\N$, we have $\lim_{r\to\infty} \Exp[h^p_{1,r}(\bm{Z}_{r,1})]= \Exp[h^p_1(\bm{Z})]$.
\end{lemma}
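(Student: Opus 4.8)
The plan is to establish the weak convergence first and then obtain the moment convergence as a consequence. The starting observation is that, by Jensen's inequality applied to the inner expectation in $h_{1,r}(\bm x)=\Exp[h(\bm x,\bm{Z}_{r,1})]-\vartheta_r$,
\[
\Exp\big[|h_{1,r}(\bm{Z}_{r,1})|^{2+\nu}\big]
\lesssim
\int\!\!\int |h(\bm x,\bm y)|^{2+\nu}\,\on{d}\!\Pm_{\bm{Z}_{r,1}}(\bm x)\,\on{d}\!\Pm_{\bm{Z}_{r,1}}(\bm y)+|\vartheta_r|^{2+\nu},
\]
which is bounded uniformly in $r$ by Condition~\ref{cond:h_int_std}(a) and Lemma~\ref{lem:varthetar}. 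Thus $\{h_{1,r}(\bm{Z}_{r,1})\}_r$ is bounded in $L^{2+\nu}$, so $\{h_{1,r}(\bm{Z}_{r,1})^p\}_r$ is uniformly integrable for every integer $p<2+\nu$. Granting the weak convergence $h_{1,r}(\bm{Z}_{r,1})\wconv h_1(\bm Z)$, the second assertion will then follow by applying the continuous map $t\mapsto t^p$ and invoking Example~2.21 in \cite{Van98}.

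For the weak convergence I would write $g_r(\bm x):=\Exp[h(\bm x,\bm{Z}_{r,1})]$, $g(\bm x):=\Exp[h(\bm x,\bm Z)]$, so that $h_{1,r}=g_r-\vartheta_r$ and $h_1=g-\vartheta$; since $\vartheta_r\to\vartheta$ by Lemma~\ref{lem:varthetar}, Slutsky's theorem reduces the claim to $g_r(\bm{Z}_{r,1})\wconv g(\bm Z)$. The hard part is that $g_r$ is itself $r$-dependent, being built from the unbounded kernel $h$ and the $r$-dependent law $\Pm_{\bm{Z}_{r,1}}$, so neither the ordinary nor the extended continuous mapping theorem applies to a single fixed function; and Condition~\ref{cond:h_int_std}(a) only controls the \emph{double} integral, giving no integrability bound on the inner integral $g_r$ at a fixed point, which rules out a direct ``weak convergence plus uniform integrability'' argument for $g_r(\bm x_r)\to g(\bm x)$.

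To get around this I would truncate, setting $h^{(k)}:=(h\wedge k)\vee(-k)$, which is bounded and still $\dlambda^{2d}$-a.e.\ continuous, and write $g_r^{(k)},g^{(k)},h_{1,r}^{(k)},h_1^{(k)},\vartheta_r^{(k)},\vartheta^{(k)}$ for the corresponding objects. For each fixed $k$, boundedness of $h^{(k)}$ makes the inner integral harmless: coupling $\bm{Z}_{r,1}\wconv\bm Z$ in the integration variable (Skorohod) and using dominated convergence exactly as in the continuous-mapping step of Lemma~\ref{lem:varthetar}, one checks that $h_{1,r}^{(k)}(\bm x_r)\to h_1^{(k)}(\bm x)$ for $\Pm_{\bm Z}$-a.e.\ $\bm x$ and every $\bm x_r\to\bm x$, whence the extended continuous mapping theorem gives $h_{1,r}^{(k)}(\bm{Z}_{r,1})\wconv h_1^{(k)}(\bm Z)$. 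The truncation error is then controlled uniformly in $r$: from $|h-h^{(k)}|\le|h|\,\ind{|h|>k}\le k^{-(1+\nu)}|h|^{2+\nu}$ and $d_{\mathrm{BL}}(\mathcal L(X),\mathcal L(Y))\le\Exp|X-Y|$ for the bounded-Lipschitz metric, one gets
\[
d_{\mathrm{BL}}\big(\mathcal L(h_{1,r}(\bm{Z}_{r,1})),\,\mathcal L(h_{1,r}^{(k)}(\bm{Z}_{r,1}))\big)
\le
\frac{2}{k^{1+\nu}}\sup_r\int\!\!\int|h|^{2+\nu}\,\on{d}\!\Pm_{\bm{Z}_{r,1}}(\bm x)\,\on{d}\!\Pm_{\bm{Z}_{r,1}}(\bm y),
\]
and the analogous bound for $\mathcal L(h_1(\bm Z))$ versus $\mathcal L(h_1^{(k)}(\bm Z))$ under $\Pm_{\bm Z}\otimes\Pm_{\bm Z}$ (with $\int\!\!\int|h|^{2+\nu}\,\on{d}\!\Pm_{\bm Z}(\bm x)\,\on{d}\!\Pm_{\bm Z}(\bm y)<\infty$ obtained from Condition~\ref{cond:h_int_std}(a) by the portmanteau theorem). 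Both right-hand sides tend to $0$ as $k\to\infty$, uniformly in $r$.

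The proof is then finished by the triangle inequality in $d_{\mathrm{BL}}$: given $\varepsilon>0$, first fix $k$ so large that the two truncation terms are below $\varepsilon/3$ for all $r$, and then send $r\to\infty$ so that the middle term $d_{\mathrm{BL}}(\mathcal L(h_{1,r}^{(k)}(\bm{Z}_{r,1})),\mathcal L(h_1^{(k)}(\bm Z)))$ falls below $\varepsilon/3$. I expect the genuine obstacle to be exactly the lack of pointwise-in-$\bm x$ integrability control on $g_r$ flagged above; the truncation scheme is what converts the unboundedness of $h$ into an error that the averaged moment bound of Condition~\ref{cond:h_int_std}(a) dominates uniformly in $r$.
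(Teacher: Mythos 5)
Your proof is correct, and it reaches the conclusion by a genuinely different truncation scheme than the paper's. The paper also reduces the claim to a double limit — it invokes Wichura's theorem (Theorem 4.2 in \cite{bill_conv_pm}) — but it truncates the \emph{domain} of the inner integral, working with $T_r(b)=\int_{[-b,b]^d}h(\bm Z_{r,1},\bm y)\,\on{d}\!\Pm_{\bm Z_{r,1}}(\bm y)$: the fixed-$b$ step is handled exactly as your fixed-$k$ step (extended continuous mapping theorem applied to the continuously convergent maps $\bm x\mapsto\int_B h(\bm x,\bm y)\,\on{d}\!\Pm_{\bm Z_{r,1}}(\bm y)$), and the remainder $\Pm(|T_r(b)-T_r|>\varepsilon)$ is controlled by Markov plus Cauchy--Schwarz, which again uses only the double-integral bound of Condition~\ref{cond:h_int_std}(a). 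You instead truncate the \emph{values} of the kernel, $h^{(k)}=(h\wedge k)\vee(-k)$, and replace Wichura's theorem by an explicit $3\varepsilon$-argument in the bounded-Lipschitz metric; the elementary bound $|h-h^{(k)}|\le k^{-(1+\nu)}|h|^{2+\nu}$ then converts the truncation error into precisely the quantity that Condition~\ref{cond:h_int_std}(a) dominates uniformly in $r$. A small side benefit of your route is that the hypothesis that $h$ be bounded on compact sets is never needed (the paper uses it to bound $\sup_{\bm x\in A,\bm z\in B}h^2(\bm x,\bm z)$ in its fixed-$b$ step), since $h^{(k)}$ is globally bounded; a small cost is that your finiteness of $\int\!\!\int|h|^{2+\nu}\,\on{d}\!\Pm_{\bm Z}\on{d}\!\Pm_{\bm Z}$ is really a Fatou-along-Skorokhod argument rather than a direct portmanteau application, though this is routine. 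Both proofs share the same implicit upgrade of $\dlambda^{2d}$-a.e.\ continuity to $\Pm_{\bm Z}^{\otimes 2}$-a.e.\ continuity, and your moment-convergence part (Jensen plus Example 2.21 in \cite{Van98}) is identical to the paper's.
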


\begin{proof} Since $\vartheta_r \to \vartheta$ by Lemma~\ref{lem:varthetar},  we may assume $\vartheta_r \equiv 0$. 
We will use Wichura's Theorem \citep[Theorem 4.2]{bill_conv_pm}. Note that
\[
T_r := h_{1,r}(\bm Z_{r,1})
=
\int h(\bm{Z}_{r,1},\bm{y}) \, \dPm_{\bm{Z}_{r,1}}(\bm{y}),
\quad 
T:= h_1(\bm{Z}) = \int h(\bm{Z}, \bm{y}) \, \dPm_{\bm{Z}}(\bm{y})
\]
and define, for $B := B(b) := [-b,b]^d$ with $b\in\N$,
\begin{align*}
T_r(b) := \int_B h(\bm{Z}_{r,1}, \bm{y}) \, \dPm_{\bm{Z}_{r,1}}(\bm{y}), \quad
T(b) := \int_B h(\bm{Z}, \bm{y}) \,\dPm_{\bm{Z}}(\bm{y}).
\end{align*}

In order to show weak convergence of $T_r(b)$ to $T(b)$ we use the extended continuous mapping theorem (Theorem 1.11.1 in \cite{VanWel96}). Let $\bm{x}_r \to \bm{x} \in \R^d$ and note that the map $(\bm{x},\bm{y}) \mapsto h(\bm{x},\bm{y})\ind{\bm{y} \in B}$ is $\Pm_{\bm{Z}}^{\otimes 2}$-a.e.\ continuous. By the ordinary continuous mapping theorem we obtain weak convergence of
$h(\bm{x}_r, \bm{Z}_{r,1})\ind{\bm{Z}_{r,1} \in B}$ to $h(\bm{x},\bm{Z})\ind{\bm{Z} \in B}.$ 
Next, since there exists a compact set $A$ containing $(\bm x_r)_r$,  we have
\[
\limsup_{r\to\infty} \Exp[h^2(\bm{x}_r,\bm{Z}_{r,1}) \cdot \ind{\bm{Z}_{r,1} \in B}] 
\leq \sup_{\bm{x} \in A, \bm{z} \in B}h^2(\bm{x}, \bm{z}) < \infty,
\]
which in turn implies moment convergence of $h(\bm{x}_r, \bm{Z}_{r,1})\ind{\bm{Z}_{r,1} \in B}$. This shows continuous convergence of the mapping sequence $\bm{x} \mapsto \int_B h(\bm{x}, \bm{y}) \on{d}\!\Pm_{\bm{Z}_{r,1}}(\bm y)$, and the extended continuous mapping theorem finally implies weak convergence of $T_r(b)$ to $T(b)$ as asserted.

Next, we have weak convergence of $T(b)$ to $T$, for $b\to\infty$. Indeed, with $\tilde{\bm{Z}}$ an independent copy of $\bm{Z}$, we have 
\begin{align*}
\Exp|T-T(b)| 
\leq  
\Exp\big[ \abs{h(\bm{Z}, \tilde{\bm{Z}})} \bm 1\{ \tilde{\bm{Z}} \in B(b)\} \big]
&\leq 
\norm{h(\bm{Z}, \tilde{\bm{Z}})}_{L^2(\mathbb P)} \cdot \Pm ( \tilde {\bm{Z}} \not \in B(b) )^{1/2} \\
&=o(1)
\end{align*}
as $b\to\infty$.

We finally verify $\lim_{b\to\infty} \limsup_{r\to\infty} \Pm\left( \abs{T_r(b) - T_r} > \varepsilon \right)=0$ for any fixed $\varepsilon > 0$. Let $\tilde{\bm{Z}}_{r,1}$ be an independent copy of $\bm{Z}_{r,1}.$ Applying the Markov inequality, we have
\begin{align*}
\Pm\left( \abs{T_r(b) - T_r} > \epsilon \right) 
&\leq \Pm \left( \int_{B^c} \abs{h(\bm{Z}_{r,1}, \bm{y})} \dPm_{\bm{Z}_{r,1}}(\bm{y}) > \varepsilon \right) \\
&\leq   \varepsilon^{-1} \Exp[|h(\bm{Z}_{r,1}, \tilde{\bm{Z}}_{r,1})| \bm 1\{\tilde{\bm{Z}}_{r,1} \in B(b)^c\}].
\end{align*}
Applying the Cauchy-Schwarz inequality and taking the limit over $r$ results in the upper bound $C \cdot \Pm\left( \bm{Z}\in B(b)^c \right)^{1/2}$ by the Portmanteau Theorem and Condition~\ref{cond:h_int_std}~(a), for a constant $C$ not depending on $b$. The bound goes to 0 for $b\to\infty$ since $B(b)^c \downarrow \emptyset.$

Wichura's theorem implies weak convergence of $h_{1,r}(\bm{Z}_{r,1})$ to  $h_1(\bm{Z})$, for $r \to\infty$. The stated convergence of moments follows by Example 2.21 in \cite{Van98} using the Jensen inequality and Condition~\ref{cond:h_int_std}~(a).
\end{proof}

Let $\ell = \ell_n \in \N$ denote the sequence from Condition~\ref{cond:ser_dep}(b). We may assume that $1<\ell<r$. 
For $j\in\N$, recall that
\begin{align} \label{eq:mlj}
    \bm{M}_{r-\ell,j} = \max (\bm{X}_{j}, \ldots \bm{X}_{j+r-\ell-1}) , \quad 
    \bm{Z}_{r-\ell,j} = (\bm{M}_{r-\ell,j}-\bm{b}_{r-\ell})/\bm{a}_{r-\ell}.
\end{align}

\begin{lemma}[Weak convergence of clipped blocks]
\label{lem:joint_conv_short_blocks}
Suppose Conditions \ref{cond:ser_dep}(a) and (b) are met. Then, as $n\to\infty$, 
\begin{equation*}
(\bm{Z}_{r,1}, \bm{Z}_{r-\ell,1}) \wconv (\bm{Z}, \bm{Z}).
\end{equation*}
\end{lemma}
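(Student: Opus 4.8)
The plan is to rescale the clipped maximum by the same full-block constants as $\bm Z_{r,1}$ and to show that the resulting pair degenerates onto the diagonal. Write $\tilde{\bm Z}_{r-\ell,1} := (\bm M_{r-\ell,1} - \bm b_r)/\bm a_r$. By Condition~\ref{cond:mda} applied to the block size $r-\ell \to \infty$ we have $\bm Z_{r-\ell,1} \wconv \bm Z$. Moreover, \eqref{eq:rvscale} with $s = (r-\ell)/r \to 1$ — an interior point of $(0,\infty)$, where the regularly varying sequences converge — gives $a_{r-\ell}^{(j)}/a_r^{(j)} \to 1$ and $(b_{r-\ell}^{(j)} - b_r^{(j)})/a_r^{(j)} \to 0$ for each $j$. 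Since $\tilde Z_{r-\ell,1}^{(j)} = (a_{r-\ell}^{(j)}/a_r^{(j)}) Z_{r-\ell,1}^{(j)} + (b_{r-\ell}^{(j)} - b_r^{(j)})/a_r^{(j)}$ and $Z_{r-\ell,1}^{(j)} = O_{\mathbb P}(1)$, this yields $\tilde{\bm Z}_{r-\ell,1} - \bm Z_{r-\ell,1} \pconv \bm 0$. Hence the pairs $(\bm Z_{r,1}, \bm Z_{r-\ell,1})$ and $(\bm Z_{r,1}, \tilde{\bm Z}_{r-\ell,1})$ share the same weak limit, and it suffices to prove $(\bm Z_{r,1}, \tilde{\bm Z}_{r-\ell,1}) \wconv (\bm Z, \bm Z)$.

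Since $\bm Z_{r,1} \wconv \bm Z$ by Condition~\ref{cond:mda}, a further application of Slutsky's lemma reduces this to the componentwise convergence $Z_{r,1}^{(j)} - \tilde Z_{r-\ell,1}^{(j)} \pconv 0$. The decisive identity is $\bm M_{r,1} = \bm M_{r-\ell,1} \vee \bm N_\ell$ (componentwise maximum), where $\bm N_\ell := (\max(X_{r-\ell+1}^{(j)}, \dots, X_r^{(j)}))_{j=1}^d$ collects the maxima over the $\ell$ discarded observations. With $W^{(j)} := (N_\ell^{(j)} - b_r^{(j)})/a_r^{(j)}$ this gives
\[
0 \le Z_{r,1}^{(j)} - \tilde Z_{r-\ell,1}^{(j)} = \big( W^{(j)} - \tilde Z_{r-\ell,1}^{(j)} \big)_+ ,
\]
a quantity that is positive only on the event $\{N_\ell^{(j)} > M_{r-\ell,1}^{(j)}\}$. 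I would then establish the key claim that $W^{(j)}$ collapses onto the lower endpoint of the support of $G_{\gamma^{(j)}}$, in the precise sense that $\Pm(W^{(j)} \le x) \to 1$ for every $x$ with $G_{\gamma^{(j)}}(x) > 0$. Granting this, and using $\tilde Z_{r-\ell,1}^{(j)} \wconv Z^{(j)} \sim G_{\gamma^{(j)}}$, whose limit lies strictly above that endpoint almost surely, a routine two-sided estimate (split $\{W^{(j)} > \tilde Z_{r-\ell,1}^{(j)} + \eta\}$ according to whether $W^{(j)}$ exceeds a cutoff just above the endpoint) forces $(W^{(j)} - \tilde Z_{r-\ell,1}^{(j)})_+ \pconv 0$.

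It remains to prove the key claim, and this is where I expect the real work to lie. By stationarity $N_\ell^{(j)} \stackrel{d}{=} M_{\ell,1}^{(j)}$, so that $\Pm(W^{(j)} \le x) = \Pm(M_{\ell,1}^{(j)} \le b_r^{(j)} + a_r^{(j)} x)$, and the point is that the maximum over only $\ell = o(r)$ observations, read at the $r$-level, falls below that level with probability tending to one. One clean route is a Leadbetter-type blocking argument based on Condition~\ref{cond:ser_dep}(b): partitioning the full block into roughly $r/\ell$ asymptotically independent sub-blocks of length $\ell$ yields $\Pm(M_{\ell,1}^{(j)} \le b_r^{(j)} + a_r^{(j)} x) = \Pm(M_{r,1}^{(j)} \le b_r^{(j)} + a_r^{(j)} x)^{\ell/r}(1 + o(1)) \to 1$, since the base converges to $G_{\gamma^{(j)}}(x) \in (0,1]$ while the exponent $\ell/r \to 0$. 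Alternatively, Condition~\ref{cond:mda} along the diverging sequence $\ell$ gives $(M_{\ell,1}^{(j)} - b_\ell^{(j)})/a_\ell^{(j)} \wconv Z^{(j)}$, after which one transfers to the $r$-normalization via $W^{(j)} = (b_\ell^{(j)} - b_r^{(j)})/a_r^{(j)} + (a_\ell^{(j)}/a_r^{(j)}) (M_{\ell,1}^{(j)} - b_\ell^{(j)})/a_\ell^{(j)}$ together with \eqref{eq:rvscale}. The main obstacle is precisely this transfer: unlike the reduction in the first paragraph, it probes \eqref{eq:rvscale} in the regime $s = \ell/r \to 0$, where the relations are assumed only pointwise and the limiting behaviour depends on $\mathrm{sign}(\gamma^{(j)})$ — for $\gamma^{(j)} > 0$ the ratio $a_\ell^{(j)}/a_r^{(j)}$ stays bounded and $W^{(j)}$ tends to the finite endpoint $-1/\gamma^{(j)}$, whereas for $\gamma^{(j)} \le 0$ one must resolve a diverging ratio $a_\ell^{(j)}/a_r^{(j)}$ against a limit concentrated at the upper support endpoint in order to conclude $W^{(j)} \pconv -\infty$. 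For this reason I would carry out the blocking route, which treats all three sign cases uniformly.
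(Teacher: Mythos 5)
Your proof is correct and follows the same overall skeleton as the paper's --- both arguments split the discrepancy between $\bm Z_{r-\ell,1}$ and $\bm Z_{r,1}$ into (i) a renormalization error coming from replacing $(\bm a_{r-\ell},\bm b_{r-\ell})$ by $(\bm a_r,\bm b_r)$, handled via the regular-variation relations \eqref{eq:rvscale}, and (ii) the probabilistic fact that the last $\ell=o(r)$ observations do not affect the block maximum asymptotically. The difference lies in how (ii) is treated. The paper outsources it: it cites Lemma~B.15 of \cite{BucZan23} for $\Pm(\bm M_{r-\ell,1}=\bm M_{r,1})\to1$ and then only has to control the change of constants on that event. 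You instead write $Z_{r,1}^{(j)}-\tilde Z_{r-\ell,1}^{(j)}=(W^{(j)}-\tilde Z_{r-\ell,1}^{(j)})_+$ and prove from scratch, via a Leadbetter-type blocking argument under Condition~\ref{cond:ser_dep}(b), that $W^{(j)}$ collapses onto the lower endpoint of the support of $G_{\gamma^{(j)}}$; this is essentially a self-contained proof of the cited lemma and is the more instructive route. Two minor points. First, applying \eqref{eq:rvscale} along $s_n=(r-\ell)/r\to1$ requires the locally uniform version of that convergence; this holds by the uniform convergence theorem for regularly varying sequences, and the paper makes the identical move (citing the proof of Lemma~B.15 for ``local uniform convergence''), so you should state it as such rather than treat \eqref{eq:rvscale} as directly applicable to a moving $s$. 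Second, the relation $\Pm(M_{\ell,1}\le u_r)=\Pm(M_{r,1}\le u_r)^{\ell/r}(1+o(1))$ should not be asserted as a two-sided identity: the lower bound in the standard big-block/small-block comparison carries a term $\Pm(M_{\mathrm{gap}}>u_r)$, which is exactly the quantity you are trying to control, so the argument would be circular in that direction. Only the one-sided inequality $\Pm(M_{r,1}\le u_r)\le \Pm(M_{\ell,1}\le u_r)^{\lfloor r/(2\ell)\rfloor}+\tfrac{r}{2\ell}\,\alpha(\ell)$ is needed; it uses precisely $\tfrac{r}{\ell}\alpha(\ell)=o(1)$ from Condition~\ref{cond:ser_dep}(b) and, since $\Pm(M_{r,1}\le u_r)\to G_{\gamma^{(j)}}(x)>0$, forces $\Pm(M_{\ell,1}\le u_r)\to1$. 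With that adjustment your argument is complete.
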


\begin{proof}
Since
$(\bm{Z}_{r,1}, \bm{Z}_{r-\ell,1}) 
=  (\bm{Z}_{r,1}, \bm{Z}_{r,1}) -
(0,\bm{Z}_{r-\ell,1} - \bm{Z}_{r,1})$ and since $\bm Z_{r,1}$ converges weakly to $\bm Z$ by assumption, it suffices to show that $ \bm{Z}_{r-\ell,1} - \bm{Z}_{r,1} = o_\Pm(1)$. In particular, we may assume $d=1$ and note $(M_{r-\ell,1} - b_{r-\ell})/a_{r-\ell}$ converges weakly to $\bm Z$.

Condition \ref{cond:mda} yields local uniform convergence, see the proof of Lemma B.15 in \cite{BucZan23}, hence $a_r/a_{r-\ell} = 1+o(1)$ and $(b_{r-\ell}-b_r)/a_r = o(1)$. 
By Lemma B.15 from \cite{BucZan23} we have, for any $\varepsilon > 0$,
\[
\Pm\left(\abs{ Z_{r-\ell,1} - Z_{r,1} } \geq \varepsilon \right) = \Pm \left( \abs{ Z_{r-\ell,1} - Z_{r,1} } \geq \varepsilon, M_{r-\ell,1} = M_{r,1} \right) +o(1).
\]
Using the convergence of the rescaling sequences and that $Z_{r-\ell}$ is stochastically bounded we have
\begin{align*}
\frac{M_{r-\ell,1}-b_{r-\ell}}{a_{r-\ell}} - \frac{M_{r-\ell,1}-b_r}{a_r} 
&=  
Z_{r-\ell,1} \left( 1 - \frac{a_{r-\ell}}{a_r} \right) - \frac{b_{r-\ell} - b_r} {a_r} \\
&= 
O_\Pm(1) o(1) + o(1) = o_\Pm(1).
\end{align*}
This implies $ Z_{r-\ell,1} - Z_{r,1} = o_\Pm(1).$
\end{proof}

For the next results, let $\Delta_{r,\ell}(j) := h_{1,r}(\bm{Z}_{r,j}) - h_{1, r- \ell}(\bm{Z}_{r-\ell,j})$ for $j \in \Idb$.
Furthermore let $(\bm{\tilde X}_j, \dots, \bm{\tilde X}_{j+r-1})_{j\in \Idb}$ be i.i.d.\ copies of $(\bm{X}_j, \dots, \bm{X}_{j+r-1})_{j\in\Idb}$ 
and define  $\tilde{\bm M}_{r,j}=\max(\tilde{\bm X}_j, \dots, \tilde {\bm X}_{j+r-1})$ and $\tilde{\bm Z}_{r,j}=(\tilde{\bm M}_{r,j}-\bm b_r)/\bm a_r$ and $\tilde{\bm{M}}_{r-\ell,j}, \tilde{\bm{Z}}_{r-\ell,j}$ analogously to \eqref{eq:mlj}.  

\begin{lemma}\label{lem:block_diff_L^2_conv}
Suppose Conditions \ref{cond:ser_dep}(a), (b) and \ref{cond:h_int_std}(a) are met and that $h$ is $\dlambda^{2d}$-a.e.\ continuous. Then 
\[
\lim_{n\to\infty} \Exp\Big[ \Big\{ \frac{1}{\sqrt{m}} \sum_{j \in \Idb} \Delta_{r,\ell}(j) \Big\}^2 \Big]=0,
\qquad
\lim_{n\to\infty}
\Exp\Big[ \big| \Delta_{r,\ell}(1) \big|^p \Big] = 0
\]
for all $p \in \N$ with $p < 2+ \nu.$
\end{lemma}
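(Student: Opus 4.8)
The plan is to prove the second (pointwise) convergence first, since the first statement—the $L^2$-bound on the normalized sum—reduces to it together with a covariance estimate. For the reduction, observe that each $\Delta_{r,\ell}(j)$ is centered, because $\Exp[h_{1,r}(\bm Z_{r,1})]=\vartheta_r-\vartheta_r=0$ by \eqref{eq:varthetar} and likewise for the clipped block, so that
\[
\Exp\Big[\Big\{\tfrac{1}{\sqrt m}\sum_{j\in\Idb}\Delta_{r,\ell}(j)\Big\}^2\Big]
=\frac1m\sum_{i,j\in\Idb}\Cov\big(\Delta_{r,\ell}(i),\Delta_{r,\ell}(j)\big).
\]
I would split this double sum according to the distance between the two disjoint blocks. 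The $m$ diagonal terms and the $O(m)$ adjacent-block pairs (whose blocks abut without a time gap) are each bounded in absolute value by $\Var(\Delta_{r,\ell}(1))$ via Cauchy--Schwarz, contributing $O(\Var(\Delta_{r,\ell}(1)))=o(1)$ once the second statement is available with $p=2$. For the remaining $O(m^2)$ pairs, whose blocks are separated by a gap of at least $r$, I would use Berbee's coupling \citep{berbee} exactly as in the proof of Theorem~\ref{thm:U_conv} to replace one factor by an independent copy, and then Hölder's inequality together with the uniform $(2+\nu)$-moment bound established below to get $|\Cov|\lesssim\beta(r)^{\nu/(2+\nu)}$. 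Their total contribution is of order $m\,\beta(r)^{\nu/(2+\nu)}=\big(m^{1+2/\nu}\beta(r)\big)^{\nu/(2+\nu)}$, which vanishes by Condition~\ref{cond:ser_dep}(c), since $2/\nu<\omega$ and $m\le n/r$ give $m^{1+2/\nu}\beta(r)\le(n/r)^{1+\omega}\beta(r)\to0$. This settles the first statement modulo the second.

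For the second statement the key step is the $L^1$-convergence $\Exp|\Delta_{r,\ell}(1)|\to0$, and the crucial device is to couple the two integration laws hidden inside $h_{1,r}$ and $h_{1,r-\ell}$. Let $(\tilde{\bm Z}_{r,1},\tilde{\bm Z}_{r-\ell,1})$ be an independent copy of the \emph{pair} $(\bm Z_{r,1},\bm Z_{r-\ell,1})$, and write $g_s(\bm x):=\int h(\bm x,\bm y)\,\dPm_{\bm Z_{s,1}}(\bm y)$. Recalling \eqref{eq:h1r} and using that the marginals of the tilde pair carry the laws of $\bm Z_{r,1}$ and $\bm Z_{r-\ell,1}$, one obtains the conditional representation
\[
g_r(\bm Z_{r,1})-g_{r-\ell}(\bm Z_{r-\ell,1})
=\Exp\big[\,D_r\,\big|\,\bm Z_{r,1},\bm Z_{r-\ell,1}\,\big],
\qquad
D_r:=h(\bm Z_{r,1},\tilde{\bm Z}_{r,1})-h(\bm Z_{r-\ell,1},\tilde{\bm Z}_{r-\ell,1}).
\]
Since $\Delta_{r,\ell}(1)=\{g_r(\bm Z_{r,1})-g_{r-\ell}(\bm Z_{r-\ell,1})\}-(\vartheta_r-\vartheta_{r-\ell})$ with $\vartheta_r-\vartheta_{r-\ell}\to0$ by Lemma~\ref{lem:varthetar}, conditional Jensen yields $\Exp|\Delta_{r,\ell}(1)|\le\Exp|D_r|+o(1)$. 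Now Lemma~\ref{lem:joint_conv_short_blocks} gives $(\bm Z_{r,1},\bm Z_{r-\ell,1})\wconv(\bm Z,\bm Z)$ and, applied to the independent tilde pair, $(\tilde{\bm Z}_{r,1},\tilde{\bm Z}_{r-\ell,1})\wconv(\tilde{\bm Z},\tilde{\bm Z})$ with $\bm Z,\tilde{\bm Z}\sim G$ independent; joint convergence of the resulting four-vector forces both $(\bm Z_{r,1},\tilde{\bm Z}_{r,1})$ and $(\bm Z_{r-\ell,1},\tilde{\bm Z}_{r-\ell,1})$ to the \emph{same} limit $(\bm Z,\tilde{\bm Z})$, so that by $\dlambda^{2d}$-a.e.\ continuity of $h$ and the continuous mapping theorem $D_r\wconv h(\bm Z,\tilde{\bm Z})-h(\bm Z,\tilde{\bm Z})=0$.

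It remains to upgrade weak convergence of $D_r$ to convergence of its first moment and to pass from $L^1$- to $L^p$-convergence of $\Delta_{r,\ell}(1)$; both rest on a uniform moment bound. By conditional Jensen and Condition~\ref{cond:h_int_std}(a)—used along both the $r$- and the $r-\ell$-sequence, the latter being licit as $r-\ell\to\infty$ because $\ell=o(r)$—one gets $\limsup_r\Exp|h_{1,r}(\bm Z_{r,1})|^{2+\nu}<\infty$ and its analogue for the clipped block, whence $\limsup_r\Exp|\Delta_{r,\ell}(1)|^{2+\nu}<\infty$ and $\sup_r\Exp|D_r|^{2+\nu}<\infty$. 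The last bound makes $\{|D_r|\}$ uniformly integrable, so $D_r\wconv0$ forces $\Exp|D_r|\to0$ (Example~2.21 in \citealp{Van98}), which completes $\Exp|\Delta_{r,\ell}(1)|\to0$. Finally, for $p\in\N$ with $p<2+\nu$ I would interpolate via a truncation at level $K$, giving $\Exp|\Delta_{r,\ell}(1)|^p\le K^{p-1}\Exp|\Delta_{r,\ell}(1)|+K^{p-(2+\nu)}\sup_r\Exp|\Delta_{r,\ell}(1)|^{2+\nu}$; letting first $n\to\infty$ and then $K\to\infty$ yields the second statement.

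The main obstacle is precisely this $L^1$-convergence: the two projections $h_{1,r}$ and $h_{1,r-\ell}$ individually converge only \emph{in distribution} to $h_1$, which is far too weak to control their difference directly. The decisive idea is to realize both projections as conditional expectations driven by \emph{one and the same} independent copy of the pair $(\bm Z_{r,1},\bm Z_{r-\ell,1})$, thereby coupling the two $\bm y$-integrations on the diagonal; this is exactly what makes the integrand $D_r$ converge to zero. Coupling the two integrations through independent copies would instead leave a non-vanishing limit of the form $h(\bm Z,\tilde{\bm Z})-h(\bm Z,\tilde{\bm Z}')$.
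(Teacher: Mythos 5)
Your proposal is correct and follows essentially the same route as the paper: the decisive step — realizing $h_{1,r}(\bm Z_{r,1})-h_{1,r-\ell}(\bm Z_{r-\ell,1})$ as a (conditional) expectation of $h(\bm Z_{r,1},\tilde{\bm Z}_{r,1})-h(\bm Z_{r-\ell,1},\tilde{\bm Z}_{r-\ell,1})$ with the tilde variables an independent copy of the \emph{pair}, then invoking Lemma~\ref{lem:joint_conv_short_blocks}, the continuous mapping theorem and uniform integrability from Condition~\ref{cond:h_int_std}(a) — is exactly the paper's argument, as is the stationary covariance decomposition for the normalized sum. The only cosmetic deviations are that you pass from $L^1$ to $L^p$ by truncation where the paper applies Jensen directly at the $L^p$ level, and that you bound the long-range covariances via Berbee's coupling where the paper cites the $\alpha$-mixing covariance inequality of Dehling and Philipp; both yield the same rate.
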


\begin{proof}
We start by showing the second convergence. Let $\| \cdot \|_p$ denote the $L_p$-norm. Writing $\Delta_{r,\ell}(1) = \int h(\bm{Z}_{r,1},y_1) - h(\bm{Z}_{r-\ell,1}, y_2) \on{d}\!\Pm_{\bm{\tilde{Z}}_{r,1}} \otimes \Pm_{\bm{\tilde{Z}}_{r-\ell,1}} (y_1, y_2) + \vartheta_{r-\ell}-\vartheta_r$, we obtain
\begin{align*}
 \norm{\Delta_{r,\ell}(1)}_p 
\leq  \norm{h(\bm{Z}_{r,1}, \tilde{\bm{Z}}_{r,1} ) - 
h(\bm{Z}_{r-\ell,1}, \tilde{\bm{Z}}_{r-\ell,1} ) }_p
 + \abs{\vartheta_r - \vartheta_{r-\ell}} =: r_{1,n} + r_{2,n},
\end{align*}
by Jensen's inequality.
Since $\lim_{r \to\infty} \vartheta_r = \vartheta$, we have $r_{2,n} = o(1)$ for $n\to\infty$. 
By Lemma \ref{lem:joint_conv_short_blocks} and independence, the vector
$(\bm{Z}_{r,1}, \bm{Z}_{r-\ell,1}, \bm{\tilde{Z}}_{r,1}, \bm{\tilde{Z}}_{r-\ell,1})$ converges weakly to $ (\bm{Z} , \bm{Z},\bm{\tilde{Z}} ,\bm{\tilde{Z}})$, where $\bm Z, \tilde{\bm Z}$ are i.i.d.\ with cdf $G$. 
Therefore, the continuous mapping theorem yields
$ \abs{h(\bm{Z}_{r,1}, \tilde{\bm{Z}}_{r,1} ) - 
h(\bm{Z}_{r-\ell,1}, \tilde{\bm{Z}}_{r-\ell,1} )} =o_{\mathbb P}(1)$. By Condition \ref{cond:h_int_std}(a) we have asymptotic uniform integrability of $|h(\bm{Z}_{r,1}, \tilde{\bm{Z}}_{r,1} )|^{2+\nu}$ so that $r_{1,n} = o(1)$ by Example 2.21 in \cite{Van98} and stationarity.

Using stationarity and observing that the $\Delta_{r,\ell}(j)$ are centered we have 
\begin{multline*}
\Exp\Big[ \Big\{ \frac{1}{\sqrt{m}} \sum_{j \in \Idb} \Delta_{r,\ell}(j) \Big\}^2 \Big]  \\
\leq  3\Var(\Delta_{r,\ell}(1)) + 2 \sum_{s=2}^{m-1} (1-\frac{s}{m}) |\Cov(\Delta_{r,\ell}(1), \Delta_{r,\ell}(1+rs))|.
\end{multline*}
By Lemma 3.11 in \cite{DehPhi02}, Condition \ref{cond:h_int_std}(a) and \ref{cond:ser_dep} (c), there exists a constant $C > 0$ that is independent of $s \ge 2$ and $n$ such that $
\abs{\Cov(\Delta_{r,\ell}(1), \Delta_{r,\ell}(1+rs))} 
\leq C \alpha(r)^{v/(2+\nu)}.
$
Now $\alpha(r) \leq \beta(r)$ and Condition~\ref{cond:ser_dep} (c) imply that the sum in the upper display converges to 0. Hence, an application of the second claim of this lemma implies the first claim and the proof is finished.
\end{proof}

\begin{lemma}[Restriction to independent blocks] \label{lem:switch_to_indep}
Suppose Conditions \ref{cond:ser_dep}(a), (b), \ref{cond:h_int_std}(a) are met and $h$ is $\dlambda^{2d}$-a.e.\ continuous. Then $m^{-1/2}\sum_{j \in \Idb} h_{1,r}(\bm{Z}_{r,j})$ converges weakly if and only if $m^{-1/2} \sum_{j \in \Idb} h_{1,r}(\tilde{\bm{Z}}_{r,j})$ converges weakly. In that case the weak limits coincide.
\end{lemma}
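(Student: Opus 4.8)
The plan is to run a blocking argument based on \emph{clipping}. The disjoint blocks $\bm{Z}_{r,j}$, $j\in\Idb$, occupy adjacent time intervals, so consecutive blocks share a common boundary and the mixing coefficients cannot be applied to them directly. I would therefore first shorten each block by its last $\ell$ observations, replacing $h_{1,r}(\bm{Z}_{r,j})$ by $h_{1,r-\ell}(\bm{Z}_{r-\ell,j})$; the clipped blocks are then separated by gaps of length $\ell$, which makes the coefficient $\alpha(\ell)$ available. Write $S_n := m^{-1/2}\sum_{j\in\Idb} h_{1,r}(\bm{Z}_{r,j})$ and $S_n' := m^{-1/2}\sum_{j\in\Idb} h_{1,r-\ell}(\bm{Z}_{r-\ell,j})$, and let $\tilde{S}_n,\tilde{S}_n'$ denote the corresponding sums built from the independent copies $\tilde{\bm{Z}}_{r,j}, \tilde{\bm{Z}}_{r-\ell,j}$. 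The assertion follows by chaining the equivalences $S_n\leftrightarrow S_n'\leftrightarrow \tilde{S}_n'\leftrightarrow\tilde{S}_n$, where each arrow means ``converges weakly simultaneously, to the same limit''.

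The outer two equivalences are supplied by Lemma~\ref{lem:block_diff_L^2_conv}. Since $S_n-S_n' = m^{-1/2}\sum_{j\in\Idb}\Delta_{r,\ell}(j)\to 0$ in $L^2$, hence in probability, Slutsky's lemma shows that $S_n$ converges weakly iff $S_n'$ does, with identical limits. For the independent version the summands $\tilde{\Delta}_{r,\ell}(j) := h_{1,r}(\tilde{\bm{Z}}_{r,j}) - h_{1,r-\ell}(\tilde{\bm{Z}}_{r-\ell,j})$ are independent across $j$, centered, and satisfy $\tilde{\Delta}_{r,\ell}(j)\stackrel{d}{=}\Delta_{r,\ell}(1)$ by stationarity and by the defining property of the block copies; hence $\Var(\tilde{S}_n-\tilde{S}_n') = \Exp[\Delta_{r,\ell}(1)^2]\to 0$ by the second assertion of Lemma~\ref{lem:block_diff_L^2_conv} with $p=2$, so $\tilde{S}_n-\tilde{S}_n'\pconv 0$ and the weak limits of $\tilde{S}_n$ and $\tilde{S}_n'$ coincide.

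The central step is to match $S_n'$ with $\tilde{S}_n'$. I would compare characteristic functions: fix $t\in\R$ and set $g_j(\bm{z}) := \exp(it\, m^{-1/2} h_{1,r-\ell}(\bm{z}))$, so that $\Exp[e^{itS_n'}] = \Exp[\prod_{j\in\Idb} g_j(\bm{Z}_{r-\ell,j})]$, whereas by independence $\Exp[e^{it\tilde{S}_n'}] = \prod_{j\in\Idb}\Exp[g_j(\tilde{\bm{Z}}_{r-\ell,j})] = \prod_{j\in\Idb}\Exp[g_j(\bm{Z}_{r-\ell,j})]$. Each $g_j$ is unimodular, $|g_j|\le 1$, and consecutive clipped blocks are separated by $\ell$ time points, so a Volkonskii--Rozanov-type factorization estimate for $\alpha$-mixing sequences gives $|\Exp[e^{itS_n'}] - \Exp[e^{it\tilde{S}_n'}]|\le C(m-1)\alpha(\ell)$. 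Since $m=\lfloor n/r\rfloor$ and $\tfrac{n}{r}\alpha(\ell)=o(1)$ by Condition~\ref{cond:ser_dep}(b), the right-hand side is $o(1)$. Thus the two characteristic functions share the same limit points, and Lévy's continuity theorem yields that $S_n'$ converges weakly iff $\tilde{S}_n'$ does, to the same limit.

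Chaining the three equivalences completes the argument. The main obstacle is the factorization step: one must invoke the correct product estimate for unimodular functions evaluated on the blocks of the (triangular) array and verify that the clipped blocks are genuinely $\ell$-separated, so that peeling off one block at a time accumulates total error only of order $m\,\alpha(\ell)$, which is exactly what Condition~\ref{cond:ser_dep}(b) controls. The clipping performed in the first step is precisely what produces these gaps, since the original disjoint blocks are adjacent and admit no such bound; note also that this step requires no moment assumptions, the integrability hypothesis entering only through Lemma~\ref{lem:block_diff_L^2_conv}.
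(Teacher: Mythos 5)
Your proposal is correct and follows essentially the same route as the paper: clip each block by $\ell$ observations, use Lemma~\ref{lem:block_diff_L^2_conv} to control the clipping error for both the dependent and the independent versions, and compare characteristic functions of the clipped sums via an $\alpha$-mixing covariance bound accumulating error $O(m\,\alpha(\ell))=o(1)$ under Condition~\ref{cond:ser_dep}(b), concluding with Lévy's continuity theorem. Your explicit variance computation for $\tilde S_n-\tilde S_n'$ is a slightly more detailed rendering of the step the paper dispatches by referring back to the argument of Lemma~\ref{lem:block_diff_L^2_conv}.
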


\begin{proof}
Let $\phi_Y$ denote the characteristic function of a real-valued random variable $Y.$ First note that, by Lemma 3.9 in \cite{DehPhi02}, we have, for $t \in \R$,
\begin{align*}
&\phantom{{}={}} \abs{\phi_{h_{1,r-\ell}(\bm{Z}_{r-\ell,1}) + h_{1,r-\ell}(\bm{Z}_{r-\ell,1+r})}(t) - \phi_{h_{1,r-\ell}(\tilde{\bm{Z}}_{r-\ell,1}) + h_{1,r-\ell}(\tilde{\bm{Z}}_{r-\ell,1+r})}(t)} \\
&=  \big| \Cov \big( e^{ith_{1,r-\ell}(\bm{Z}_{r-\ell,1})},e^{ith_{1,r-\ell}(\bm{Z}_{r-\ell,1+r})} \big) \big| 
\leq 2\pi \alpha(\ell),
\end{align*}
as there is a lag of length $\ell$ between $\bm{X}_{r-\ell}$ and $\bm{X}_{r+1}.$
An induction over the number of summands results in 
\begin{equation*} 
\Big| \phi_{\sum_{j \in \Idb} h_{1,r-\ell}(\bm{Z}_{r-\ell,j}) }(t) - \phi_{\sum_{j \in \Idb} h_{1,r-\ell}(\tilde{\bm{Z}}_{r-\ell,j})}(t) \Big|  
\leq 2\pi m\alpha(\ell) =o(1),
\end{equation*}
by Condition \ref{cond:ser_dep} (b). Hence, by Levy's continuity theorem, the weak limit of $m^{-1/2}\sum_{j \in \Idb}h_{1,r-\ell}(\bm{Z}_{r-\ell,j})$ exists iff the weak limit of $m^{-1/2}\sum_{j \in \Idb} h_{1,r-\ell}(\tilde{\bm{Z}}_{r-\ell,j})$ exists, and in that case, the limits  coincide.

Finally, by Lemma \ref{lem:block_diff_L^2_conv}
\begin{align*}
\frac{1}{\sqrt{m}} \sum_{j \in \Idb} h_{1,r}(\bm{Z}_{r,j}) 
&=  
\frac{1}{\sqrt{m}} \sum_{j \in \Idb}  \Delta_{r,\ell}(j) +  \frac{1}{\sqrt{m}} \sum_{j \in \Idb} h_{1,r-\ell}(\bm{Z}_{r-\ell,j}) \\
&= 
o_\Pm(1) +   \frac{1}{\sqrt{m}} \sum_{j \in \Idb} h_{1,r-\ell}(\bm{Z}_{r-\ell,j}).
\end{align*}
Thus, the weak limit of $m^{-1/2}  \sum_{j \in \Idb} h_{1,r}(\bm{Z}_{r,j})$ exists iff the weak limit of $m^{-1/2} \sum_{j \in \Idb} h_{1,r-\ell}(\bm{\tilde{Z}}_{r-\ell,j})$ exists and they coincide. The same line of argumentation as in the proof Lemma \ref{lem:block_diff_L^2_conv} yields the result.
\end{proof}

\subsection{Sliding blocks - stationary case}

Recall the definitions of $G_\xi$, $L_\xi$ and  $C_{\xi}$ from \eqref{eq:gxi}, \eqref{eq:lxi} and \eqref{eq:cxi}, respectively. Recall the convention that $L := \on{id}_{[0,\infty]}$ if $d=1$, which implies $C=\on{id}_{[0,1]}$.
The following is a generalization of Lemma B.3 in the supplementary material to \cite{BucZan23} for dimensions $d \geq 1$.

\begin{lemma}\label{lem:overlap_wconv}
Suppose that Conditions \ref{cond:ser_dep}(a) and (b) are met. Then, for any $\xi \geq 0$ and $\bm{x},\bm{y} \in \R^d$,
\begin{equation*}
\lim_{n \to \infty} \Pm(\bm{Z}_{r,1} \leq \bm{x}, \bm{Z}_{r,\xi_r} \leq \bm{y} ) = G_\xi(\bm{x}, \bm{y}),
\end{equation*}
where $\xi_r = 1+ \flo{r\xi}$.
Furthermore, $G_\xi$ is the cdf of a $2d$-variate extreme value distribution with copula $C_{\xi}$ and stable tail dependence function $L_\xi$.
\end{lemma}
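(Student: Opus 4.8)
The plan is to evaluate $\lim_{n\to\infty}\Pm(\bm Z_{r,1}\le\bm x,\bm Z_{r,\xi_r}\le\bm y)$ directly from the overlap geometry of the two blocks and then to verify that the resulting limit has the claimed form. Write $t_x^{(j)}:=-\log G_{\gamma^{(j)}}(x^{(j)})$, $t_y^{(j)}:=-\log G_{\gamma^{(j)}}(y^{(j)})$, $\bm t_x=(t_x^{(1)},\dots,t_x^{(d)})$ and $\bm t_y$ analogously, so that by \eqref{eq:cxi}--\eqref{eq:gxi} the target equals $G_\xi(\bm x,\bm y)=\exp\{-L_\xi(\bm t_x,\bm t_y)\}$. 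The case $\xi=0$ is immediate, since then $\xi_r=1$ and the probability equals $\Pm(\bm Z_{r,1}\le\bm x\wedge\bm y)\to G(\bm x\wedge\bm y)=\exp\{-L(\bm t_x\vee\bm t_y)\}=G_0(\bm x,\bm y)$, using $-\log G_{\gamma^{(j)}}(x^{(j)}\wedge y^{(j)})=t_x^{(j)}\vee t_y^{(j)}$ and \eqref{eq:lxi} with $\xi\wedge1=0$. For $\xi>1$ the two blocks are separated by $\lfloor r\xi\rfloor-r\to\infty$ positions; since $\alpha$ is non-increasing and $\alpha(\ell_n)\to0$ we have $\alpha(k)\to0$ as $k\to\infty$, so one application of the $\alpha$-mixing inequality together with stationarity gives $\Pm\to G(\bm x)G(\bm y)=\exp\{-L(\bm t_x)-L(\bm t_y)\}$, which matches $L_\xi$ for $\xi\wedge1=1$. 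It remains to treat $\xi\in(0,1]$.

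Block $1$ occupies the positions $\{1,\dots,r\}$ and block $\xi_r$ the positions $\{\xi_r,\dots,\xi_r+r-1\}$, so for $\xi\in(0,1]$ the relevant indices split into a left part $A=\{1,\dots,\xi_r-1\}$, an overlap $B=\{\xi_r,\dots,r\}$ (empty if $\xi=1$) and a right part $C=\{r+1,\dots,\xi_r+r-1\}$, with relative sizes $|A|/r\to\xi$, $|B|/r\to1-\xi$ and $|C|/r\to\xi$. As $\bm a_r>0$, the event $\{\bm M_{r,1}\le\bm a_r\bm x+\bm b_r,\ \bm M_{r,\xi_r}\le\bm a_r\bm y+\bm b_r\}$ coincides with
\[
\Big\{\max_{i\in A}\bm X_i\le\bm a_r\bm x+\bm b_r\Big\}\cap\Big\{\max_{i\in B}\bm X_i\le\bm a_r(\bm x\wedge\bm y)+\bm b_r\Big\}\cap\Big\{\max_{i\in C}\bm X_i\le\bm a_r\bm y+\bm b_r\Big\},
\]
where $\bm x\wedge\bm y$ is the componentwise minimum. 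I would then delete the last $\ell=\ell_n$ indices from $A$ and from $B$, creating gaps of length $\ell$ between the three surviving index sets $A',B',C$; let $E_1,E_2,E_3$ denote the events that the maxima over $A',B',C$ stay below the corresponding thresholds. Two errors must be shown to vanish. First, clipping only enlarges each maximum-event, and the difference $\Pm(E_1\cap E_2\cap E_3)-\Pm(\text{full event})$ is bounded by $\Pm(\max_{1\le i\le\ell}\bm X_i\not\le\bm a_r\bm x+\bm b_r)+\Pm(\max_{1\le i\le\ell}\bm X_i\not\le\bm a_r(\bm x\wedge\bm y)+\bm b_r)$ by stationarity, which tends to $0$ because a block of size $\ell=o(r)$ normalized at scale $r$ stays below any interior threshold with probability $1-o(1)$ (cf.\ Lemma~B.15 in \cite{BucZan23} and the proof of Lemma~\ref{lem:joint_conv_short_blocks}). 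Second, as $E_1,E_2,E_3$ are measurable with respect to $\sigma$-fields separated by gaps of length $\ell$, two applications of the $\alpha$-mixing inequality yield $|\Pm(E_1\cap E_2\cap E_3)-\Pm(E_1)\Pm(E_2)\Pm(E_3)|\le2\alpha(\ell)\to0$, where $\alpha(\ell)\to0$ follows from $\frac r\ell\alpha(\ell)=o(1)$ and $r/\ell\to\infty$ in Condition~\ref{cond:ser_dep}(b).

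It remains to identify the three marginal limits. I would record the auxiliary fact that for any block size $s_n$ with $s_n/r\to\theta\in(0,1]$ and any $\bm z\in\R^d$ one has $\Pm(\max_{1\le i\le s_n}\bm X_i\le\bm a_r\bm z+\bm b_r)\to\exp\{-\theta L(\bm t_z)\}$, $t_z^{(j)}:=-\log G_{\gamma^{(j)}}(z^{(j)})$. Indeed, rewriting the threshold in the native normalization gives $\bm a_r\bm z+\bm b_r=\bm a_{s_n}\bm w_n+\bm b_{s_n}$ with $w_n^{(j)}=(a_r^{(j)}z^{(j)}+b_r^{(j)}-b_{s_n}^{(j)})/a_{s_n}^{(j)}\to w^{(j)}:=\theta^{-\gamma^{(j)}}z^{(j)}+(\theta^{-\gamma^{(j)}}-1)/\gamma^{(j)}$ by \eqref{eq:rvscale}; since $\max_{1\le i\le s_n}\bm X_i$ normalized by $\bm a_{s_n},\bm b_{s_n}$ converges to $\bm Z\sim G$ (Condition~\ref{cond:mda}) and $G$ is continuous, the probability converges to $G(\bm w)=\exp\{-L(\bm t_w)\}$, and the identity $1+\gamma^{(j)}w^{(j)}=\theta^{-\gamma^{(j)}}(1+\gamma^{(j)}z^{(j)})$ gives $t_w^{(j)}=\theta\,t_z^{(j)}$, so homogeneity (L1) yields $L(\bm t_w)=\theta L(\bm t_z)$. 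Applying this to the three clipped factors (whose relative sizes still tend to $\xi,1-\xi,\xi$) produces $\exp\{-\xi L(\bm t_x)\}$, $\exp\{-(1-\xi)L(\bm t_x\vee\bm t_y)\}$ (the threshold $\bm x\wedge\bm y$ gives $\bm t_{x\wedge y}=\bm t_x\vee\bm t_y$) and $\exp\{-\xi L(\bm t_y)\}$; multiplying and comparing with \eqref{eq:lxi} for $\xi\wedge1=\xi$ gives exactly $\exp\{-L_\xi(\bm t_x,\bm t_y)\}=G_\xi(\bm x,\bm y)$.

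For the final assertion I would check that $L_\xi$ from \eqref{eq:lxi} satisfies the four defining properties of a stable tail dependence function, so that by \cite{GudSeg10,Pic81} the function $C_\xi$ is a genuine $2d$-variate extreme-value copula and $G_\xi$ a genuine $2d$-variate extreme-value distribution with margins $G_{\gamma^{(1)}},\dots,G_{\gamma^{(d)}},G_{\gamma^{(1)}},\dots,G_{\gamma^{(d)}}$. Homogeneity (L1) holds because $(\bm x,\bm y)\mapsto\bm x\vee\bm y$ is positively homogeneous and $L$ is homogeneous; (L2) follows by inserting the $2d$ unit vectors into \eqref{eq:lxi} and using $L(\bm e_j)=1$ and $L(\bm 0)=0$; for the bounds (L3) one combines the bounds for $L$, e.g.\ for the lower bound, if $M_x:=\max_j x^{(j)}\ge M_y:=\max_j y^{(j)}$ then $L_\xi\ge(\xi\wedge1)(M_x+M_y)+(1-(\xi\wedge1))M_x\ge M_x$; and convexity (L4) holds because $L_\xi$ is a nonnegative combination of the convex maps $(\bm x,\bm y)\mapsto L(\bm x)$, $(\bm x,\bm y)\mapsto L(\bm y)$ and $(\bm x,\bm y)\mapsto L(\bm x\vee\bm y)$, the last being convex as the composition of the coordinatewise non-decreasing convex function $L$ with the componentwise convex map $(\bm x,\bm y)\mapsto\bm x\vee\bm y$. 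The main obstacle is the blocking step of the second paragraph: one must simultaneously control the clipping error (via $\ell=o(r)$ and the short-block estimate) and the $\alpha$-mixing factorization error; the change-of-normalization computation and the verification of (L1)--(L4) are then routine.
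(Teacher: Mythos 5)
Your computation of the limit is correct and is in substance the same as the paper's: the paper simply outsources the blocking/clipping argument to Lemma~B.3 in \cite{BucZan23} (``by the same arguments as in the proof of Lemma~B.3\dots'') and then, exactly as you do, rewrites the resulting product of three $G$-factors as $\exp\{-L_\xi(\bm t_x,\bm t_y)\}$ using $-\log G_{\gamma^{(j)}}(x)=(1+\gamma^{(j)}x)^{-1/\gamma^{(j)}}$ and the homogeneity of $L$. Your explicit treatment of the index sets $A,B,C$, the $\ell$-clipping, the two $\alpha$-mixing factorizations, and the change-of-normalization identity $t_w^{(j)}=\theta\, t_z^{(j)}$ is precisely the content of that cited lemma, so the first two paragraphs and the identification step are sound. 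The one ingredient you should state explicitly is the locally uniform version of \eqref{eq:rvscale}, needed because the clipped block lengths satisfy $s_n/r\to\theta$ rather than $s_n=\lfloor r\theta\rfloor$ exactly; the paper invokes this in the proof of Lemma~\ref{lem:joint_conv_short_blocks} via Lemma~B.15 of \cite{BucZan23}.

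The final paragraph, however, has a logical gap. Properties (L1)--(L4) are necessary for a function to be a stable tail dependence function, but in dimension $\ge 3$ they are not sufficient: there exist homogeneous, normalized, convex functions obeying the bounds in (L3) for which the associated $C$ fails to be a copula. Since $L_\xi$ is defined on $[0,\infty]^{2d}$, your verification of (L1)--(L4) proves that $C_\xi$ is an extreme-value copula only when $d=1$. The repair is immediate and is exactly what the paper does: your first part already shows that $G_\xi$ is the pointwise limit of the joint cdfs of $(\bm Z_{r,1},\bm Z_{r,\xi_r})$, whose margins converge to the proper laws $G_{\gamma^{(j)}}$, so the sequence is tight and $G_\xi$ is a genuine $2d$-variate cdf; $C_\xi$ is then its (unique, by continuity of the margins) copula, and the max-stability $C_\xi(\bm u^s,\bm v^s)=C_\xi(\bm u,\bm v)^s$ for $s>0$ --- which follows directly from the homogeneity of $L_\xi$ that you did verify --- makes $C_\xi$ an extreme-value copula with stable tail dependence function $L_\xi$. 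So keep the homogeneity check, but derive the copula property from the weak limit rather than from (L1)--(L4).
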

\begin{proof}
We only consider the case $\xi \in [0,1]$; the case $\xi > 1$ can be treated similarly. By the same arguments as in the proof of Lemma~B.3 in \cite{BucZan23}, we obtain that
\begin{multline} \label{eq:gxi2}
\lim_{n \to \infty} \Pm(\bm{Z}_{r,1} \leq \bm{x}, \bm{Z}_{r,\xi_r} \leq \bm{y} )     \\
=  
    G\left( \xi^{-\gamma^{(1)}}x^{(1)} +\frac{\xi^{-\gamma^{(1)}}-1}{\gamma^{(1)}}, \ldots, \xi^{-\gamma^{(d)}}x^{(d)} +\frac{\xi^{-\gamma^{(d)}}-1}{\gamma^{(d)}}\right)  \\
    \times G\left( \xi^{-\gamma^{(1)}}y^{(1)} + \frac{\xi^{-\gamma^{(1)}}-1}{\gamma^{(1)}}, \ldots, \xi^{-\gamma^{(d)}}y^{(d)} + \frac{\xi^{-\gamma^{(d)}}-1}{\gamma^{(d)}} \right) \\ 
    \times  G\Bigg( (1-\xi)^{-\gamma^{(1)}} \left( x^{(1)} \wedge y^{(1)} \right) + \frac{(1-\xi)^{-\gamma^{(1)}}-1}{\gamma^{(1)}}, \ldots \\
    \ldots,(1-\xi)^{-\gamma^{(d)}} \left(x^{(d)} \wedge y^{(d)} \right) + \frac{(1-\xi)^{-\gamma^{(d)}}-1}{\gamma^{(d)}} \Bigg).
\end{multline}
Since $-\log G_\gamma(x) = (1+\gamma x)^{-1/\gamma}$, we may write
\begin{align*}
G(\bm x) 
&=
\exp\Big\{-L\Big( - (1+\gamma^{(1)} x^{(1)})^{-1/\gamma^{(1)}}, 
\dots,
(1+\gamma^{(d)} x^{(d)})^{-1/\gamma^{(d)}} \Big) \Big\},
\end{align*}
A straightforward calculation then shows that the expression on the right-hand side of \eqref{eq:gxi2} can be written as $G_\xi(\bm x, \bm y)$. In particular, $C_\xi$ is a copula, which can easily be seen to be max-stable, i.e., $C_\xi(\bm u^s, \bm v^s) = C_\xi(\bm u, \bm v)^s$ for all $s>0$ and $\bm u, \bm v \in [0,1]^d$. It is hence an extreme-value copula with the given stable tail-dependence function $L_\xi$ and $G_\xi$ is the cdf of an extreme-value distribution.
\end{proof}

\begin{theorem} [CLT for sliding blocks] \label{thm:slid_gws}
Suppose that Conditions~\ref{cond:ser_dep}(a), (b) are satisfied and that there exists an $\omega > 0$ with $m_n^{1+\omega}\alpha(r_n) \to 0.$ For each $r=r_n$ let $\mc{F}_r = \lbrace f_{r,i} \colon \R^d \to \R \mid i \in \N \rbrace$ be a family of deterministic maps with the following properties:
 \begin{enumerate}[(i)]
 \item $f_{r,r+s} = f_{r,s}$ for all $s \in \N$ and $r=r_n$ with $n\in\N$;
 \item The random variables $f_{r,i}(\bm{Z}_{r,i})$ are centered for all $i \in \N$ and $r=r_n$ with $n\in\N$;
 \item There exists a $\nu > 2/\omega$ with $\limsup_{n \to \infty} \sup_{i\in\N} \Exp[ |f_{r,i}(\bm{Z}_{r,i})|^{2+\nu}] < \infty.$
 \item There exists a map $f \colon \R^d \to \R$ such that for, all $\xi \in [0,1], \xi^\prime \in [0,2]$, as $n \to \infty,$
\begin{align} \label{eq:wl2}
\left(f_{r, \xi_r}(\bm{Z}_{r,\xi_r}),f_{r, \xi'_r}(\bm{Z}_{r,\xi'_r}) \right) \wconv 
\left( f(\bm{Z}_{1, \abs{\xi - \xi^\prime}}), f(\bm{Z}_{2, \abs{\xi - \xi^\prime}}) \right),
\end{align}
where $\xi_r=1+\flo{r\xi}, \xi'_r=1+\flo{r\xi'}$ and $(\bm{Z}_{1, \abs{\xi - \xi^\prime}}, \bm{Z}_{2, \abs{\xi - \xi^\prime}}) \sim G_{\abs{\xi - \xi^\prime}}$.
 \end{enumerate}
Then, for $n\to\infty$,
\begin{align*}
\frac{\sqrt{m}}{n} \sum_{i=1}^{n} f_{r,i}( \bm{Z}_{r,i})\wconv 
\mathcal N(0, \sigma_f^2), \quad
\sigma_f^2 :=
2\int_0^1\Cov(f(\bm{Z}_{1, \xi}), f(\bm{Z}_{2, \xi})) \diff\xi.   
\end{align*}
\end{theorem}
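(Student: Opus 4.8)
The plan is to reduce the sliding sum to a normalized sum of stationary macro-block sums and then run a big-block/small-block argument in the spirit of \cite{ZouVolBuc21, BucZan23}. Write $X_i := f_{r,i}(\bm{Z}_{r,i})$ and, for simplicity, assume $n=mr$ (the general case, and the difference between $n$ and $n-r+1$, only concern $O(r)$ summands and contribute an additive $o_\Pm(1)$ term). Grouping the starting indices into the $m$ consecutive blocks $B_k:=\{(k-1)r+1,\dots,kr\}$ and setting $T_k:=r^{-1}\sum_{i\in B_k}X_i$, periodicity (i) and stationarity of $(\bm{Z}_{r,i})_i$ make $(T_k)_k$ stationary, and
\[
\frac{\sqrt m}{n}\sum_{i=1}^{n} X_i=\frac{1}{\sqrt m}\sum_{k=1}^{m}T_k .
\]
By Minkowski's inequality and (iii), $\sup_k\|T_k\|_{2+\nu}=O(1)$. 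The data footprint of $B_k$ is $\{(k-1)r+1,\dots,(k+1)r-1\}$, so $T_k$ and $T_{k+1}$ overlap strongly while $T_k$ and $T_{k+j}$ with $j\ge2$ are separated by at least $(j-1)r$ observations; the covariance inequality (Lemma~3.11 in \cite{DehPhi02}) together with (iii) then gives $|\Cov(T_1,T_{1+j})|\lesssim\alpha((j-1)r)^{\nu/(2+\nu)}$ for $j\ge2$.

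Next I would fix an intermediate sequence $\lambda=\lambda_n\to\infty$ with $\lambda=o(m)$ and alternately form macro-blocks of $\lambda$ consecutive $B_k$'s separated by gaps of two discarded $B_k$'s, giving $w\sim m/\lambda\to\infty$ macro-blocks whose data are pairwise separated by an observation gap of $r+1$. Let $U_w$ denote the sum of the $T_k$ over the $w$-th macro-block. The discarded part has variance $O(\lambda^{-1})=o(1)$ — the discarded indices form $O(m/\lambda)$ two-block groups, each contributing $O(1)$ and being asymptotically uncorrelated across groups — so it is $o_\Pm(1)$. To decouple the retained macro-blocks I would pass to independent copies $\tilde U_1,\dots,\tilde U_w$ with $\tilde U_w\stackrel{d}{=}U_w$ and, exactly as in the proof of Lemma~\ref{lem:switch_to_indep} via a telescoping characteristic-function estimate (Lemma~3.9 in \cite{DehPhi02}) across the inter-block gap, bound the difference of the characteristic functions of $m^{-1/2}\sum_w U_w$ and $m^{-1/2}\sum_w\tilde U_w$ by $C\,w\,\alpha(r+1)\le C\,m\,\alpha(r_n)\to0$, the last step using $m^{1+\omega}\alpha(r_n)\to0$. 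It then remains to prove the CLT for the independent array $m^{-1/2}\sum_w\tilde U_w$.

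For the latter I would check a Lyapunov condition. Since the $\tilde U_w$ are i.i.d.\ as $U_1$, one has $\Var(m^{-1/2}\sum_w\tilde U_w)=(w/m)\Var(U_1)$, and $\Var(U_1)=\lambda\,(\Var(T_1)+2\Cov(T_1,T_2))+o(\lambda)$ once $\lambda$ is additionally chosen so that $\lambda\,\beta(r_n)^{\nu/(2+\nu)}\to0$ (possible because $\beta(r_n)\to0$, and compatible with $w\,\alpha(r_n)\to0$), which renders the lag-$j$ contributions with $j\ge2$ negligible. Using \eqref{eq:wl2} with $\xi'=0$, Lemma~\ref{lem:overlap_wconv} and the uniform integrability supplied by (iii), the discrete covariances $\Cov(X_1,X_{1+s})$ converge to $\Cov(f(\bm{Z}_{1,\xi}),f(\bm{Z}_{2,\xi}))$ along $s/r\to\xi$, and a Riemann-sum argument yields
\[
\Var(T_1)+2\Cov(T_1,T_2)\longrightarrow 2\int_0^1\Cov\!\big(f(\bm{Z}_{1,\xi}),f(\bm{Z}_{2,\xi})\big)\,\diff\xi=\sigma_f^2 ,
\]
so that $\Var(m^{-1/2}\sum_w\tilde U_w)\to\sigma_f^2$. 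For the numerator with exponent $2+\delta$, any $\delta<\nu$, a Rosenthal-type moment inequality for the weakly dependent sequence $(T_k)$ gives $\|U_w\|_{2+\delta}\lesssim\sqrt\lambda$, whence $m^{-(1+\delta/2)}\sum_w\Exp|U_w|^{2+\delta}\lesssim(\lambda/m)^{\delta/2}\to0$. Lyapunov's central limit theorem then delivers $m^{-1/2}\sum_w\tilde U_w\wconv\mathcal N(0,\sigma_f^2)$; combined with the two previous steps and Slutsky's lemma this proves the claim (in the degenerate case $\sigma_f^2=0$ the variance computation alone gives $L^2$-convergence to $0$).

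The \emph{main obstacle} is twofold and lies entirely in the last step. First, the variance identity: converting the doubly-indexed covariance sums into the integral $\sigma_f^2$ requires combining the weak convergence \eqref{eq:wl2} with uniform integrability to pass covariances to the limit, a dominated/Riemann-sum argument to produce the integral, and the covariance tail bound to discard lags $j\ge2$ — all while the nearest-neighbour terms ($j=1$, i.e.\ the overlap) contribute at first order and must be retained. Second, the Lyapunov moment bound $\|U_w\|_{2+\delta}\lesssim\sqrt\lambda$ needs a moment inequality robust to the strong local dependence of the overlapping block maxima. Everything else — the reduction to $(T_k)$, the negligibility of the discarded indices, and the characteristic-function decoupling — is routine, and the several constraints on $\lambda_n$ ($\lambda\to\infty$, $\lambda=o(m)$, $\lambda\beta(r_n)^{\nu/(2+\nu)}\to0$) can be met simultaneously by a standard intermediate-sequence selection.
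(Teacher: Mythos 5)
Your proposal follows essentially the same route as the paper's proof: the same reduction to stationary micro-block sums $T_k$, the same big-block/small-block scheme with two discarded micro-blocks per gap, the same characteristic-function decoupling across the length-$r$ gaps, a Lyapunov CLT for the decoupled array, and the same identification of the limiting variance from $\Var(T_1)+2\Cov(T_1,T_2)$ via the weak convergence in \eqref{eq:wl2}, uniform integrability and a Riemann-sum/dominated-convergence argument. Two small corrections to what you flag as the main obstacles: the Lyapunov numerator does not need a Rosenthal-type inequality --- the crude Minkowski bound $\norm{U_w}_{2+\nu}=O(\lambda)$ already suffices once $\lambda$ is chosen with $\lambda=o\bigl(m^{\nu/(2(1+\nu))}\bigr)$, which is exactly the paper's choice of $m^{\ast}$; and the side constraint for discarding lags $j\ge 2$ should be stated with $\alpha(r_n)$ rather than $\beta(r_n)$, since the theorem does not assume $\beta$-mixing and the covariance inequality you invoke is for strong mixing anyway --- this costs nothing because $m^{1+\omega}\alpha(r_n)\to0$ together with $\nu>2/\omega$ already yields $m\,\alpha(r_n)^{\nu/(2+\nu)}\to0$.
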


\begin{proof}
The proof is similar to the one of Theorem 3.6 in \cite{BucSeg18a}.
For $j\in\{1,\ldots,m\}$, let  $I_j := \lbrace (j-1)r+1,\ldots,jr \rbrace$. Choose $m^\ast = m_n^\ast \in \N$ with $3 \leq m^\ast \leq m$ such that $m^\ast \to \infty$ and $m^\ast = o(m^{\nu/(2(1+\nu))}).$ Next, define $q := q_n := m/m^\ast$ and assume without loss of generality that $q \in \N, n/r \in \N$. For $j\in\N$ define $J_j^+ := I_{(j-1)m^\ast + 1} \cup \ldots \cup I_{jm^\ast-2}$ as the index set making up the big blocks, and $J_j^- := I_{jm^\ast -1} \cup I_{jm^\ast}$ as the index set making up the small blocks. Note that $\#J_j^+ = (m^\ast -2)r$ and $\#J_j^- = 2r.$ The previous definitions allow to rewrite
\begin{equation}\label{eq:proof_clt_sld_decomp}
\frac{\sqrt{m}}{n} \sum_{i=1}^{n} f_{r,i}(\bm{Z}_{r,i}) = \frac{1}{\sqrt{q}} \sum_{j=1}^q ( S_{n,j}^+ + S_{n,j}^- ),
\end{equation}
where $S_{n,j}^+ := \sqrt{q/(nr)} \sum_{s \in J_j^+} f_{r,s}(\bm{Z}_{r,s})$ and $S_{n,j}^- := \sqrt{q/(nr)} \sum_{s \in J_j^-} f_{r,s}(\bm{Z}_{r,s}).$

Note that the random variables $(S_{n,j}^{\pm})_j$ are stationary by (i). Hence, by (ii)
\begin{align*}
\Var\Big( \frac1{\sqrt q}\sum_{j=1}^q S_{n,j}^- \Big)
&= 
\frac{1}{q} \sum_{j=1}^q \Var(S_{n,j}^-) + \frac{2}{q} \sum_{1 \leq i < j \leq q} \Cov( S_{n,i}^-, S_{n,j}^- ) \\ 
&\leq
3 \Var ( S_{n,1}^-)  + 2 \sum_{k=2}^{q-1} |\Cov ( S_{n,1}^-, S_{n,1+k}^-)| =: R_{n1} +R_{n2}.
\end{align*}
Properties (ii), (iii) and the definition of $m^*$ and $q$ yield 
\begin{align*}
\left(\frac{R_{n1}}{3}\right)^{1/2} \leq \sqrt{\frac{q}{nr}} \sum_{s \in J_1^-} \norm{f_{r,s}(\bm{Z}_{r,s})}_2
= O\Big(\sqrt{\frac{1}{m^\ast}}\Big) =o(1).
\end{align*}
For $R_{n,2}$ note that by property (iii) and Lemma 3.11 from \cite{DehPhi02} we have that $\sup_{k\ge 2} |\Cov(S_{n,1}^-, S_{n,1+k}^-) | \lesssim (m^\ast)^{-1} \alpha(r)^{\nu/(2+\nu)}$. Since $m^\ast \geq 3$ we obtain $R_{n,2} \lesssim m (m^\ast)^{-2}\alpha(r)^{\nu/(2+\nu)} = o(1)$  by assumption. Therefore, in view of $\Exp[S_{n,j}^-]= 0$ for all $j$ by (ii), we obtain $q^{-1/2}\sum_{j=1}^q S_{n,j}^- = o_\Pm(1)$.

Concerning the sum over $S_{n,j}^+$ note that we may assume that $S_{n,1}^+, S_{n,2}^+, \dots $ are independent by arguing as in the proof of Lemma \ref{lem:switch_to_indep}, since there is a lag of $r$ between any two big blocks. Hence, we may subsequently apply Ljapunov's central limit theorem. 

We will show below that 
\begin{align}
\label{eq:csigma} 
\lim_{n\to\infty} \sigma_n^2 = \sigma_f^2, \qquad
\sigma_n^2 :=\Var\Big(\frac{\sqrt{m}}{n} \sum_{i=1}^{n} f_{r,i}\left(\bm{Z}_{r,i}\right)\Big).
\end{align}
If $\sigma_f^2=0$, we immediately obtain the assertion. If $\sigma_f^2>0$, we obtain 
\begin{align*}
\Var \Big(\sum_{j=1}^q S_{n,j}^+\Big)
&=  
q \Big \lbrace \Var\Big( \frac{\sqrt{m}}{n} \sum_{i=1}^{n} f_{r,i}(\bm{Z}_{r,i}) \Big) +o(1) \Big \rbrace 
=  
q\{ \sigma_f^2 + o(1) \},
\end{align*}
where the $o$ term is due to \eqref{eq:proof_clt_sld_decomp} and $\Var(m^{-1/2} \sum_{j=1}^q S_{n,j}^-) = o(1).$
Moreover, by property (iii), we have $\sup_{j \in \N} \Exp[\abs{S_{n,j}^+}^{2+\nu}] = O((m^\ast)^{1+\nu/2})$. As a consequence,
\begin{align*}
\frac{\sum_{j=1}^q \Exp[ \abs{S_{n,j}^+}^{2+\nu} ]}{ \{ \Var(\sum_{j=1}^q S_{n,j}^+) \}^{1+\nu/2} 
}
\lesssim 
\frac{q(m^\ast)^{1+\nu/2}}{(q(\sigma_f^2+o(1)))^{1+\nu/2}} 
\lesssim \frac{(m^\ast)^{1+\nu}}{m^{v/2}} = o(1)
\end{align*}
by choice of $m^\ast.$ Ljapunvov's central limit theorem implies the assertion.

It remains to prove \eqref{eq:csigma}. For $k\in \{1, \dots, m \}$, let $A_k := \sum_{s \in I_k} f_{r,s}(\bm{Z}_{r,s})$ and note that $\sum_{i=1}^{n}f_{r,i}(\bm{Z}_{r, i}) = \sum_{k=1}^{m}A_k$. Here, Assumption (i) implies stationarity of $(A_k)_k$, whence
\[
\sigma_n^2
= 
\frac{1}{nr} \Var\Big(\sum_{k=1}^{m}A_k\Big)
=
\frac{m}{nr} \{ \Var(A_1) + 2\, \Cov(A_1, A_2)\} + \frac{R_n}{nr}, 
\]
where $R_n := -2\Cov(A_1, A_2) + 2\sum_{k=2}^{m-1}(m-k)\Cov(A_1, A_{1+k})$. Lemma 3.11 in \cite{DehPhi02}, together with Assumptions (ii) and (iii), implies that 
\begin{align*}
\frac{1}{nr} |R_n| 
&\lesssim 
\frac{r^2}{nr} + \frac{m^2 r^2}{nr} \alpha^{\frac{\nu}{2+\nu}}(r) 
\lesssim
\frac{1}{m} + \left( m^{1+2/\nu} \alpha(r) \right)^{\nu/(2+\nu)} = o(1),
\end{align*}
by assumption. Hence $\sigma^2_n = r^{-2} \{\Var(A_1) + 2\Cov(A_1, A_2)\} + o(1)$. Define, for $\xi, \xi^\prime \geq 0$,
\[
g_n(\xi, \xi^\prime) := \Cov\big(f_{r, \xi_r}(\bm{Z}_{r, \xi_r}) , f_{r, \xi'_r}(\bm{Z}_{r, \xi'_r})\big),
\]
and note that, by \eqref{eq:wl2}, Assumption (iii) and the continuous mapping theorem, 
\[
\lim_{n \to \infty} g_n(\xi, \xi^\prime) 
=
g(\xi, \xi^\prime) 
:= 
\Cov \big( f(\bm{Z}_{1,|\xi -\xi^\prime|} ), f(\bm{Z}_{2,|\xi -\xi^\prime|} ) \big).
\]
The dominated convergence theorem implies
\[
\frac{\Var(A_1)}{r^2} = \int_0^1\int_0^1 g_n(\xi, \xi^\prime) \on{d}\!\xi \on{d}\!\xi^\prime \to 
\int_0^1\int_0^1 g(\xi, \xi^\prime) \on{d}\!\xi \on{d}\!\xi^\prime,
\]
as by (ii) and (iii) $g_n(\xi, \xi^\prime)$ may be bounded uniformly in $n, \xi, \xi^\prime.$ 
Similarly we obtain $r^{-2} \Cov(A_1, A_2) \to \int_0^1 \int_1^2 g(\xi, \xi^\prime) \on{d}\!\xi \on{d}\!\xi^\prime.$ We may finally proceed as in the proof of Lemma B.9
in the supplement of \cite{BucZan23} to obtain $\lim_n \sigma^2_n = \sigma^2_f.$
\end{proof}

\subsection{Sliding blocks - non-stationary case}

The following theorem is an adaptation of Theorem~\ref{thm:slid_gws} to the non-stationary setting of Section~\ref{sec:ext}.

\begin{theorem}\label{thm:slid_gws-ns}
Suppose that the sampling scheme from Condition~\ref{cond:s2} is met and that the underlying time-series $(\bm Y_t)_t$ satisfies Conditions~\ref{cond:ser_dep}(a), (b) and  $m_n^{1+\omega}\alpha(r_n) \to 0$ for some $\omega>0$.
For each $r=r_n$, let $\mc{F}_r = \lbrace f_{r,i} \colon \R^d \to \R \mid i \in \N \rbrace$ be a family of deterministic maps satisfying Conditions (i) - (iv) of Theorem~\ref{thm:slid_gws}. Then,
for $n \to \infty,$
\begin{align*}
\frac{\sqrt{m}}{n} \sum_{i=1}^{n} f_{r,i}( \bm{Z}_{r,i})\wconv 
\mathcal N(0, \sigma_f^2), \quad
\sigma_f^2 :=
2\int_0^1\Cov(f(\bm{Z}_{1, \xi}), f(\bm{Z}_{2, \xi})) \diff\xi.   
\end{align*}
\end{theorem}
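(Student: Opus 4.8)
The plan is to run the proof of Theorem~\ref{thm:slid_gws} essentially line by line, replacing only the two places where full stationarity and $\alpha$-mixing of the underlying time series were used by the cleaner independence structure that Condition~\ref{cond:s2} supplies. I would adopt verbatim the notation of that proof: assume $n=mr$, set $I_j:=\{(j-1)r+1,\dots,jr\}$, choose $m^\ast=m_n^\ast$ with $3\le m^\ast\le m$, $m^\ast\to\infty$ and $m^\ast=o(m^{\nu/(2(1+\nu))})$, put $q:=m/m^\ast$, split each super-block of $m^\ast$ consecutive $I_j$'s into a big block $J_j^+$ (its first $m^\ast-2$ members) and a small block $J_j^-$ (its last two members), and write $S_{n,j}^\pm:=\sqrt{q/(nr)}\sum_{s\in J_j^\pm}f_{r,s}(\bm Z_{r,s})$, so that the decomposition \eqref{eq:proof_clt_sld_decomp} holds.

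The crucial structural observation, and the one genuinely new ingredient, is that the block sums $A_k:=\sum_{s\in I_k}f_{r,s}(\bm Z_{r,s})$ form a \emph{stationary, $1$-dependent} sequence here. Indeed, for a start index $s=(k-1)r+p$ with $p\in\{1,\dots,r\}$, property~(i) gives $f_{r,s}=f_{r,p}$, while under Condition~\ref{cond:s2} the block maximum is $\bm M_{r,s}=\max\{\bm Y_{k,p},\dots,\bm Y_{k,r},\bm Y_{k+1,1},\dots,\bm Y_{k+1,p-1}\}$, which depends only on seasons $k$ and $k+1$. Since the seasons are i.i.d.\ copies, $A_k=\Phi(\text{season }k,\text{season }k+1)$ for one fixed measurable map $\Phi$; hence $(A_k)_k$ is a fixed function of consecutive pairs of i.i.d.\ blocks, so it is stationary with $\Cov(A_1,A_{1+k})=0$ for every $k\ge2$, and $(S_{n,j}^\pm)_j$ inherits stationarity. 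Moreover, distinct big blocks $J_j^+$ and $J_{j'}^+$ depend on disjoint sets of seasons (they are separated by the season underlying the intervening small block), so $S_{n,1}^+,\dots,S_{n,q}^+$ are \emph{exactly independent}. This replaces the characteristic-function coupling used in the stationary proof; the hypothesis $m^{1+\omega}\alpha(r_n)\to0$ is then needed only to control the within-season dependence that enters conditions~(iii) and~(iv).

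With this in hand the two main estimates follow as before, in fact with small simplifications. For the small blocks, stationarity of $(S_{n,j}^-)$ and the moment bound~(iii) give $\Var(S_{n,1}^-)=O(1/m^\ast)=o(1)$, and by $1$-dependence the cross-covariances $\Cov(S_{n,1}^-,S_{n,1+k}^-)$ vanish once the super-blocks no longer share a season (guaranteed by $m^\ast\ge3$), whence $q^{-1/2}\sum_{j=1}^q S_{n,j}^-=o_{\Pm}(1)$ directly. For the big blocks I would invoke Ljapunov's central limit theorem without any coupling, using their exact independence: property~(iii) yields $\sup_j\Exp[|S_{n,j}^+|^{2+\nu}]=O((m^\ast)^{1+\nu/2})$, so the Ljapunov ratio is $\lesssim (m^\ast)^{1+\nu}/m^{\nu/2}=o(1)$ by the choice of $m^\ast$ (the degenerate case $\sigma_f^2=0$ being handled separately by showing $\sigma_n^2\to0$).

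The step I expect to require the most care is the variance convergence $\sigma_n^2\to\sigma_f^2$, i.e.\ \eqref{eq:csigma}. Here $1$-dependence again helps: $\Var\bigl(\sum_{k=1}^m A_k\bigr)=m\Var(A_1)+2(m-1)\Cov(A_1,A_2)$ \emph{exactly}, so $\sigma_n^2=r^{-2}\{\Var(A_1)+2\Cov(A_1,A_2)\}+o(1)$ with no mixing remainder to bound. Writing $r^{-2}\Var(A_1)=\int_0^1\!\int_0^1 g_n(\xi,\xi')\,\diff\xi\diff\xi'$ and $r^{-2}\Cov(A_1,A_2)=\int_0^1\!\int_1^2 g_n(\xi,\xi')\,\diff\xi\diff\xi'$ with $g_n(\xi,\xi'):=\Cov(f_{r,\xi_r}(\bm Z_{r,\xi_r}),f_{r,\xi'_r}(\bm Z_{r,\xi'_r}))$, condition~\eqref{eq:wl2} together with~(iii) and the continuous mapping theorem yields the pointwise limit $g_n\to g$ with $g(\xi,\xi')=\Cov(f(\bm Z_{1,|\xi-\xi'|}),f(\bm Z_{2,|\xi-\xi'|}))$, while~(ii)--(iii) furnish a uniform bound justifying dominated convergence. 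The delicate bookkeeping is the final reassembly of $\int_0^1\!\int_0^1 g+2\int_0^1\!\int_1^2 g$ into $2\int_0^1\Cov(f(\bm Z_{1,\xi}),f(\bm Z_{2,\xi}))\,\diff\xi$: this uses that $g$ depends on $(\xi,\xi')$ only through $|\xi-\xi'|$ and that $g$ vanishes for $|\xi-\xi'|>1$ because $G_\xi$ factorizes into a product for $\xi>1$; I would carry it out as in the change-of-variables argument of Lemma~B.9 in the supplement of \cite{BucZan23}. Ljapunov's theorem applied to the independent big blocks then gives the asserted limit.
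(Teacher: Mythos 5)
Your proposal is correct and follows essentially the same route as the paper, whose proof of this theorem simply defers to that of Theorem~\ref{thm:slid_gws} with the single remark that independence of $S_{n,1}^+,S_{n,2}^+,\dots$ is now a direct consequence of the sampling scheme. Your additional observation that the block sums $A_k$ form a stationary $1$-dependent sequence (each being a fixed functional of two consecutive i.i.d.\ seasons) is accurate and lets you replace the mixing-based remainder bounds by exact identities, but it does not change the argument in substance.
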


\begin{proof}
The proof is essentially the same as for Theorem~\ref{thm:slid_gws}, with the following simple adaptation: independence of $S_{n,1}^+, S_{n,2}^+, ...$ is a direct consequence of the imposed sampling scheme.
\end{proof}

The following result is an extension of Lemma 2.4 from \cite{BucZan23} to multivariate time series. 

\begin{lemma}\label{lem:as_stat_Zrxi}
Suppose the sampling scheme from Condition \ref{cond:s2} is met and that the underlying time series $(\bm Y_t)_t$ satisfies Conditions \ref{cond:ser_dep}(a) and (b). Then, for every $\xi \geq 0$ and $\bm x \in \R^d$,
\[
\lim_{n \to \infty} \Pm( \bm{Z}_{r, \xi_r} \leq \bm x) = G(\bm x),
\]
with $G$ from Condition~\ref{cond:mda} and with $\xi_r := 1+ \flo{r\xi}$.
\end{lemma}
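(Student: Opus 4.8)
The plan is to reduce the claim to a computation with two \emph{independent} season-wise maxima and the homogeneity of the stable tail dependence function $L$. Write $\xi=k+\phi$ with $k=\flo{\xi}$ a nonnegative integer and $\phi=\xi-k\in[0,1)$, so that $\xi_r=1+kr+\flo{r\phi}$. For $n$ large enough, the block $\bm M_{r,\xi_r}=\max(\bm X_{\xi_r},\dots,\bm X_{\xi_r+r-1})$ is built from the last $k_1:=r-\flo{r\phi}$ observations of the $(k+1)$-th season and the first $k_2:=\flo{r\phi}$ observations of the $(k+2)$-th season, and it lies in a single season when $\phi=0$. Since the seasons are i.i.d.\ copies of the same stationary series (Condition~\ref{cond:s2}), the law of $\bm Z_{r,\xi_r}$ is independent of $k$, so I may take $k=0$; the case $\phi=0$ is precisely Condition~\ref{cond:mda}, hence I assume $\phi\in(0,1)$. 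Writing $\bm M_{r,\xi_r}=\max(\bm N_1,\bm N_2)$ componentwise with $\bm N_1:=\max(\bm Y_{1,\xi_r},\dots,\bm Y_{1,r})$ and $\bm N_2:=\max(\bm Y_{2,1},\dots,\bm Y_{2,\flo{r\phi}})$, the two maxima are independent, whence, with $\bm a_r\bm x+\bm b_r$ denoting the vector with $j$-th entry $a_r^{(j)}x^{(j)}+b_r^{(j)}$,
\[
\Pm(\bm Z_{r,\xi_r}\le\bm x)=\Pm(\bm N_1\le\bm a_r\bm x+\bm b_r)\,\Pm(\bm N_2\le\bm a_r\bm x+\bm b_r).
\]

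Next I would show each factor converges to a power of $G$. For $\bm N_1$, within-season stationarity gives $\bm N_1\stackrel{d}{=}\bm M_{k_1,1}$ with $k_1/r\to1-\phi$, so Condition~\ref{cond:mda} yields $(\bm N_1-\bm b_{k_1})/\bm a_{k_1}\wconv\bm Z\sim G$. Using \eqref{eq:rvscale} with $s=1-\phi$ (valid for the sequence $k_1$ by the local uniform convergence of the rescaling sequences exploited in the proof of Lemma~\ref{lem:joint_conv_short_blocks}), one obtains componentwise $a_r^{(j)}/a_{k_1}^{(j)}\to(1-\phi)^{-\gamma^{(j)}}$ and $(b_r^{(j)}-b_{k_1}^{(j)})/a_{k_1}^{(j)}\to\{(1-\phi)^{-\gamma^{(j)}}-1\}/\gamma^{(j)}$, so the rescaled thresholds converge to $\tilde x^{(j)}:=(1-\phi)^{-\gamma^{(j)}}x^{(j)}+\{(1-\phi)^{-\gamma^{(j)}}-1\}/\gamma^{(j)}$. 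Since $G$ is continuous, uniform convergence of c.d.f.s gives $\Pm(\bm N_1\le\bm a_r\bm x+\bm b_r)\to G(\tilde{\bm x})$. A direct computation shows $(1+\gamma^{(j)}\tilde x^{(j)})^{-1/\gamma^{(j)}}=(1-\phi)(1+\gamma^{(j)}x^{(j)})^{-1/\gamma^{(j)}}$ (with the usual convention when $\gamma^{(j)}=0$), so by the representation $G(\bm x)=\exp\{-L((1+\gamma^{(1)}x^{(1)})^{-1/\gamma^{(1)}},\dots,(1+\gamma^{(d)}x^{(d)})^{-1/\gamma^{(d)}})\}$ and homogeneity (L1) of $L$,
\[
G(\tilde{\bm x})=\exp\!\Big\{-(1-\phi)L\big((1+\gamma^{(1)}x^{(1)})^{-1/\gamma^{(1)}},\dots,(1+\gamma^{(d)}x^{(d)})^{-1/\gamma^{(d)}}\big)\Big\}=G(\bm x)^{1-\phi}.
\]
The identical argument applied to $\bm N_2$ (with $k_2/r\to\phi$) gives the second factor the limit $G(\bm x)^{\phi}$, and multiplying yields $\lim_n\Pm(\bm Z_{r,\xi_r}\le\bm x)=G(\bm x)^{1-\phi}G(\bm x)^{\phi}=G(\bm x)$, as claimed.

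The conceptual heart, and the only step where $d\ge2$ genuinely enters, is the identity $G(\tilde{\bm x})=G(\bm x)^{1-\phi}$: it rests on the pointwise relation $(1+\gamma^{(j)}\tilde x^{(j)})^{-1/\gamma^{(j)}}=(1-\phi)(1+\gamma^{(j)}x^{(j)})^{-1/\gamma^{(j)}}$ combined with homogeneity of $L$, which turns the two fractional-length maxima into the powers $G^{1-\phi}$ and $G^{\phi}$ and makes the non-stationarity cancel. I expect the main nuisance to be the rescaling bookkeeping: the season maxima are naturally normalised by $\bm a_{k_i},\bm b_{k_i}$ rather than by $\bm a_r,\bm b_r$, which forces one to pass between the two normalisations via \eqref{eq:rvscale} and to justify convergence of the c.d.f.\ of $\bm N_i$ along moving thresholds using continuity of $G$. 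This parallels the three-factor computation in the proof of Lemma~\ref{lem:overlap_wconv}, the difference being that here the two season-parts are genuinely independent, so the overlap term appearing in $L_\xi$ does not arise.
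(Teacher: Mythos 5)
Your proposal is correct and follows essentially the same route as the paper: reduce to $\xi\in(0,1)$ by the i.i.d.\ seasons, split the block into the two independent season-wise maxima of approximate lengths $r(1-\xi)$ and $r\xi$, pass between the normalisations via \eqref{eq:rvscale}, and identify each limiting factor as $G^{1-\xi}(\bm x)$ and $G^{\xi}(\bm x)$ through the max-stability identity $G\big[\big(x^{(j)}/s^{\gamma^{(j)}}+(s^{-\gamma^{(j)}}-1)/\gamma^{(j)}\big)_j\big]=G^s(\bm x)$, which the paper likewise derives from \eqref{eq:evc} and the homogeneity (L1) of $L$. The only difference is presentational: the paper delegates the two-factor decomposition to the analogue of Lemma 2.4 in the cited reference, whereas you spell it out explicitly.
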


\begin{proof}
Note first that for univariate $x \in \R$ and $s > 0, \gamma \in \R$ we have $G_\gamma(x/s^\gamma + (s^{-\gamma}-1)/\gamma) = G_\gamma(x)^s.$ 
This implies, for $\bm x \in \R^d, \bm \gamma \in \R^d$, 
\begin{align*}
 G\Big[ \Big(\tfrac{x^{(i)}}{s^{\gamma^{(i)}}} + \tfrac{s^{-\gamma^{(i)}} -1}{\gamma^{(i)}}\Big)_{i=1,\ldots,d}\Big] 
  &=  C \Big[ \Big(G_{\gamma^{(i)}}\Big(\tfrac{x^{(i)}}{s^{\gamma^{(i)}}} + \tfrac{s^{-\gamma^{(i)}} -1}{\gamma^{(i)}}\Big)\Big)_{i=1,\ldots,d}\Big]\\
& =  C^s \Big[\Big(G_{\gamma^{(i)}}\big(x^{(i)}\big)\Big)_{i=1,\ldots d}\Big]
=  G^s(\bm x),
\end{align*}
by \eqref{eq:evc} and (L1) from Condition~\ref{cond:mda}.

By piecewise stationarity and Condition \ref{cond:mda} it suffices to show the result for $\xi \in (0,1).$ Analogous to the proof of Lemma 2.4 from \cite{BucZan23} we have 
\begin{align*}
&\phantom{{}={}} \lim_{r \to \infty} \Pm( \bm{Z}_{r, \xi_r} \leq \bm x) \\
&=  G\Big[ \Big(\tfrac{x^{(i)}}{(1-\xi)^{\gamma^{(i)}}} + \tfrac{(1-\xi)^{-\gamma^{(i)}} -1}{\gamma^{(i)}}\Big)_{i = 1, \ldots, d}\Big]
\cdot G\Big[ \Big(\tfrac{x^{(i)}}{\xi^{\gamma^{(i)}}} + \tfrac{\xi^{-\gamma^{(i)}} -1}{\gamma^{(i)}}\Big)_{i=1, \ldots, d}\Big]\\
&=  G(\bm x),
\end{align*}
where the last equality follows from the identity in the previous display.
\end{proof}

\begin{lemma}\label{lem:overlap_wconv_S2}
Suppose the sampling scheme from Condition \ref{cond:s2} is met and that the underlying time series $(\bm Y_t)_t$ satisfies Conditions \ref{cond:ser_dep}(a) and (b). Then, for any $\xi, \xi^\prime \geq 0$ and $\bm{x},\bm{y} \in \R^d$,
\begin{equation*}
\lim_{n \to \infty} \Pm(\bm{Z}_{r,\xi_r} \leq \bm{x}, \bm{Z}_{r,\xi^\prime_r} \leq \bm{y} ) = G_{|\xi-\xi^\prime|}(\bm{x}, \bm{y}).
\end{equation*}
\end{lemma}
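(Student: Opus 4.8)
The plan is to adapt the proof of the stationary analogue, Lemma~\ref{lem:overlap_wconv}, replacing global stationarity by the combination of exact independence \emph{across} seasons (Condition~\ref{cond:s2}) and $\alpha$-mixing \emph{within} a season (Condition~\ref{cond:ser_dep}(b)). Since $G_\delta(\bm x,\bm y)=G_\delta(\bm y,\bm x)$ by the symmetry of $L_\delta$ in \eqref{eq:lxi}, and since shifting both block-start indices by a full season length $r$ maps i.i.d.\ seasons to i.i.d.\ seasons and hence leaves the joint law of $(\bm Z_{r,\xi_r},\bm Z_{r,\xi'_r})$ unchanged, I may assume $\xi\le\xi'$ and $\xi\in[0,1)$; write $\delta:=\xi'-\xi=|\xi-\xi'|$. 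The two blocks starting at $\xi_r$ and $\xi'_r$ decompose the relevant index range into three contiguous intervals: the part exclusive to the earlier block (length $\approx r\delta$, constrained by $\le\bm x$), the overlap (length $\approx r(1-\delta)$, constrained by $\le\bm x\wedge\bm y$, componentwise), and the part exclusive to the later block (length $\approx r\delta$, constrained by $\le\bm y$); for $\delta\ge1$ the overlap is empty and the two blocks are disjoint.

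The first key step is to show that $\Pm(\bm Z_{r,\xi_r}\le\bm x,\bm Z_{r,\xi'_r}\le\bm y)$ factorizes asymptotically into the product over these three interval maxima. Following the decoupling used in the proofs of Lemma~\ref{lem:overlap_wconv} and Lemma~\ref{lem:switch_to_indep}, I would delete a thin strip of length $\ell=\ell_n$ at each interface (negligible because removing $\ell=o(r)$ observations does not change the limiting power of $G$), after which within-season adjacent intervals are separated by a gap of length $\ell$ and decouple up to an error controlled by $\alpha(\ell)$ via Condition~\ref{cond:ser_dep}(b); intervals lying in different seasons are already exactly independent by Condition~\ref{cond:s2}. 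The second step is to identify each factor: a contiguous interval of length $\approx r\lambda$ may cross one or more season boundaries, but the pieces on either side of a boundary are independent and, after rescaling by $(\bm a_r,\bm b_r)$, each converges to the corresponding power of $G$; since these powers multiply to $G^\lambda$ exactly as in the proof of Lemma~\ref{lem:as_stat_Zrxi}, the straddling is immaterial and the interval maximum converges to $G\big((z^{(i)}/\lambda^{\gamma^{(i)}}+(\lambda^{-\gamma^{(i)}}-1)/\gamma^{(i)})_i\big)=G^\lambda(\bm z)$ at the transformed argument.

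Collecting the three factors, for $\delta<1$ I obtain the limit
\[
G^{\delta}(\bm x)\,G^{\delta}(\bm y)\,G^{1-\delta}(\bm x\wedge\bm y),
\]
which is precisely the right-hand side of \eqref{eq:gxi2} with $\xi$ replaced by $\delta$; the algebraic computation carried out there (using \eqref{eq:gxi} and \eqref{eq:lxi}) then identifies it with $G_\delta(\bm x,\bm y)=G_{|\xi-\xi'|}(\bm x,\bm y)$. For $\delta\ge1$ the blocks are disjoint, the factorization yields $G(\bm x)G(\bm y)$, and since $L_\delta(\bm x,\bm y)=L(\bm x)+L(\bm y)$ for $\delta\ge1$ this again equals $G_\delta(\bm x,\bm y)$. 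I expect the main obstacle to be the non-stationary factorization step: unlike in Lemma~\ref{lem:overlap_wconv}, the three intervals need not sit in a single stationary stretch, so one must combine the within-season mixing decoupling with the across-season independence while carefully tracking where the season boundaries fall inside each interval. By contrast, the marginal-limit and algebraic-identification steps are essentially routine once Lemma~\ref{lem:as_stat_Zrxi} and the computation in Lemma~\ref{lem:overlap_wconv} are in hand.
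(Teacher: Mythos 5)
Your proposal is correct and follows essentially the same route as the paper, whose proof of this lemma consists of the single remark that it is "a slight adaptation of the proof of Lemma~\ref{lem:overlap_wconv} using standard clipping techniques and Lemma~\ref{lem:as_stat_Zrxi}" --- precisely the three-interval decomposition, $\ell$-strip clipping combined with across-season independence, and identification of each factor as a power of $G$ that you spell out. Your write-up simply makes explicit the details the paper leaves to the reader.
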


\begin{proof}
This is a slight adaption of the proof of Lemma \ref{lem:overlap_wconv} using standard clipping techniques and Lemma \ref{lem:as_stat_Zrxi}. 
\end{proof}

\begin{lemma}\label{lem:overlap_h1_wconv_S2}
Suppose the sampling scheme from Condition \ref{cond:s2} is met and that the underlying time series $(\bm Y_t)_t$ satisfies Conditions \ref{cond:ser_dep}(a) and (b).  Furthermore, let $h$ be $\dlambda^{2d}$-a.e.\ continuous and bounded on compact sets and suppose that Condition \ref{cond:h_int_S2}(a) is met. Then, for $\xi, \xi^\prime \in [0, \infty)$
\begin{equation*}
\left( h_{1,r, \xi_r}(\bm{Z}_{r, \xi_r}), h_{1,r,\xi^\prime_r}(\bm{Z}_{r, \xi^\prime_r}) \right)
\wconv \left( h_1(\bm{Z}_{1, | \xi - \xi^\prime |}), h_1(\bm{Z}_{2, | \xi - \xi^\prime |}) \right)
\end{equation*}
and marginal moments of order $p < 2 +\nu,$ with $p \in \N,$ converge. 
\end{lemma}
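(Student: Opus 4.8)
The plan is to repeat the proof of Lemma~\ref{lem:h_1,r_conv} in a bivariate, piecewise-stationary guise, feeding in the joint limit law $G_{|\xi-\xi'|}$ from Lemma~\ref{lem:overlap_wconv_S2} in place of the product coupling used there. Throughout write $\zeta := |\xi-\xi'|$ and recall $h_{1,r,i}(\bm x) = \Exp[h(\bm x, \bm Z_{r,i})] - \vartheta_{r,i,i}$ with $\vartheta_{r,i,j}$ from \eqref{eq:thetaij}. As a preliminary reduction, I would note that $\vartheta_{r,\xi_r,\xi_r} = \Exp[h(\bm Z_{r,\xi_r}, \tilde{\bm Z}_{r,\xi_r})] \to \vartheta$: indeed $\bm Z_{r,\xi_r} \wconv \bm Z \sim G$ by Lemma~\ref{lem:as_stat_Zrxi}, so $h(\bm Z_{r,\xi_r}, \tilde{\bm Z}_{r,\xi_r}) \wconv h(\bm Z, \tilde{\bm Z})$ by independence and the continuous mapping theorem, and Condition~\ref{cond:h_int_S2}(a) supplies the uniform integrability needed for Example~2.21 in \cite{Van98} (this is the diagonal analogue of Lemma~\ref{lem:varthetar}); the same holds with $\xi'_r$. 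Since both centering constants converge to $\vartheta$, Slutsky's theorem reduces the claim to the joint convergence of the uncentered projections $g_{r,i}(\bm x) := \Exp[h(\bm x, \bm Z_{r,i})]$, that is,
\[
T_r := \big( g_{r,\xi_r}(\bm Z_{r,\xi_r}),\, g_{r,\xi'_r}(\bm Z_{r,\xi'_r}) \big) \wconv T := \big( g(\bm Z_{1,\zeta}),\, g(\bm Z_{2,\zeta}) \big),
\]
where $g(\bm x) := \Exp[h(\bm x, \bm Z)] = h_1(\bm x)+\vartheta$ and $(\bm Z_{1,\zeta},\bm Z_{2,\zeta}) \sim G_\zeta$.

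To prove $T_r \wconv T$ I would apply Wichura's theorem \citep[Theorem~4.2]{bill_conv_pm} to the $\R^2$-valued vectors, using the truncations
\begin{align*}
T_r(b) &:= \Big( \int_B h(\bm Z_{r,\xi_r}, \bm y)\, \dPm_{\bm Z_{r,\xi_r}}(\bm y),\ \int_B h(\bm Z_{r,\xi'_r}, \bm y)\, \dPm_{\bm Z_{r,\xi'_r}}(\bm y) \Big), \\
T(b) &:= \Big( \int_B h(\bm Z_{1,\zeta}, \bm y)\, \dPm_{\bm Z}(\bm y),\ \int_B h(\bm Z_{2,\zeta}, \bm y)\, \dPm_{\bm Z}(\bm y) \Big),
\end{align*}
with $B = B(b) = [-b,b]^d$, where I used that $G_\zeta$ has both margins equal to $G$. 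The first leg, $T_r(b) \wconv T(b)$ for fixed $b$ as $n\to\infty$, would follow from the extended continuous mapping theorem (Theorem~1.11.1 in \cite{VanWel96}) applied to the joint weak convergence $(\bm Z_{r,\xi_r}, \bm Z_{r,\xi'_r}) \wconv (\bm Z_{1,\zeta}, \bm Z_{2,\zeta})$ of Lemma~\ref{lem:overlap_wconv_S2}. The truncated map acts coordinatewise, so the required continuous convergence decouples into the one-coordinate statement already contained in Lemma~\ref{lem:h_1,r_conv}: for $\bm x_n \to \bm x$ one has $\int_B h(\bm x_n, \bm y)\,\dPm_{\bm Z_{r,\xi_r}}(\bm y) \to \int_B h(\bm x, \bm y)\,\dPm_{\bm Z}(\bm y)$, which rests on $\bm Z_{r,\xi_r}\wconv \bm Z$ (Lemma~\ref{lem:as_stat_Zrxi}), the $\dlambda^{2d}$-a.e.\ continuity of $(\bm x, \bm y)\mapsto h(\bm x,\bm y)\ind{\bm y \in B}$, and the uniform second-moment bound coming from the boundedness of $h$ on the compact set $A\times B$, where $A$ is any compact set containing $(\bm x_n)_n$.

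The two remaining legs are truncation estimates, handled coordinatewise exactly as in Lemma~\ref{lem:h_1,r_conv}. For $T(b) \wconv T$ as $b\to\infty$, the $j$-th coordinate obeys, with $\tilde{\bm Z}\sim G$ independent of $\bm Z_{j,\zeta}$,
\[
\Exp\Big| g(\bm Z_{j,\zeta}) - \int_B h(\bm Z_{j,\zeta}, \bm y)\,\dPm_{\bm Z}(\bm y) \Big|
\le \Exp\big[ |h(\bm Z_{j,\zeta}, \tilde{\bm Z})|\, \ind{\tilde{\bm Z} \in B(b)^c} \big]
\le C\, \Pm(\tilde{\bm Z} \notin B(b))^{1/2} \to 0
\]
by Cauchy--Schwarz, the constant $C = \|h(\bm Z_{j,\zeta},\tilde{\bm Z})\|_{L^2}$ being finite by Condition~\ref{cond:h_int_S2}(a) and weak convergence. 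For $\lim_b \limsup_n \Pm(\|T_r(b)-T_r\| > \varepsilon)=0$, Markov's and Cauchy--Schwarz's inequalities bound each coordinate by $\varepsilon^{-1}\Exp[ |h(\bm Z_{r,\xi_r}, \tilde{\bm Z}_{r,\xi_r})|\,\ind{\tilde{\bm Z}_{r,\xi_r}\in B(b)^c} ]$, whose $\limsup_n$ is at most $C\,\Pm(\bm Z \notin B(b))^{1/2}$ by the Portmanteau theorem and Condition~\ref{cond:h_int_S2}(a), and this vanishes as $b\to\infty$. Wichura's theorem then gives $T_r \wconv T$, and re-inserting the centering constants via Slutsky's theorem yields the asserted joint convergence.

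For the marginal moment convergence of orders $p<2+\nu$, I would combine the marginal weak convergence just obtained with uniform integrability of $|h_{1,r,\xi_r}(\bm Z_{r,\xi_r})|^{2+\nu}$: by conditional Jensen, $\Exp[|g_{r,\xi_r}(\bm Z_{r,\xi_r})|^{2+\nu}] \le \Exp[|h(\bm Z_{r,\xi_r}, \tilde{\bm Z}_{r,\xi_r})|^{2+\nu}]$, which is uniformly bounded by Condition~\ref{cond:h_int_S2}(a) (using the $r$-periodicity of the law of $\bm Z_{r,i}$ under Condition~\ref{cond:s2} to reduce the index to $\{1,\dots,r\}$), whence Example~2.21 in \cite{Van98} applies. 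The main obstacle is the first leg: one must verify the continuous-convergence hypothesis of the extended continuous mapping theorem for a map that depends on $n$ simultaneously through the block length $r$ and through the two integrating measures $\Pm_{\bm Z_{r,\xi_r}}, \Pm_{\bm Z_{r,\xi'_r}}$. The decisive simplification is that this map factorizes over the two coordinates, so the verification collapses to the single-coordinate continuous convergence already established in Lemma~\ref{lem:h_1,r_conv}, now driven by the asymptotic stationarity of Lemma~\ref{lem:as_stat_Zrxi} rather than by genuine stationarity.
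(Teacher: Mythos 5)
Your proposal is correct and follows essentially the same route as the paper: truncation to compact sets, the extended continuous mapping theorem driven by the joint convergence of Lemma~\ref{lem:overlap_wconv_S2}, Wichura's theorem to remove the truncation, and uniform integrability from Condition~\ref{cond:h_int_S2}(a) for the moment convergence. The only (harmless) deviations are that you apply Wichura directly to the $\R^2$-valued vector where the paper first reduces to scalars via Cram\'er--Wold, and that you strip off the centering constants $\vartheta_{r,\xi_r,\xi_r}$ by an explicit Slutsky step rather than carrying them through the truncation argument.
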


\begin{proof}
We proceed similar as in the proof of Lemma \ref{lem:h_1,r_conv} and employ the Cramér-Wold Theorem and Wichura's Theorem. Fix $\bm a = (a^{(1)}, a^{(2)}) \in \R^2 \setminus \{\bm 0\}$ and let
    \begin{align*}
        T_n &:= a^{(1)} h_{1,r, \xi_r}(\bm Z_{r,\xi_r}) + a^{(2)} h_{1,r, \xi^\prime_r}(\bm Z_{r,\xi^\prime_r}) \\
        & =
        \int a^{(1)}h(\bm Z_{r,\xi_r},\bm y_1)+ a^{(2)}h(\bm Z_{r,\xi^\prime_r}, \bm y_2) \, \diff \Pm_{\bm{Z}_{r,\xi_r}} \otimes \Pm_{\bm{Z}_{r,\xi^\prime_r}}(\bm y_1, \bm y_2), \\
        T
        & := a^{(1)} h_{1}(\bm Z_{1, |\xi-\xi^\prime|}) + a^{(2)} h_{1}(\bm Z_{2, |\xi-\xi^\prime|}) \\
        & = \int a^{(1)} h(\bm Z_{1, |\xi-\xi^\prime|}, \bm y_1) + 
            a^{(2)} h(\bm Z_{2, |\xi-\xi^\prime|}, \bm y_2) \,\diff \Pm_{\bm{Z}}^{\otimes 2} (\bm y_1, \bm y_2)
    \end{align*}
    and define, for $B := B(b) := [-b,b]^d$ with $b\in\N$,
    \begin{align*}
        T_n(b) &:=  \int_{B \times B} a^{(1)}h(\bm Z_{r,\xi_r},\bm y_1)+ a^{(2)}h(\bm Z_{r,\xi^\prime_r}, \bm y_2) \, \diff \Pm_{\bm{Z}_{r,\xi_r}} \otimes \Pm_{\bm{Z}_{r,\xi^\prime_r}}(\bm y_1, \bm y_2), \\
        T(b) & := \int_{B \times B} a^{(1)} h(\bm Z_{1, |\xi-\xi^\prime|}, \bm y_1) + 
            a^{(2)} h(\bm Z_{2, |\xi-\xi^\prime|}, \bm y_2) \,\diff \Pm_{\bm{Z}}^{\otimes 2} (\bm y_1, \bm y_2).
    \end{align*}
    We may now proceed analogous to the proof of Lemma \ref{lem:h_1,r_conv}, where we use the extended continuous mapping theorem and the weak convergence from Lemma~\ref{lem:overlap_wconv_S2} to show that $T_n(b) \wconv T(b)$ for $n\to\infty$.
\end{proof}

\begin{acks}[Acknowledgments]
This work has been supported by the Deutsche Forschungsgemeinschaft (DFG, German Research Foundation), DFG project 465665892, which is gratefully acknowledged. 
\end{acks}

\bibliographystyle{imsart-number}
\bibliography{biblio}

\newpage

\begin{center}

\setlength{\baselineskip}{25pt}

{\huge \textbf{Supplement to \\
``Limit theorems for non-degenerate
U-statistics of block maxima for time
series''}}

\medskip

{\large \textbf{Axel Bücher and Torben Staud}}

\medskip

\setlength{\baselineskip}{10pt}

\end{center}

\noindent
\textbf{Abstract:} In Section~\ref{sec:ext_alpha} we provide an extension of Theorem~\ref{thm:U_conv} to strong mixing. In Section~\ref{sec:varformula}, we provide explicit formulas for some asymptotic variances from Section~\ref{sec:examples} in the main paper.

\appendix

\section{Limit results under strong mixing}\label{sec:ext_alpha}

The proof of Theorem~\ref{thm:U_conv} strongly relies on Berbee's coupling Lemma, which is a coupling result for beta-mixing time series \citep{berbee}. In the case of strong mixing, there is generally no similar coupling result that yields equality between the original and the coupling variables with high probability \citep{Deh83}. To the best of our knowledge, the strongest comparable result for alpha-mixing is due to \cite{Bra83}, which yields a coupling with a small $L^1$-distance. When deriving respective asymptotic results for U-statistics, Bradley's coupling construction makes it necessary to impose additional continuity assumptions on the kernel. Subsequently, we closely follow the concept of $\Pm$-Lipschitz continuity from \cite{dehlingPLip} which has been applied to U-statistics in the strongly mixing case in \cite{DehWen10a}. 

As common kernels with multivariate arguments do not satisfy the following regularity conditions, we will for reasons of simplicity assume $d=1$. 

\begin{condition}[Regularity of the kernel function] 
\label{cond:h_reg_alpha}
There exists a non-negative function $g \colon \R^3 \to [0,\infty)$ that is $\dlambda^3$-almost everywhere continuous and a $\kappa > 1$ such that the following two conditions are met:
\begin{compactenum}
\item[(a)] For all $(x_1,x_2,y) \in \R^3$, we have
\[  
\abs{h(x_1,y)-h(x_2,y)} \leq \abs{x_1 - x_2} g(x_1, x_2, y).
\]
\item[(b)] With $\mc{F}_r := \lbrace (Y_1,Y_2,Y_3) \colon \forall j\in\{1,2,3\} \, \exists i_j \in \N^3: \Pm_{Y_j} = \Pm_{Z_{r,i_j}}\rbrace$, where the $Y_i$ are random variables, we have
\[
\limsup_{r\to\infty} \sup_{(Y_1,Y_2,Y_3) \in \mc{F}_r} \Exp[g^{\kappa}(Y_1, Y_2, Y_3)] < \infty.
\]
\end{compactenum}
\end{condition}

The uniform integrability condition from Condition~\ref{cond:h_int_std} must be strengthened as follows.

\begin{condition}[Asymptotic integrability]\label{cond:h_int_alpha}
There exist $\nu, \rho > 0 $ with $\nu > 1/(\kappa -1) - 1$ and with  $\kappa$ from Condition \ref{cond:h_reg_alpha} such that Conditions \ref{cond:h_int_std}(a), (b) hold and
\begin{align}
\label{eq:hregal}
\limsup_{r \to \infty} \Exp[|Z_{r,1}|^{\rho}] < \infty.
\end{align}
\end{condition}

In specific applications, Conditions~\ref{cond:h_int_std}(a), (b) may often be reduced from moment constraints as in \eqref{eq:hregal}. Hence, Condition~\ref{cond:h_int_alpha} is not substantially stronger than  Condition~\ref{cond:h_int_std}.
Next, as we weaken the mixing requirements from absolute regularity to strong mixing we need the following stronger assumptions on the mixing rates.

\begin{condition}[Block size and serial dependence]\label{cond:ser_dep_alpha}
The block size sequence $(r_n)_n$ satisfies Conditions \ref{cond:ser_dep}(a) and (b). Moreover, $(n/r_n)^{3/2 + 2/\nu + 1/(2\rho)+ 1/(\rho \nu)} \alpha(r_n) = o(1)$, where $\rho$ and $\nu$ are from Condition \ref{cond:h_int_alpha}. 
\end{condition}

Finally, recalling the definitions of $U_{n,r}^{\mbl}$ in \eqref{eq:u_stat}, of $\theta_r$ in \eqref{eq:thetar},  of $\sigma^2_{\mbl}$ in \eqref{eq:as_vars} and of $\tilde \vartheta_r$ in \eqref{eq:varthetat}, we have the following result.

\begin{theorem}\label{thm:U_conv_alpha}
Suppose Conditions \ref{cond:ker_traf}, \ref{cond:h_reg_alpha}, \ref{cond:h_int_alpha} and \ref{cond:ser_dep_alpha} are met. Furthermore, let $h$ be $\dlambda^{2d}$-a.e.\ continuous and bounded on compact sets. Then, for $\mbl \in \{ \dbl, \sbl\}$, 
\begin{equation*}
\frac{\sqrt{m}}{ f(a_r, b_r)} \cdot ( \Unmb  - \theta_r) \wconv 
\Norm{0, \sigma_{\on{mb}}^2}.
\end{equation*}
If, additionally, the limit $B=\lim_{n\to \infty}B_n$ with $B_n$ from \eqref{eq:bn} exists, then
\begin{equation*}
\frac{\sqrt{m}}{f(a_r, b_r)} \cdot ( \Unmb  - \tilde\vartheta_r )\wconv 
\Norm{B, \sigma_{\on{mb}}^2}.
\end{equation*}
\end{theorem}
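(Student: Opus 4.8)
The plan is to follow the proof of Theorem~\ref{thm:U_conv} step by step, replacing only those arguments that genuinely rely on absolute regularity by their strong-mixing analogues. As before, Condition~\ref{cond:ker_traf} reduces the claim to showing $\sqrt m(\tilUnmb - \vartheta_r) \wconv \Norm{0,\sigma_{\mbl}^2}$ for $\mbl \in \{\dbl,\sbl\}$, and the Hoeffding decomposition \eqref{eq:proof_U_Zerl} splits this into a linear part $L_{n,r}^{\mbl}$ and an asymptotically degenerate part $D_{n,r}^{\mbl}$. The variance inequality $\sigma_{\sbl}^2 \le \sigma_{\dbl}^2$ concerns only the limiting variances and is unaffected by the mixing mode, so it again follows from Lemma~A.10 in \cite{ZouVolBuc21}; likewise, once the centered limit is established, the bias addendum follows from Slutsky's lemma exactly as in the proof of Corollary~\ref{cor:U_conv_bias}. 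Thus the real work lies in the two parts of the decomposition.

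For the linear part I would first point out that none of the relevant arguments in Theorem~\ref{thm:U_conv} actually used Berbee's coupling. In the disjoint case, Lemma~\ref{lem:switch_to_indep} switches to independent block replicates via the characteristic-function comparison of Lemma~3.9 in \cite{DehPhi02}, bounded by $\alpha(\ell)$, together with the clipping estimate of Lemma~\ref{lem:block_diff_L^2_conv}, whose covariance sum is controlled by $\alpha(r)^{\nu/(2+\nu)}$ through Lemma~3.11 in \cite{DehPhi02}. Both are genuinely strong-mixing statements; the only place Condition~\ref{cond:ser_dep}(c) entered was to dominate $\alpha$ by $\beta$, and under Condition~\ref{cond:ser_dep_alpha} the required decay $m\,\alpha(r)^{\nu/(2+\nu)}\to 0$ holds directly, after which Ljapunov's central limit theorem applies verbatim. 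In the sliding case the linear part is handled by Theorem~\ref{thm:slid_gws}, whose hypotheses are already phrased in terms of $\alpha$-mixing ($m^{1+\omega}\alpha(r)\to 0$) and whose weak-convergence input \eqref{eq:wl2} rests on Lemma~\ref{lem:overlap_wconv}, using only Condition~\ref{cond:ser_dep}(a),(b). Hence the linear part carries over with at most a change of mixing rate, and in particular the Lipschitz hypothesis of Condition~\ref{cond:h_reg_alpha} is \emph{not} needed here.

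The degenerate part is where new structure is required, and it is the main obstacle. The goal is again to show that $m\,\ndb^{-4}\sum_{(\bm i,\bm j)\in\JJdb}\Exp[h_{2,r}(Z_{r,i_1},Z_{r,i_2})h_{2,r}(Z_{r,j_1},Z_{r,j_2})]\to 0$ (and its sliding analogue), where $i_1$ is the smallest index and separated by more than $2r$ from all others. For a fixed such tuple I would invoke Bradley's coupling \citep{Bra83} in place of Berbee's, producing a copy $Z_{r,i_1}^\ast$ of $Z_{r,i_1}$ that is independent of $(Z_{r,i_2},Z_{r,j_1},Z_{r,j_2})$, now only with small $L^q$-distance $\|Z_{r,i_1}-Z_{r,i_1}^\ast\|_q$ controlled by a power of $\alpha(r)$ and the moment $\Exp[|Z_{r,1}|^\rho]$ from \eqref{eq:hregal}. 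Writing the covariance as $I_1+I_2$ as in Theorem~\ref{thm:U_conv}, the term $I_2$ still vanishes because $\Exp[h_{2,r}(\,\cdot\,,y)]=0$ and $Z_{r,i_1}^\ast$ is independent. For $I_1$ the cancellation ``off a null set'' is no longer available; instead I would bound the kernel increment through Condition~\ref{cond:h_reg_alpha}(a),
\[
|h(Z_{r,i_1},Z_{r,i_2})-h(Z_{r,i_1}^\ast,Z_{r,i_2})| \le |Z_{r,i_1}-Z_{r,i_1}^\ast|\,g(Z_{r,i_1},Z_{r,i_1}^\ast,Z_{r,i_2}),
\]
and propagate the same Lipschitz bound to $h_{1,r}$, hence to $h_{2,r}$, by integrating the $g$-factor against $\Pm_{Z_{r,1}}$. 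A multi-factor Hölder estimate then distributes the coupling increment, the $g$-factor, and the remaining $h_{2,r}$-factor onto the $L^q$-control of Bradley's coupling, the uniform moment bound on $g$ from Condition~\ref{cond:h_reg_alpha}(b), and the $(2+\nu)$-integrability from Condition~\ref{cond:h_int_alpha}.

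The crux, and the step I expect to be most delicate, is the bookkeeping of these Hölder exponents: one must choose $q<\rho$ (so Bradley applies), a moment exponent $\le 2+\nu$ for $h_{2,r}$, and exponent $\kappa$ for $g$, with reciprocals summing to one — feasibility being exactly what the constraint $\nu>1/(\kappa-1)-1$ in Condition~\ref{cond:h_int_alpha} guarantees — and then verify that the resulting power of $\alpha(r)$, multiplied by the combinatorial factor of order $m$, tends to zero. This is precisely the balance encoded in the rate $(n/r_n)^{3/2+2/\nu+1/(2\rho)+1/(\rho\nu)}\alpha(r_n)=o(1)$ of Condition~\ref{cond:ser_dep_alpha}, so in practice I would reverse-engineer the admissible exponents from that inequality. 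The sliding degenerate part is treated identically after replacing $\JJdb,\ndb$ by $\JJsb,\nsb$ and noting that the discarded near-diagonal tuples number $O((nr)^2)$. Combining the convergent linear part with the negligible degenerate part yields \eqref{eq:proof_u_conv_mbl}, and the theorem follows.
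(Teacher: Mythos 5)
Your overall architecture matches the paper's proof exactly: the same Hoeffding decomposition, the observation that the linear part never used Berbee's coupling (only the characteristic-function and covariance bounds from \cite{DehPhi02}, which are genuine $\alpha$-mixing statements, plus Theorem~\ref{thm:slid_gws} whose hypothesis is already $m^{1+\omega}\alpha(r)\to 0$), the substitution of Bradley's coupling for Berbee's in the degenerate part, the observation that $I_2$ still vanishes by independence, and the use of Condition~\ref{cond:h_reg_alpha} to replace the almost-sure cancellation. The variance comparison and the bias corollary are handled as you say.

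The one step that would fail as literally written is the ``multi-factor H\"older estimate'' for $I_1$ based on ``the $L^q$-control of Bradley's coupling.'' Bradley's theorem does not deliver a useful $L^q$-bound on $|Z_{r,i_1}-Z^\ast_{r,i_1}|$ for $q\ge 1$; it delivers only the tail bound $\Pm(|Z_{r,i_1}-Z^\ast_{r,i_1}|\ge\varepsilon)\le K\,\alpha^{2\rho/(2\rho+1)}(r)\,\varepsilon^{-\rho/(2\rho+1)}$, and integrating this tail gives a finite $q$-th moment only for $q<\rho/(2\rho+1)<1/2$. Consequently you cannot place the coupling increment into a H\"older factor directly. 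What the paper does instead --- and what your argument needs --- is a two-regime split of $I_1$ according to the event $\{|Z_{r,i_1}-Z^\ast_{r,i_1}|<\varepsilon\}$: on that event the Lipschitz bound from Condition~\ref{cond:h_reg_alpha}(a) gives a factor $\varepsilon$ times a product of $g$- and $h_{2,r}$-moments controlled via H\"older with exponents $p=1+1/(1+\nu)<\kappa$ and $q=2+\nu$ (this is exactly where the constraint $\nu>1/(\kappa-1)-1$ is used), yielding $|R_{1,n}|\lesssim\varepsilon$; on the complement, H\"older with $p=(2+\nu)/2$, $q=(2+\nu)/\nu$ gives $|R_{2,n}|\lesssim\Pm(|Z_{r,i_1}-Z^\ast_{r,i_1}|\ge\varepsilon)^{\nu/(2+\nu)}\lesssim\alpha^{2\mu}(r)/\varepsilon^{\mu}$ with $\mu=\rho\nu/\{(2\rho+1)(\nu+2)\}$. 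Optimizing $\varepsilon=\alpha^{2\mu/(\mu+1)}(r)$ produces the covariance bound $\alpha^{(2\rho\nu)/(3\rho\nu+4\rho+\nu+2)}(r)$, and multiplying by the combinatorial factor $m$ is precisely what Condition~\ref{cond:ser_dep_alpha} is calibrated to kill. You correctly anticipate that the exponent bookkeeping is the crux and that it is encoded in Condition~\ref{cond:ser_dep_alpha}, but you defer exactly this computation, and without the $\varepsilon$-truncation the H\"older scheme you describe has no valid factor to absorb the coupling increment.
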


\begin{proof}[Proof of Theorem \ref{thm:U_conv_alpha}]
We use the same Hoeffding decompositon as in the proof of Theorem \ref{thm:U_conv}; see the algebraic identity (\ref{eq:proof_U_Zerl}). Since the proof of the asymptotic normality of $L_n^{\mbl}$ does not make use Condition \ref{cond:ser_dep}(c), the proof also applies in the current setting for $\alpha$-mixing. Moreover, for the disjoint blocks case, it is sufficient to show \eqref{eq:proof_u_conv_deg_part}.
By the same arguments as in the mentioned proof we may restrict attention to the sum over the indices from $\JJdb=\{(\bm i, \bm j) \in \Jdb \times \Jdb: \min(i_2 - i_1, j_1 - i_1) > 2r\}$.
By Condition \ref{cond:h_int_alpha} and Theorem 3 in \cite{Bra83}, after enlarging the probability space if necessary, there exist, for any $(\bm{i}, \bm{j}) \in \JJdb$, random variables $Z^\ast_{r,i_1}$ with the following properties:
\begin{compactenum}[(i)]
\item $Z^\ast_{r,i_1}$ is independent of $(Z_{r,i_2}, Z_{r,j_1}, Z_{r,j_2}),$
\item $Z_{r,i_1}$ and $Z^\ast_{r,i_1}$ have the same distributions,
\item $\forall \varepsilon>0$: $\Pm ( | Z_{r,i_1} - Z^\ast_{r,i_1} |  \geq \varepsilon ) \le K \alpha^{2\rho/(2\rho + 1)}(r) / \varepsilon^{\rho/(2\rho +1)}$
\end{compactenum}
where the constant $K$ does not depend on $(\bm{i}, \bm{j}) \in \JJdb$.
By the same arguments as in the proof of Theorem \ref{thm:U_conv} we have, for any $\varepsilon>0$, 
\begin{align}\label{eq:proof_u_conv_alpha_deg}
&\phantom{{}={}} 
\Exp\big[h_{2,r}(Z_{r,i_1}, Z_{r, i_2}) h_{2,r}(Z_{r,j_1}, Z_{r, j_2})\big]  \nonumber \\
&= \nonumber 
\Exp\big[\big\{ h_{2,r}(Z_{r,i_1}, Z_{r, i_2})- h_{2,r}(Z^\ast_{r,i_1}, Z_{r, i_2}) \big\} h_{2,r}(Z_{r,j_1}, Z_{r, j_2})\big] \\
&=\nonumber  
\Exp\big[\big\{ h_{2,r}(Z_{r,i_1}, Z_{r, i_2})- h_{2,r}(Z^\ast_{r,i_1}, Z_{r, i_2}) \big\} \\
&\hspace{4cm}\nonumber   \times h_{2,r}(Z_{r,j_1}, Z_{r, j_2}) \ind{| Z_{r,i_1} - Z^\ast_{r,i_1} |  < \varepsilon}\big] \\
& \nonumber  \hspace{1cm} + 
\Exp\big[\big\{ h_{2,r}(Z_{r,i_1}, Z_{r, i_2})- h_{2,r}(Z^\ast_{r,i_1}, Z_{r, i_2}) \big\} \\
&\hspace{4cm}\nonumber   \times h_{2,r}(Z_{r,j_1}, Z_{r, j_2}) \ind{| Z_{r,i_1} - Z^\ast_{r,i_1} |  \geq \varepsilon}\big] \\
&\equiv  R_{1,n} + R_{2,n}.
\end{align}
Note that 
\begin{multline*}
 h_{2,r}(Z_{r,i_1}, Z_{r, i_2})- h_{2,r}(Z^\ast_{r,i_1}, Z_{r, i_2}) \\ 
=  h(Z_{r,i_1}, Z_{r,i_2}) - h(Z^\ast_{r,i_1}, Z_{r,i_2}) + \int h(Z_{r,i_1}, y) - h(Z^\ast_{r,i_1}, y) \on{d}\Pm_{Z_{r,1}}(y),    
\end{multline*}
and apply Hölder's inequality with $p = 1+ 1/(1+\nu) < \kappa, \, q = 2+\nu$ to obtain
\begin{align} \label{eq:proof_u_conv_alpha_deg_R1n}
\phantom{{}={}} \nonumber  
|R_{1,n}| 
&\leq \nonumber 
\varepsilon \Exp\Big[ \Big\{  g(Z_{r,i_1}, Z^\ast_{r,i_1}, Z_{r, i_2})  +  \int g(Z_{r,i_1}, Z^\ast_{r,i_1}, y) \on{d}\Pm_{Z_{r,1}}(y) \Big\} \\
&\hspace{5cm}   \times \Big|h_{2,r}(Z_{r, j_1}, Z_{r, j_2}) \Big|\Big] \lesssim \varepsilon,
\end{align}
by Condition \ref{cond:h_reg_alpha}(a),(b) and Condition~\ref{cond:h_int_alpha}. 
Next, another application of Hölder's inequality with $p = (2+\nu)/2, \, q = (2+ \nu)/\nu$ yields 
\begin{align} \label{eq:proof_u_conv_alpha_deg_R2n}
& |R_{2,n}| \lesssim \Pm \left( | Z_{r,i_1} - Z^\ast_{r,i_1} |  \geq \varepsilon \right)^{\nu/(2+\nu)} 
\lesssim \frac{\alpha^{2\mu}(r)}{\varepsilon^\mu},
\end{align}
where $\mu = \rho \nu / \lbrace (2\rho +1 ) (\nu + 2) \rbrace$.
Setting $\varepsilon = \alpha^{2\mu/(\mu + 1)}(r)$, we have, by (\ref{eq:proof_u_conv_alpha_deg}), (\ref{eq:proof_u_conv_alpha_deg_R1n}) and (\ref{eq:proof_u_conv_alpha_deg_R2n}), 
\begin{align*}
    |\Exp[h_{2,r}(Z_{r,i_1}, Z_{r, i_2}) h_{2,r}(Z_{r,j_1}, Z_{r, j_2})]| 
    & \lesssim \alpha^{2\mu/(\mu + 1)}(r) + \alpha^{2\mu- 2\mu^2/(\mu+1)}(r)\\
    & = 2 \alpha^{(2 \rho \nu)/(3\rho \nu + 4\rho + \nu +2)}(r),
\end{align*}
uniformly in $\bm{i}, \bm{j}.$ Hence, we obtain by Condition \ref{cond:ser_dep_alpha}
\begin{align*}
&\phantom{{}={}} \frac{m}{\ndb^4} \sum_{(\bm{i}, \bm{j}) \in \JJdb} \big| \Exp[h_{2,r}(\bm{Z}_{r,i_1}, \bm{Z}_{r,i_2}) h_{2,r}(\bm{Z}_{r,j_1}, \bm{Z}_{r,j_2})] \big| \\
& \lesssim m \alpha^{(2 \rho \nu)/(3\rho \nu + 4\rho + \nu +2)}(r) \\
& = \big(m^{3/2+ 2/\nu + 1/(2\rho) + 1/(\rho \nu)} \alpha(r)\big)^{(2 \rho \nu)/(3\rho \nu + 4\rho + \nu +2)}  = o(1),
\end{align*}
as $\# \JJdb = O(m^4)$. 

The sliding blocks version can be treated similarly, see also the proof of Theorem \ref{thm:U_conv}.

Finally, the statements concerning centering at $\tilde \vartheta_r$ follow by the same arguments as in the proof of Corollary \ref{cor:U_conv_bias}.
\end{proof}

\section{Formulas for the asymptotic variances in Section~\ref{subsec:var}}
\label{sec:varformula}

Write $g_j := \Gamma(1-j\gamma)$ for $\gamma < 1/j, j \in \N$ and let $\zeta(3)$ denote Apery's constant.
\begin{lemma}\label{lem:var_est_asyvar}
Let $\gamma < 1/4.$ The asymptotic variances in Corollary~\ref{cor:var_est} can be written as
\begin{align}\label{eq:var_est_asyvar_db}
\sigma^2_{\dbl} = 
\begin{cases}
\frac{4}{\gamma^4} \left( g_4 - 4g_1g_3 -g_2^2 +8g_1^2g_2-4g_1^4 \right), & \gamma \neq 0 \\
\frac{22}{45}\pi^4, & \gamma = 0
\end{cases}
\end{align}
and
\begin{align} \label{eq:var_est_asyvar_sb}
\sigma^2_{\sbl} =  
\begin{cases}
\frac{2}{3\gamma^3} \left( -3g_4I_{2,2} + 8g_1g_3I_{2,1}-6g_1^2g_2I_{1,1} \right), & \gamma > 0 \\
\frac{8}{\gamma^2} \left( \Gamma(-4\gamma)I_{2,2}- 2g_1\Gamma(-3\gamma)I_{2,1}+g_1^2\Gamma(-2\gamma)I_{1,1} \right), & \gamma < 0 \\
2\zeta(3) -48-\frac{8}{3}\pi^2 + \frac{32}{3}\log^3(2) - 48\log^2(2) +96\log(2) + \frac{16}{3} \pi^2 \log(2), & \gamma = 0
\end{cases},
\end{align}
where 
\begin{equation}\label{eq:var_est_asyvar_Iij}
I_{i,k} := \int_0^{1/2} \left(\alpha_{(j+k)\gamma}(w)-1 \right) \big \lbrace w^{-j\gamma-1}(1-w)^{-k\gamma-1} + w^{-k\gamma-1}(1-w)^{-j\gamma-1} \big \rbrace \on{d}\!w    
\end{equation}
 and 
\begin{equation*}
\alpha_\beta\colon (0,1) \to (0,\infty),  \quad w \mapsto \alpha_\beta(w) =
\begin{cases}
\frac{1-(1-w)^{\beta+1}}{w(\beta+1)}, & \beta \neq -1 \\
-\frac{log(1-w)}{w}, & \beta = -1
\end{cases}.
\end{equation*}
\end{lemma}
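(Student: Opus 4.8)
The plan is to specialize the abstract expressions \eqref{eq:as_vars} to the variance kernel $h_{\mathrm{Var}}(x,y)=(x-y)^2/2$ and to reduce every quantity to (co)moments of the standardized GEV law $G_\gamma$, which are rational functions of the numbers $g_j=\Gamma(1-j\gamma)$. First I would make $h_1$ explicit: writing $\mu_1=\Exp[Z]$ and $\tau^2=\Var(Z)$ for $Z\sim G_\gamma$, a one-line computation gives $h_1(z)=\Exp[h_{\mathrm{Var}}(z,Z)]-\vartheta=\tfrac12\{(z-\mu_1)^2-\tau^2\}$, which is centered. Hence the covariance in \eqref{eq:as_vars} is a plain joint expectation, and $\sigma^2_{\dbl}$ is a multiple of $\Var((Z-\mu_1)^2)$ while $\sigma^2_{\sbl}$ is a multiple of $\int_0^1\Cov((Z_{1,\xi}-\mu_1)^2,(Z_{2,\xi}-\mu_1)^2)\,\diff\xi$. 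For the disjoint formula I would use the representation $Z\stackrel{d}{=}(E^{-\gamma}-1)/\gamma$ with $E\sim\ExpDistr{1}$, so that $1+\gamma Z=E^{-\gamma}$ and $\Exp[(1+\gamma Z)^j]=\Exp[E^{-j\gamma}]=\Gamma(1-j\gamma)=g_j$ for $\gamma<1/j$; since $Z-\mu_1=(E^{-\gamma}-g_1)/\gamma$, expanding the central moments of $(E^{-\gamma}-g_1)/\gamma$ in terms of $g_1,\dots,g_4$ yields \eqref{eq:var_est_asyvar_db}. The Gumbel value $(\gamma=0)$ follows from a separate cumulant computation for the standard Gumbel law $(\kappa_2=\pi^2/6,\ \kappa_4=\pi^4/15)$.

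The sliding formula is the laborious part. The key simplification is that for $d=1$ and $\xi\in[0,1]$ the copula $C_\xi$ from \eqref{eq:cxi} has the closed form $C_\xi(u,v)=(uv)^\xi\min(u,v)^{1-\xi}$, obtained by inserting $L_\xi$ from \eqref{eq:lxi} with $L=\on{id}$ and $\max(-\log u,-\log v)=-\log\min(u,v)$. This measure splits into an absolutely continuous part with density $\xi\,\max(u,v)^{\xi-1}$ off the diagonal and a singular part of mass $(1-\xi)/(1+\xi)$ on $\{u=v\}$; probabilistically, $(-\log U_1,-\log U_2)$ is Marshall--Olkin bivariate exponential. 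Substituting $1+\gamma Z_{i,\xi}=(-\log U_i)^{-\gamma}$ expresses the integrand through powers of $(-\log U_i)^{-\gamma}$, the singular diagonal feeding a fourth-moment ($g_4$) term and the off-diagonal feeding the mixed ($g_1g_3$, $g_1^2g_2$) terms.

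Interchanging the $\xi$-integral with the copula integral by Fubini then produces the averages $\alpha_\beta(w)=w^{-1}\int_0^w(1-t)^\beta\,\diff t$, and the change of variables indicated in \eqref{eq:var_est_asyvar_Iij} collapses the two off-diagonal regions into the integrals $I_{j,k}$ over $(0,1/2)$. Collecting the three moment combinations $g_4I_{2,2}$, $g_1g_3I_{2,1}$ and $g_1^2g_2I_{1,1}$ gives \eqref{eq:var_est_asyvar_sb}. I would stress that the two displayed cases $\gamma>0$ and $\gamma<0$ are in fact the \emph{same} expression rewritten through the identity $g_j=\Gamma(1-j\gamma)=-j\gamma\,\Gamma(-j\gamma)$, so this is a matter of presentation rather than a genuine dichotomy.

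The main obstacle lies entirely in the sliding bookkeeping: isolating the singular diagonal contribution, justifying the interchange of integrations, and carrying out the $\xi$-integration that manufactures $\alpha_\beta$ and $I_{j,k}$. The genuinely delicate point is the Gumbel limit $\gamma=0$, where $\alpha_{(j+k)\gamma}(w)-1\to0$ while the accompanying powers $w^{-j\gamma-1}(1-w)^{-k\gamma-1}$ diverge, so the limit must be taken inside the integral with care; it is precisely this balance that produces the constants $2\zeta(3)$, $\pi^2$ and the powers of $\log 2$, the last arising from the endpoint $w=1/2$.
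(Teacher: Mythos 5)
Your plan is correct in substance, and for the disjoint part it is essentially the paper's own proof: the paper likewise computes $h_1(z)=z^2/2-\mu_1z+\mu_2/2$, writes $\sigma^2_{\dbl}=4\Var(h_1(Z))=\mu_4-\mu_2^2+4\mu_1(-\mu_1^3+2\mu_1\mu_2-\mu_3)$ and substitutes the GEV moments expressed through $g_j$, which is exactly your $\Var((Z-\mu_1)^2)$ evaluated via $1+\gamma Z\stackrel{d}{=}E^{-\gamma}$. One concrete caveat before you assert that this ``yields'' the display: $4\Var(h_1(Z))=\Var((Z-\mu_1)^2)$ expands to $\gamma^{-4}\{g_4-4g_1g_3-g_2^2+8g_1^2g_2-4g_1^4\}$, and your Gumbel cumulant computation gives $\kappa_4+2\kappa_2^2=11\pi^4/90$; both differ from the displayed prefactor $4/\gamma^4$ and from $22\pi^4/45$ by a factor of $4$, so you should reconcile that constant against \eqref{eq:as_vars} rather than take the display on faith.

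For the sliding part your route is genuinely different in execution though not in outcome. You integrate $h_1\otimes h_1$ directly against the explicit copula $C_\xi(u,v)=\min(u,v)\max(u,v)^{\xi}$, split into its absolutely continuous part (density $\xi\max(u,v)^{\xi-1}$) and the singular diagonal of mass $(1-\xi)/(1+\xi)$; the paper instead transforms to exponential margins $S_{i,\xi}=(1+\gamma Z_{i,\xi})^{-1/\gamma}$ and evaluates $C_{\xi,j,k}=\Cov(S_{1,\xi}^{-j\gamma},S_{2,\xi}^{-k\gamma})$ by Hoeffding's covariance identity, which buys you freedom from the a.c./singular decomposition since the joint c.d.f.\ encodes the diagonal mass automatically. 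Both routes produce the same $\xi$-integration generating $\alpha_\beta$ and, after symmetrization, the $I_{j,k}$; your observation that the $\gamma>0$ and $\gamma<0$ cases coincide under $g_j=-j\gamma\,\Gamma(-j\gamma)$ is correct (the paper's split only reflects the two forms of the identity for $\int_0^\infty(e^{-zu}-e^{-u})u^{-\beta-1}\,\on{d}\!u$). Two warnings, though. First, your attribution of the $g_4$-term to the diagonal and the mixed terms to the off-diagonal is not how the bookkeeping closes: the grouping in \eqref{eq:var_est_asyvar_sb} is by the exponent pair $(j,k)$, and every $I_{j,k}$ receives contributions from both regions. Second, and more seriously, obtaining the $\gamma=0$ case as a limit of the $\gamma\neq0$ formula is a genuine gap as stated: at $\gamma=0$ one has $\alpha_0(w)-1\equiv 0$, so the $I_{j,k}$ vanish while the prefactors $\gamma^{-3}$, $\gamma^{-2}$ blow up, and you would need a second-order expansion in $\gamma$ together with a domination argument to pass the limit through the double integral. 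The paper does not do this; it computes the Gumbel case from scratch ($S=e^{-Z}$, Hoeffding's identity for $\Cov(\log^jS_1,\log^kS_2)$, explicit antiderivatives involving $\on{Li}_2$ and $\on{Li}_3$), which is where $\zeta(3)$ and the powers of $\log 2$ actually arise. Either supply that expansion or follow the paper's direct computation for $\gamma=0$.
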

\begin{proof}
Recall the definition of $h_1$ from (\ref{eq:h_1}). We have $h_1(z) = z^2/2 -\mu_1z+\mu_2/2$ for the variance kernel $h(x,y) = (x-y)^2/2,$ where $\mu_j$ denotes the $j$-th moment of a $\GEV{\gamma}$ distribution. 

\smallskip
\noindent
\textbf{Disjoint case}: Using $\sigma^2_{\dbl} = 4\Var(h_1(Z))$ yields 
\begin{equation} \label{eq:proof_asyvars_sigma_db}
\sigma^2_{\dbl} = \mu_4-\mu_2^2 + 4\mu_1 \left( -\mu_1^3+2\mu_1\mu_2-\mu_3\right).    
\end{equation}
The first four moments of a $\GEV{\gamma}$ distributed random variable are given by
\begin{align*}
&\mu_1 = \frac{g_1-1}{\gamma}, \quad &&\mu_2 = \frac{g_2-2g_1+1}{\gamma^2}, \\
&\mu_3 = \frac{g_3-3g_2+3g_1+1}{\gamma^3}, \quad && \mu_4 = \frac{g_4-4g_3+6g_2-4g_1+1}{\gamma^4}.
\end{align*}
Plugging these into (\ref{eq:proof_asyvars_sigma_db}) gives the result for $\gamma \neq 0.$ The case $\gamma = 0$ is similarly easy and hence omitted.

\textbf{Sliding case}: Let $C_\xi = \Cov(h_1(Z_{1,\xi}),h_1(Z_{2,\xi})).$ First we will consider $\gamma \neq 0.$ A short calculation using the transformation $S_{i,\xi} = (1+\gamma Z_{i,\xi})^{-1/\gamma}$ gives $
C_\xi = 1/4\gamma^4\left(C_{\xi,2,2}-4g_1C_{\xi,2,1}+4g_1^2C_{\xi,1,1} \right),$
where $C_{\xi,j,k} := \Cov(S_{1,\xi}^{-j\gamma}, S_{2,\xi}^{-k\gamma})$ and hence 
\begin{equation}\label{eq:proof_asyvars_sigma_sb}
\sigma^2_{\sbl} = \frac{2}{\gamma^4} \int_0^1 (C_{\xi,2,2}-4g_1C_{\xi,2,1}+4g_1^2C_{\xi,1,1} \on{d}\!\xi.
\end{equation}
Hoeffdings Lemma will be employed to calculate $C_{\xi,j,k},$ thus we need to derive the difference of the joint and product c.d.f.s: To this end use the explicit form of $G_\xi$ for univariate random variables, see e.g. equation (13) in \cite{BucZan23},
\[
G_\xi(x,y) = \exp \left[ 
- \left\lbrace
\xi(1+\gamma x)^{-\frac{1}{\gamma}} + \xi (1+ \gamma y)^{-\frac{1}{\gamma}} + 
(1-\xi) (1+\gamma(x \wedge y)^{-\frac{1}{\gamma}}
\right\rbrace
\right]
\]
for $\xi \in [0,1]$ and $G_\xi(x,y) = G_\gamma(x) G_\gamma(y)$ for $\xi > 1$ to obtain 
\[
\Pm(S_{1, \xi} \le s, S_{2, \xi} \le t) = 1- e^{-s}-e^{-t}+e^{-(s+t)A_\xi(\frac{t}{t+s})}, \qquad s,t>0,
\]
where $A_\xi(w) = \xi + (1- \xi) \{ w \vee (1-w) \}.$ These lead to
\begin{align*}
&\Pm \left(S_{1,\xi}^{-j\gamma} \le s, S_{2,\xi}^{-k\gamma} \le t \right) - \Pm \left( S_{1,\xi}^{-j\gamma} \le s \right) \Pm \left( S_{2,\xi}^{-k\gamma} \le t \right) \\
=\,& \exp\left[ (s^{-\frac{1}{j\gamma}} + t^{-\frac{1}{k\gamma}}) A_\xi \left( \frac{t^{-\frac{1}{k\gamma}}}{s^{-\frac{1}{j\gamma}}+ t^{-\frac{1}{k\gamma}}} \right)\right] - \exp\left[ - \left( s^{-\frac{1}{j\gamma}}+ t^{-\frac{1}{k\gamma}} \right) \right],
\end{align*}
for $s,t > 0, j,k \in \lbrace 1, 2 \rbrace.$ Now by Hoeffding's Lemma:
\begin{align*}
&C_{\xi,j,k} \\ 
=\,&  \int_0^\infty \int_0^\infty \exp\left[ (s^{-\frac{1}{j\gamma}} + t^{-\frac{1}{k\gamma}}) A_\xi \left( \frac{t^{-\frac{1}{k\gamma}}}{s^{-\frac{1}{j\gamma}}+ t^{-\frac{1}{k\gamma}}} \right)\right] - \exp\left[ - \left( s^{-\frac{1}{j\gamma}}+ t^{-\frac{1}{k\gamma}} \right) \right] \on{d}\!s \on{d}\!t.
\end{align*}
Using the substitutions $u = s^{-\frac{1}{j\gamma}} + t^{-\frac{1}{k\gamma}}, w = t^{-\frac{1}{k\gamma}}/ (s^{-1/(j\gamma)}+ t^{-1/(k\gamma)})$ we get 
\begin{equation}\label{eq:proof_asyvars_Cxijk}
C_{\xi,j,k} 
= jk\gamma^2 \int_0^1 \int_0^\infty \left( \e^{-uA_\xi(w)} - \e^{-u}\right) u^{-(j+k)\gamma -1} (1-w)^{-j\gamma-1} w^{-k\gamma -1} \on{d}\!u \on{d}\!w    
\end{equation}
Distinguish cases, first let $\gamma < 0$. Use the fact that 
\[
\int_0^\infty \left( \e^{-zt}-\e^{-u} \right) u^{-\beta-1} \on{d}\!u 
= \Gamma(-\beta)\left(z^\beta-1\right), \qquad z >0, \beta < 0
\]
to obtain 
\begin{equation}\label{eq:proof_asyvars_Cxijk<0}
C_{\xi,j,k} = jk\gamma^2 \int_0^1\Gamma(-(j+k)\gamma) \left( A_\xi^{(j+k)\gamma}-1 \right) (1-w)^{-j\gamma -1}w^{-k\gamma -1} \on{d}\!w.
\end{equation}
Note $A_\xi(w) = A_\xi(1-w)$ for $w \in (0,1)$ and recall the definition of $\alpha_\beta(w).$ For $w \in (0,1)$ and $\beta > 0$ it then follows $\int_0^1(\xi w+1-w)^\beta \on{d}\!\xi = \alpha_\beta(w).$ This in conjunction with the symmetry of $A_\xi$ and (\ref{eq:proof_asyvars_Cxijk<0}) yields 
\begin{align}
\begin{split}    
&\int_0^1C_{\xi,j,k} \on{d}\!\xi \\
=  \, & jk\gamma^2 \Gamma(-(j+k)\gamma) \int_0^{1/2} \left(\alpha_{(j+k)\gamma}(w)-1 \right) \big \lbrace w^{-j\gamma-1}(1-w)^{-k\gamma-1} \\ 
& \hspace{4cm} + w^{-k\gamma-1}(1-w)^{-j\gamma-1} \big \rbrace \on{d}\!w.
\end{split}
\end{align}
Recall the definition of $I_{j,k}$ in (\ref{eq:var_est_asyvar_Iij}) and use (\ref{eq:proof_asyvars_sigma_sb}) to obtain 
\[
\sigma^2_{\sbl} = \frac{8}{\gamma^2}\left(\Gamma(-4\gamma)I_{2,2}-2g_1^2\Gamma(-3\gamma)I_{2,1}+\Gamma(-2\gamma)I_{1,1} \right), \qquad \gamma < 0.
\]
Consider now $\gamma > 0$ and note 
\[
\int_0^\infty \left( \e^{-zt}-\e^{-u} \right) u^{-\beta-1} \on{d}\!u 
= \frac{\Gamma(1-\beta)}{\beta} (1-z^\beta), \qquad \beta \in (0,1).
\]
to obtain via (\ref{eq:proof_asyvars_Cxijk})
\begin{align*}
C_{\xi,j,k} = 
& -\gamma\frac{jk  g_{j+k}}{j+k} \int_0^1\left(A_\xi^{(j+k)\gamma}-1 \right) w^{-j\gamma-1}(1-w)^{-k\gamma -1} \on{d}\!w \\
= \,& -\gamma\frac{jk  g_{j+k}}{j+k} I_{j,k}.
\end{align*}
Plugging this into (\ref{eq:proof_asyvars_sigma_sb}) yields the in (\ref{eq:var_est_asyvar_sb}) stated formula for $\gamma > 0.$

Let $\gamma = 0$ and use the transformation $S_{i,\xi} = \exp(-Z_{i,\xi})$ to obtain 
\begin{equation}\label{eq:proof_asyvars_Cxi_gumb}
  C_\xi :=  \Cov(h_1(Z_{1,\xi}), h_1(Z_{2,\xi}))
= \frac{1}{4}C_{\xi,2,2} + \gamma_E C_{\xi,2,1} + \gamma_E^2C_{\xi,1,1},  
\end{equation}
where $C_{\xi,j,k} := \Cov(\log^jS_{1,\xi}, \log^kS_{2,\xi}), j,k=1,2$ and $\gamma_E$ denotes the Euler–Mascheroni constant. Simple but tedious calculations to derive the differences needed in Hoeffdings Lemma result in 
\begin{align*}
&\Pm(\log S_{1,\xi} \leq s, \log S_{2,\xi} \leq t) - \Pm(\log S_{1,\xi} \leq s) \Pm(\log S_{2,\xi} \leq t) \\
= \,&\exp \left(-(\e^s+ \e^t)A_\xi \left(\frac{\e^t}{\e^t+\e^s}\right)\right) - \exp\left(-(\e^s+\e^t)\right), \qquad s,t \in \R; \\
& \Pm(\log^2 S_{1,\xi} \leq s, \log S_{2,\xi} \leq t) - \Pm(\log^2 S_{1,\xi} \leq s) \Pm(\log S_{2,\xi} \leq t) \\
= \, & \exp \left(-(\e^{\sqrt{s}} + \e^t) A_\xi\left( \frac{\e^t}{\e^t+\e^{\sqrt{s}}}\right) \right)  - \exp \left(-(\e^{-\sqrt{s}} + \e^t) A_\xi\left( \frac{\e^t}{\e^t+\e^{-\sqrt{s}}}\right) \right) \\
& \hspace{0.5cm} - \exp(-(\e^{\sqrt{s}} + \e^t)) + \exp(-(\e^{-\sqrt{s}} + \e^t)), \qquad s > 0, t \in \R; \\
& \Pm(\log^2 S_{1,\xi} \leq s, \log^2 S_{2,\xi} \leq t) - \Pm(\log^2 S_{1,\xi} \leq s) \Pm(\log^2 S_{2,\xi} \leq t) \\
= \,& \exp \left(-(\e^{\sqrt{s}} + \e^{\sqrt{t}}) A_\xi\left( \frac{\e^{\sqrt{t}}}{\e^{\sqrt{t}}+\e^{\sqrt{s}}}\right) \right)
 - \exp \left(-(\e^{\sqrt{s}} + \e^{\sqrt{t}})\right)\\
& \hspace{0.5cm} + \exp \left(-(\e^{-\sqrt{s}} + \e^{-\sqrt{t}}) A_\xi\left( \frac{\e^{-\sqrt{t}}}{\e^{-\sqrt{t}}+\e^{-\sqrt{s}}}\right) \right) - \exp \left(-(\e^{-\sqrt{s}} + \e^{-\sqrt{t}})\right)\\
& \hspace{0.5cm} -\exp \left(-(\e^{-\sqrt{s}} + \e^{\sqrt{t}}) A_\xi\left( \frac{\e^{\sqrt{t}}}{\e^{\sqrt{t}}+\e^{-\sqrt{s}}}\right) \right) + \exp \left(-(\e^{-\sqrt{s}} + \e^{\sqrt{t}})\right)\\
& \hspace{0.5cm} -\exp \left(-(\e^{\sqrt{s}} + \e^{-\sqrt{t}}) A_\xi\left( \frac{\e^{-\sqrt{t}}}{\e^{-\sqrt{t}}+\e^{\sqrt{s}}}\right) \right) + \exp \left(-(\e^{\sqrt{s}} + \e^{-\sqrt{t}})\right), \, s,t > 0.
\end{align*}
Hoeffding's Lemma, the upper displays and substitutions of the form $u = \e^{\pm \sqrt{s}} + \e^{\pm \sqrt{t}}, w = \e^{\pm \sqrt{t}} / (\e^{\pm \sqrt{s}} + \e^{\pm \sqrt{t}})$ yield
\begin{align} \label{eq:proof_asyvars_Cxijk_gumb}
\begin{split}
& C_{\xi, 1, 1} = \int_0^1 \int_0^\infty \left(\e^{-uA_\xi(w)} - \e^{-u} \right) \frac{1}{uw(1-w)} \on{d}\!u \on{d}\!w, \\
& C_{\xi, 2, 1} = 2 \int_0^1 \int_0^\infty \left(\e^{-uA_\xi(w)} - \e^{-u} \right) \frac{\log u + \log(1-w)}{uw(1-w)} \on{d}\!u \on{d}\!w,\\
& C_{\xi, 2, 2} = 4 \int_0^1 \int_0^\infty \left(\e^{-uA_\xi(w)} - \e^{-u} \right) \frac{(\log u + \log(1-w))(\log u + \log w)}{uw(1-w)} \on{d}\!u \on{d}\!w.
\end{split}
\end{align}
Invoke the following integral identities 
\begin{align*}
\begin{split}
&\int_0^\infty \left( \e^{-uz} - e^{-u} \right) \frac{1}{u} \on{d}\!u = -\log z, \\
& \int_0^\infty \left( \e^{-uz} - e^{-u} \right) \frac{\log u}{u} \on{d}\!u = \log z \frac{\log z+ 2\gamma_E}{2},\\
& \int_0^\infty \left( \e^{-uz} - e^{-u} \right) \frac{\log^2 u}{u} \on{d}\!u = 
-\log z \frac{\pi^2+6\gamma^2_E+2\log^2z+6\gamma_E\log z}{6}
\end{split}
\end{align*}
for $z > 0$ to obtain via (\ref{eq:proof_asyvars_Cxijk_gumb})
\begin{align}
\begin{split}
&\int_0^1 C_{\xi,1,1} \on{d}\!\xi = \int_0^1 \frac{1}{w(1-w)} \int_0^1 - \log(A_\xi(w)) \on{d}\!\xi \on{d}\!w,\\ 
&\int_0^1 C_{\xi,2,1} \on{d}\!\xi \\ 
=\,&  \int_0^1 \frac{1}{w(1-w)} \int_0^1 \log(A_\xi(w)) \big[ 2\gamma_E - 2\log(1-w) \\ 
& \hspace{0.5cm} +\log A_\xi(w) \big] \on{d}\!\xi  \on{d}\!w ,\\ 
&\int_0^1 \frac{C_{\xi,2,2}}{4} \on{d}\!\xi \\ 
= \,& \int_0^1 \frac{1}{w(1-w)} \int_0^1 -\gamma^2_E \log A_\xi(w) + \gamma_E \log A_\xi(w) \big[ -\log A_\xi(w) \\
& \hspace{0.5cm} +\log(1-w) +\log w \big] \\ 
& \hspace{0.5cm}  +\log A_\xi(w) \Big[ -\frac{\pi^2+2\log^2 A_\xi(w)}{6} -  \log w \log(1-w)\\
& \hspace{1cm} + \log A_\xi(w) \frac{\log(1-w) + \log w}{2} \Big] \on{d}\!\xi  \on{d}\!w .
\end{split}
\end{align}
Now plug these formulas into (\ref{eq:proof_asyvars_Cxi_gumb}), simplify and integrate to get 
\begin{align*}
&\int_0^1 C_\xi \on{d}\!\xi = 
\int_0^{1/2} \frac{1}{w(1-w)} \int_0^1 \log A_\xi(w) \Big[ -\frac{\pi^2+2\log^2A_\xi(w)}{3}  \\
& \hspace{0.5cm} +\log A_\xi(w) (\log(1-w)+\log w) - 2\log w \log(1-w) \Big] \on{d}\!\xi \on{d}\!w \\
= \, & \int_0^{1/2} \frac{1}{3w^2(1-w)} \Big[ (w-1)\log^3(1-w)-3(w-1)\log^2(1-w)\log w \\
& \hspace{0.5cm} +w(12+\pi^2+6\log w) \\ 
& \hspace{0.5cm} + \log(1-w) (12+\pi^2-w(6+\pi^2)+6\log w) \Big] \on{d}\!w \\
=: \, &\int_0^{1/2} c_\xi \on{d}\!\xi.
\end{align*}
The integrand $c_\xi$ has the antiderivative 
\begin{align*}
B(w) =\, & - \frac{1}{3w} \Big[ 6w\on{Li}_2(w) +6w\on{Li}_3(1-w)(\log(1-w)-1) \\
& \hspace{0.5cm} + w\log^3(1-w) - \log^3(1-w)-3w\log w \log^2(1-w) \\ 
& \hspace{0.5cm} +3\log^2(1-w) \log w +3\log^2(1-w)-6w\log(1-w) \\
& \hspace{0.5cm} +6w \log(1-w)\log w + 6 \log w \log(1-w) \\ 
& \hspace{0.5cm} +\pi^2 \log(1-w) + 18 \log(1-w) + 6w\log w
\Big], \quad w \in (0,1/2),
\end{align*}
where $\on{Li}_j(w) := \sum_{k=1}^\infty w^k/k^j$ denotes the Polylogarithm function for $w \in (0,1)$ and $j \in \N.$ 
Take the limits to get 
\begin{align*}
\lim_{w \downarrow 0} F(w) =\, & 6- 2\zeta(3),  \\ 
\lim_{w \uparrow 0} F(w) =\, &-\frac{7}{4}\zeta(3)-\frac{\pi^2}{3} + \frac{4}{3}\log^3(2) - 6\log^2(2)+12\log2 + \frac{2}{3} \pi^2 \log 2,
\end{align*}
which imply the formula in (\ref{eq:var_est_asyvar_sb}) for $\gamma = 0$.
\end{proof}
\end{document}